\documentclass[12pt]{article}

\usepackage{authblk}
\usepackage{fleqn,amsfonts,amssymb,graphicx,theorem}
\usepackage{caption}
\usepackage{mathtools}
\usepackage{extarrows}
\usepackage{adjustbox}

\usepackage{ xcolor}

\newcommand{\errata}[1]{}

\newcommand{\dd}{\mathop{}\!{d}}

\newcommand{\restrict[1]}{\raisebox{-.5ex}{$|$}_{#1}}

\newcommand{\Z}{\mathbb Z}
\newcommand{\N}{\mathbb N}
\newcommand{\C}{\mathbb C}

\newcommand{\R}{\mathbb R}
\newcommand{\Q}{\mathbb Q}

\newcommand{\mat}[2][ccccccccc]{\left( \begin{array}{#1} #2\\ \end{array} \right)}

\newcommand{\set}[1]{\left\{#1\right\}}
\newcommand{\HP}{\mathbb H}

\newcommand{\vsni}{\vskip 0.2cm}

\newenvironment{proof}{Proof.:}{\hfill$\square$}
\newtheorem{theorem}{Theorem}
[section]

\newtheorem {lemma}[theorem]{Lemma}
\newtheorem {proposition}[theorem]{Proposition}
\newtheorem {corollary}[theorem]{Corollary}
\theorembodyfont{\normalfont}
\newtheorem {definition}[theorem]{Definition}

\newtheorem {remark}[theorem]{Remark}

\def\be{\begin{equation}}
\def\ee{\end{equation}}

\newcommand{\beqn}{\begin{eqnarray}}
\newcommand{\eeqn}{\end{eqnarray}}
\newcommand{\beqno}{\begin{eqnarray*}}
\newcommand{\eeqno}{\end{eqnarray*}}

\renewcommand {\l}{\left}
\newcommand {\ri}{\right}
\renewcommand  {\Re}{{\rm Re}}

\newcommand {\bG}{{\mathbb G}}

\newcommand{\bem}{\l(\! \begin{array}}
\newcommand{\eem}{\end{array}\!\ri)}
\newcommand{\bsm}{\left(\begin{smallmatrix}} % begin small matrix
\newcommand{\esm}{\end{smallmatrix}\right)}  % end   small matrix

\newcommand{\northeast}
{\raisebox{1.5mm}{$\lrcorner$}\raisebox{-2mm}{$\ulcorner$}}

\begin{document}

\title{Words and numbers: a dynamical systems perspective}

\author{Stefano Isola and Francesco Marchionni}

\affil {School of Science and Technology, University of Camerino,

via Madonna delle Carceri, 62032 Camerino, Italy,

e-mail: stefano.isola@unicam.it, francesco.marchionni@studenti.unicam.it}

%\address[Stefano Isola]{School of Science and Technology\\
%University�of Camerino\\
%via Madonna delle Carceri, 62032 Camerino}
%\email[Stefano Isola]{stefano.isola@unicam.it}

\date{}

\maketitle

\begin{abstract}
\noindent
Along with some known and less known results, we discuss new insights relating combinatorics of words and the ordering of the rationals from a dynamical systems point of view, somehow continuing along the path started in \cite{BI}.
We obtain in particular a set of results that structure and enrich the correspondence between the Stern-Brocot (SB) ordering of rational numbers and the corresponding ordering of Farey-Christoffel (FC) words, a class of words that, since their appearance in literature
at the end of the 18th century, have revealed numerous relationships with other fields of mathematics. 
Among the results obtained here is the construction of substitution rules that act on the FC words in a parallel way to the maps on the positive reals that generate the permuted SB tree both vertically and horizontally. We further show that these rules naturally induce a map of the space of (infinite) Sturmian sequences into itself. Finally, a complete correspondence is obtained between the vertical and horizontal motions on the SB tree and the geodesic motions along scattering geodesics and the horocyclic motion along Ford circles in the upper half-plane, respectively.
\end{abstract}

\newpage
\section{Preliminaries}
\noindent
The Stern-Brocot (SB) tree $\cal T$ is binary rooted tree which provides a way to order (and thus to count) the elements of $\Q_+$, the set of positive rational numbers, so that every number appears (and thus is counted) exactly once (see \cite{St}, \cite{Br}, \cite{GKP}, \cite{BI}). To begin with, we say that a pair of nonnegative fractions 
$\frac a b<\frac {c} {d}$ is a {\sl Farey pair} if the unimodular relation $bc-ad=1$ holds (so that their distance is $1/bd$). The basic operation needed to construct $\cal T$ associates to each Farey pair their
{\sl mediant} $${\frac ab}\oplus {c\over d} = {a+c \over b+d}$$
One readily sees that the {\sl child} ${\frac ab}\oplus {c\over d}$ always lies somewhere in between its {\sl parents}
 $\frac ab$ and $\frac cd$, forming Farey pairs with them.  Moreover, among all the fractions lying strictly between ${a\over b}$ and  ${c\over d}$  it is the one (and only one) with the smallest denominator, and is always in lowest terms whenever the  parents do (see \cite{Ri}). 
 
 \vskip 0.2cm
\begin{remark}
Note that the mediant operation arises naturally in the following way:  let  $L$ be the vertical half-line $\{x=1, y\geq 0\}$ in $\R^2$, and denote by $U$ the subspace of $\R^2$ given by of all vectors $u=(q,p)$ with positive integer coordinates. Let $T:U\to \Q_+$ be the map given by $T(q,p)=p/q$, that is the ordinate of the intersection of $u$ with $L$. Each reduced fraction on $L$ is thus the image with $T$ of a vector of $U$ with coprime coordinates. Finally, given $u_1,u_2 \in U$, we have
$$
T(u_1+u_2)=T(u_1)\oplus T(u_2)
$$
\end{remark}
 \vskip 0.2cm
 \noindent
Now, taking as initial pair  $\frac 0 1$ and $\frac 1 0$, we take their mediant  $\frac 1 1$ as the {\sl root}  of the tree. Then one writes  one generation after the other using the above operation (a portion of this structure is depicted in Figure \ref{fig:The Stern-Brocot tree}). As already observed, $\Q_+$ and ${\cal T}$ are in bijection. To a given $x\in \Q_+$, we associate its {\sl depth}, as the level of ${\cal T}$ it belongs to.
\begin{lemma}\label{uno} {\rm (\cite{BI}, Lemma 1.2)} Let $x\in \Q_+$ then
$$
\qquad x=[a_0;a_1,\dots ,a_n] \quad \Longrightarrow \quad {\rm depth}(x)= \sum_{i=0}^n a_i
$$
\end{lemma}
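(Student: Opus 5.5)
The plan is to prove the formula by induction on the depth, using the self-similarity of $\cal T$ under two simple maps of $\Q_+$. Set
$$
R(x)=x+1,\qquad L(x)=\frac{x}{1+x}.
$$
I will show three things: the right subtree of the root $\frac11$ is the image of $\cal T$ under $R$; the left subtree is the image of $\cal T$ under $L$; and both maps increase the depth by exactly one. Since $\frac11=[1]$ has depth $1$, the root is the base case.

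\textbf{Step 1: self-similarity.} In the vector language of the Remark, $R$ and $L$ come from the unimodular linear maps
$$
(q,p)\mapsto(q,p+q),\qquad (q,p)\mapsto(q+p,p)
$$
on $U$. Linear maps respect sums of vectors, so by $T(u_1+u_2)=T(u_1)\oplus T(u_2)$ they commute with the mediant. Being of determinant $1$, they also send Farey pairs to Farey pairs and preserve the order on $\Q_+$.

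The initial pair $(\frac01,\frac10)$ is sent by $R$ to $(\frac11,\frac10)$, which is exactly the pair generating the right subtree of the root. It is sent by $L$ to $(\frac01,\frac11)$, which generates the left subtree. Hence, by induction on the generation, $R$ maps the node of $\cal T$ at depth $k$ produced by a given sequence of mediant operations to the node at depth $k+1$ in the right subtree produced by the same sequence. The analogous statement holds for $L$ and the left subtree. In particular ${\rm depth}(R(x))={\rm depth}(L(x))={\rm depth}(x)+1$. Moreover every $y\neq 1$ equals $R(y-1)$ if $y>1$, or $L\bigl(\frac{y}{1-y}\bigr)$ if $y<1$. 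I expect this step to be the main obstacle, because it needs a careful statement of "same position in the tree" when the construction is done generation by generation.

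\textbf{Step 2: effect on continued fractions.} Write $x=[a_0;a_1,\dots,a_n]$.
\begin{itemize}
\item $R(x)=[a_0+1;a_1,\dots,a_n]$, so the sum of the partial quotients increases by $1$.
\item If $a_0\ge1$, then $1/x=[0;a_0,\dots,a_n]$, so $1+1/x=[1;a_0,\dots,a_n]$ and $L(x)=1/(1+1/x)=[0;1,a_0,\dots,a_n]$. The sum again increases by $1$.
\item If $a_0=0$, then $1/x=[a_1;a_2,\dots,a_n]$, so $L(x)=[0;a_1+1,a_2,\dots,a_n]$, and once more the sum increases by $1$.
\end{itemize}
The ambiguity $[\dots,a_n]=[\dots,a_n-1,1]$ leaves the sum unchanged, so the statement does not depend on which expansion is chosen.

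\textbf{Step 3: conclusion.} Suppose the formula holds at depth $k$, and let $y$ be a node at depth $k+1\ge2$. By Step 1, $y=R(x)$ or $y=L(x)$ for some $x$ at depth $k$. By Step 2, the sum of the partial quotients of $y$ is that of $x$ plus one, which by the inductive hypothesis equals $k+1={\rm depth}(y)$. This completes the induction.
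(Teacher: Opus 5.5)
Your proof is correct. The paper gives no argument of its own for this lemma; it is quoted from \cite{BI}. So there is no internal proof to follow, but your two steps are the paper's own tools read in reverse. For $L$, Step 1 is Remark \ref{subtree}, namely $\phi(\mathcal T)=\mathcal S$ with $\phi=L$, and your $x\mapsto x+1$ is its analogue for the right subtree. Step 2 is the inverse of the action \eqref{faction} of the map $F$ of \eqref{effe}, which undoes both of your maps and sends each node to its antecedent. In effect you show that $F$ lowers both the depth and $\sum_i a_i$ by one while deleting the first letter of the path $\sigma(x)$, which is the mechanism behind the rule \eqref{F}.

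This top-down induction needs only the trivial expansions of $x+1$ and $x/(1+x)$. The bottom-up alternative, that the parent of $[a_0;\dots,a_n]$ (with $a_n\ge 2$) at the previous level is $[a_0;\dots,a_n-1]$, needs convergent bookkeeping. Reading the depth off the word $B^{a_0}A^{a_1}\cdots$ of Proposition \ref{codaggio} would be circular, since the paper derives that proposition from this very lemma.

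The point you flag in Step 1 is settled cleanly by labelling each node with its ordered pair of parents. The root carries $(\frac01,\frac10)$, and a node labelled $(\alpha,\beta)$ has children labelled $(\alpha,\alpha\oplus\beta)$ and $(\alpha\oplus\beta,\beta)$. Your two maps are increasing, unimodular and additive on vectors. Hence the node with path $\sigma$ is sent by $x\mapsto x+1$ to the node with path $1\sigma$, and by $x\mapsto x/(1+x)$ to the node with path $0\sigma$; both image nodes have depth one larger. As a minor notational point, the paper already uses $R$ for the map \eqref{zagier}.
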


\begin{figure}[htbp]
\begin{center}
\includegraphics[width=\textwidth, trim = 4cm 17.3cm 4cm 4.4cm , clip]{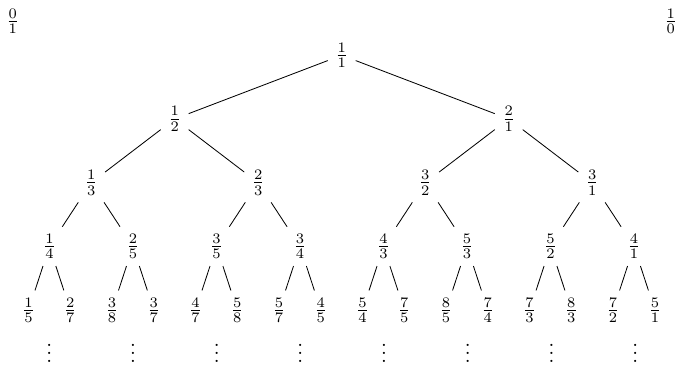}
\caption{The first five levels of the Stern-Brocot tree }
\label{fig:The Stern-Brocot tree}
\end{center}
\end{figure}

\begin{remark}\label{subtree}
Note that the sub-tree $\cal S$ of $\cal T$ having $\frac 1 2$ as root node and vertex set $\Q_+\cap [0,1]$ (sometimes called Farey tree) can be obtained exactly in the same way as $\cal T$ taking as initial pair $\frac 0 1$ and $\frac 1 1$ instead of $\frac 0 1$ and $\frac 1 0$. One easily sees (\cite{BI}, Lemma 1.1) that $\phi ({\cal T}) = \cal S$ where $\phi :  [0,\infty) \to [0,1]$ is the
invertible map defined by $\phi (\infty)=1$ and
$\phi (x) = \frac x {x+1}$.
\end{remark}

\vsni
\noindent
One can also construct an equivalent tree whose  vertex set is formed by binary strings, each fraction $p/q \in \cal T$  corresponding to a binary word $w_{\frac pq}$ obtained by concatenation of its left and right parent as follows\footnote{Defining FC words by reversed concatenation does not really change matters. In particular, it is easy to show by induction that FC words defined as above (resp. by reversed concatenation) are also {\sl Lyndon words}, i.e. they are minimal (resp. maximal) w.r.t. cyclic permutations. We should also notice that what we call here Farey-Christoffel words, to emphasize their relation with the Farey order of the rationals, are commonly called just {\sl Christoffel words}  \cite{BLRS} since they have been studied for the first time by Christoffel in 1875, see \cite{Ch}.
}.
\begin{definition}\label{fw} {\sl(Farey-Christoffel (FC) words)} Set 
$$
w_{\frac 01}=0\quad \hbox{and} \quad w_{\frac 10}=1
$$
If moreover $\frac {p'} {q'}$ and $\frac {p''} {q''}$ is a Farey pair and $\frac {p}{q}={\frac {p'} {q'}}\oplus {p''\over q''}$, we define
$$
w_{\frac {p}{q}} = w_{\frac {p'}{q'}} \, w_{\frac {p''}{q''}} 
$$
\end{definition}
\noindent
{\bf Some notations:}  for $s\in \{0,1\}$ set ${\hat s} =1-s$. Then, for $w\in \{0,1\}^*$ given by $w=s_1\dots s_n$ we set
$${\hat w} ={\hat s}_1 \dots {\hat s}_n\quad \hbox{and} \quad {\tilde w}=s_n\dots s_1$$ Also denote by $|w|$ the length of $w$ and by $|w|_s$ the number of occurrence of the symbol $s\in \{0,1\}$ in $w$. 
\vsni
\noindent
The above construction establishes a one to one correspondence between $\Q_+ \simeq {\cal T}$ and the set $\cal F$ of FC words.
\begin{theorem}\label{propo}
We have the following properties:

\begin{enumerate}
\item given $w \in \cal F$, we have $w=w_{\frac pq}$ with $\frac pq = \frac {|w|_1}{|w|_0}$ (so that $|w|=p+q$) ;
\item given  $\frac p q \in {\cal T}$ with $p+q>1$ we have $w_{\frac pq}=0 \,c \, 1$ for some $c\in \{0,1\}^*$ satisfying $c={\tilde c}\,$;
\item given  $w_{\frac pq}=0 \,c\, 1$, we have $w_{\frac qp} = 0\,{\hat c}\,1\,$;
\item given  $w \in \cal F$ with $|w|>1$, it can be uniquely factorized as  $w=u\, v$, where $u$ and $v$  are non-empty palindrome words. Moreover if $w=w_{\frac {p}{q}} = w_{\frac {p'}{q'}} \, w_{\frac {p''}{q''}}$, then $|u|=p''+q''$ and $|v|=p'+q'$.
\end{enumerate}
\end{theorem}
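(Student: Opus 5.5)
All four items will be proved by induction on the depth in $\cal T$, i.e. along the mediant construction of Definition \ref{fw}. The key structural fact is that every Farey pair occurring in $\cal T$ arises in the course of the construction, so each node $\frac pq$ of depth $n$ has one parent of depth $n-1$. The other parent is an earlier ancestor, or one of $\frac01,\frac10$.

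\emph{Item 1.} This is immediate by induction. Concatenation adds the letter counts, so $|w_{p/q}|_1=p'+p''=p$ and $|w_{p/q}|_0=q'+q''=q$. The base cases $w_{0/1}=0$ and $w_{1/0}=1$ are clear.

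\emph{Items 2 and 3.} I would prove these simultaneously with a stronger auxiliary statement. For a Farey pair $\frac{p'}{q'}<\frac{p''}{q''}$ with mediant $\frac pq$, write $w'=w_{p'/q'}$ and $w''=w_{p''/q''}$. The claim is
$$
w'w''=0c1,\qquad w''w'=1c0,\qquad c=\tilde c .
$$
That is, the two concatenations differ only in their first and last letters, and the common middle is a palindrome. The base case is $w'=0$, $w''=1$ with $c$ empty.

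For the inductive step, take the new Farey pairs $(\frac{p'}{q'},\frac pq)$ and $(\frac pq,\frac{p''}{q''})$. Their concatenations are $w'w'w''$ and $w'w''w''$, and the reversed ones are $w'w''w'$ and $w''w'w''$. Write $w'=0a$ when $w'\neq 0$; by the induction hypothesis applied to the pair generating $w'$, $a$ has the form $x1$ with $x$ palindromic. The cleanest route is the known characterisation that $w$ is a Christoffel word iff $w=0c1$ with $c$ a central palindrome, i.e. $c$ has two periods $|w'|$ and $|w''|$ coprime with $|c|=|w'|+|w''|-2$ (Fine–Wilf extremal). I would instead carry the identity $w'w''=0c1$, $w''w'=1c0$ inductively. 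Then $c$ equals both $w''$-stripped-of-its-last-letter preceded by $w'$ stripped of its first letter, and the symmetric expression, from which $\tilde c=c$ follows. The step is a direct word check:
$$
w'(w'w'')=w'0c1,\qquad (w'w'')w'=0c1w',
$$
and one verifies these differ only at the ends using $w'=0\cdots$ together with the induction hypothesis on $w'$'s own parents. I expect this bookkeeping to be the main obstacle. I would try first to reduce it to the palindrome closure $c=(\text{palindromic closure})$ formula.

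Item 3 then follows from the symmetry $x\mapsto 1/x$ of $\cal T$, which swaps left and right parents. The map $s\mapsto\hat s$ combined with reversal sends $w_{p/q}$ to $w_{q/p}$. Indeed, by induction $w_{q/p}=\widetilde{\widehat{w_{p/q}}}$, since reversal swaps the order of concatenation, matching the swap of parents. Applying this to $0c1$ gives $0\,\tilde{\hat c}\,1=0\hat c1$ by item 2.

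\emph{Item 4.} For existence, note that $w''w'=1c0$ and $c$ is a palindrome. Writing $w''=1u_0$, we get $c=u_0\,(\text{rest})$, and the palindromicity of $c$ together with $w'w''=0c1$ yields that $u=w''$ shifted appropriately is a palindrome. More concretely, I would show by the same induction that $w'=0\alpha 1$ and $w''=0\beta1$, with $w'w''=(0\alpha' )(\beta'1)$ giving palindromes of lengths $|w''|$ and $|w'|$. Here $u$ is the prefix of length $p''+q''$ and $v$ the suffix of length $p'+q'$; palindromicity follows since $c=\tilde c$ and $c$ also has the form $\pi_1 10\pi_2$ with $\pi_1,\pi_2$ palindromes (standard factorisation of central words). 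For uniqueness, suppose $w=uv=u'v'$ with all four palindromes. Then $w$ would equal a conjugate of $\tilde w$ in two different ways, giving a nontrivial period of the primitive word $w$. This is impossible since $w$ is primitive, as $\gcd(p,q)=1$.
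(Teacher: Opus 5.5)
Your argument for item 2, on which items 3 and 4 both rest, has a genuine gap. The invariant you propose to carry through the induction, $w'w''=0c1$ together with $w''w'=1c0$, is false for every Farey pair except the initial one $(\frac01,\frac10)$. Indeed, $w''w'=1c0$ would force $w''$ to begin with $1$ and $w'$ to end with $0$, i.e. $w''=1$ and $w'=0$.

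Two concrete failures:
\begin{itemize}
\item For $\frac12=\frac01\oplus\frac11$ one has $w'w''=001$, so $c=0$, but $w''w'=010\neq 100$.
\item For $\frac35=\frac12\oplus\frac23$ one has $w''w'=00101001$, while $1c0=10100100$.
\end{itemize}
What is actually true is that $w''w'$ arises from $w'w''$ by exchanging one adjacent factor $01$ for $10$, and that $1c0=\widetilde{w''}\,\widetilde{w'}$. The latter is only a restatement of $c=\tilde c$, so it cannot serve as an independent invariant. Hence the ``direct word check'' you postpone cannot be carried out.

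Your fallbacks do not close the gap either. Appealing to the central-palindrome characterisation of Christoffel words amounts to citing the statement itself. The palindromic-closure description (Corollary \ref{bellu}, Proposition \ref{ricodaggio}) is derived in this paper from the very theorem you are proving, so using it would be circular. The same confusion appears in item 4, where you write $w''=1u_0$, although $w''$ begins with $0$ whenever $|w''|\ge 2$.

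The missing idea, which the paper uses, is to induct only on the statement that central words are palindromes, but to exploit two expressions for the same central word. Suppose $w_\gamma=w_\alpha w_\beta$ with $w_\alpha=0a1$, $w_\beta=0b1$, and $a,b,c$ palindromes. Concatenation gives $c=a10b$, while reversal gives $c=\tilde c=b01a$. Then:
\begin{itemize}
\item the right child $w_\gamma w_\beta$ has central word $c10b=b01a10b$;
\item the left child $w_\alpha w_\gamma$ has central word $a10c=a10b01a$.
\end{itemize}
Both are visibly palindromes; the cases where a parent is $\frac01$ or $\frac10$ are checked by hand. The same identity gives the existence part of item 4 directly: $w=0c1=(0b0)(1a1)$, with $|0b0|=|w''|$ and $|1a1|=|w'|$. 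This is what your ``more concretely'' sketch amounts to.

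With item 2 in place, the rest of your proposal is sound. Item 1 is as in the paper. Your uniqueness argument is essentially the paper's: two palindromic factorizations make $w$ equal to a nontrivial rotation of itself, contradicting primitivity, which follows from $\gcd(p,q)=1$. Your proof of item 3 through the inductive identity $w_{q/p}=\widetilde{\widehat{w_{p/q}}}$ works because inversion swaps the two parents and reversal swaps the order of concatenation. It is a clean explicit version of what the paper only asserts ``easily follows'' from item 2.
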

\begin{proof} The first assertion follows from the definition, whereas  the third easily follows from the second. Let us then prove 2. We proceed by induction on the depth.  For the root node $\frac 1 1$ we get $c=\epsilon$, the empty word, so that the assertion is trivial. Suppose it is true up to depth $n> 1$, and consider 
$\gamma\in \cal T$ with depth$(\gamma)=n$. We have $w_{\gamma}=0 \,c\, 1$ with $c=\tilde c$. On the other hand $\gamma$ is obtained as the child of a left and right parent, say $\alpha$ and $\beta$, one of depth $n-1$ and the other of depth $n-k$, for some $k=2, \dots , n$ (the case in which one parent is an ancestor is left to the reader). Set $w_\alpha=0\, a\, 1$ and $w_\beta=0\, b\, 1$, with $a=\tilde a$ and $b=\tilde b$. Therefore $c=a\, 1\, 0 \, b = {\tilde b}\, 0\, 1\, \tilde a$.
Now consider a child $\delta$ of $\gamma$. If $\delta$ is the right child then by construction $w_\delta = 0\, c\, 1\, 0\, b\, 1=0\, a\, 1\,0\, b\, 1\, 0\, b\, 1=0\, d\, 1$ with $d=a\, 1\,0\, b\, 1\, 0\, b={\tilde b}\, 0\, 1\, {\tilde a}\, 1\, 0\, b$, which is clearly palindromic. If $\delta$ is the left child, the same argument yields $w_\delta = 0\, d'\, 1$ with $d'=a\, 1\, 0\, {\tilde b}\, 0\, 1\, {\tilde a}$. 

\noindent
To show the last statement, we note that from the above it follows that for $w=0\, c\,1 \in \cal F$, the palindrome $c$ has always the structure $c=a\, 1\, 0 \, b = {\tilde b}\, 0\, 1\, \tilde a$, with $a=\tilde a$ and $b=\tilde b$. Therefore we can write $w=u\, v$ with $u=0\,{\tilde b}\, 0$ and $v=1\, \tilde a\, 1$, which are both palindrome words. As for the uniqueness, let $w = uv = ts$ with $u,v,t,s$ all palindromes. Assume without loss that $|u| > |t|$, so $u = th$ and $hv = s$, with $h\neq \epsilon$. Since they are all palindromes, we have $vu = st$, so that $vth = hvt$. Then it readily follows that $w = h^k$ for some positive $k \in \N$. But this is absurd, since it should be $|w|_0 =k|h|_0$ and $|w|_1 =k|h|_1$, but we already know that $|w|_0 =p$ and $|w|_1 =q$ with $p$ and $q$ coprime, and the case $k=1$ would imply $w=u=s=h$, absurd since $|w|>1$ and it couldn't be palindromic. This holds true for each $w \in \cal F$, except for the leftmost and rightmost nodes at each level, for which the uniqueness of the factorization is trivial since $w = 0...01$ or $w = 01...1$.
\end{proof}

\begin{remark}\label{lessico}
The last statement of the above theorem yields two factorizations for $w \in \cal F$ with $|w|>1$: the {\sl palindromic factorization} $w=u\, v$, with $u$ and $v$ both palindromes, and the so called {\sl standard factorization}  $w=w_{\frac {p}{q}} = w_{\frac {p'}{q'}} \, w_{\frac {p''}{q''}}$, in terms of FC sub-words. 
Both of them are unique. 
\end{remark}

\noindent
\begin{remark} It follows from the definition that given a word with standard factorization $w = uv$, with $w_{\frac{p'}{q'}} = u$ and $w_{\frac{p''}{q''}} = v$, then $u(uv)$ and $(uv)v$ are FC words; in particular they are the children of $w$ with the indicated standard factorization. Moreover, if
$|w| \geq  3$, then either $u$ is a proper prefix of $v$, and $v = uv'$ is the standard factorization of $v$, or $v$ is a proper suffix of $u$, in which case $u = u'v$.
\end{remark}

\noindent
Some rather immediate consequences of the above properties are formulated in the following corollaries (see also \cite{BLRS}). 
\begin{corollary}\label{bellu} Let $w=0\, c\, 1$ be a FC word associated to some element of $\cal T$. The FC words associated to its left and right children are given by
$$
0\, (0c)^-\, 1=0\, (c\, 0)^+\, 1\quad \hbox{and} \quad 0\, (1c)^-\, 1=0\, (c1)^+\, 1
$$ 
where $u^-$ and $u^+$ are the shortest palindrome with suffix, respectively prefix, given by $u$.\end{corollary}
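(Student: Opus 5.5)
The plan is to build on the structure obtained in the proof of Theorem \ref{propo}. Write $w=w_{\frac pq}=w_\alpha w_\beta$ (standard factorization) with $w_\alpha=0\,a\,1$, $w_\beta=0\,b\,1$, where $a=\tilde a$ and $b=\tilde b$. Then $c=a\,1\,0\,b=b\,0\,1\,a$.

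By Definition \ref{fw}, the children are $w_\alpha w=0\,d'\,1$ and $w\,w_\beta=0\,d\,1$, with
$$d=c\,1\,0\,b,\qquad d'=a\,1\,0\,c .$$
The proof of Theorem \ref{propo} already shows that $d$ and $d'$ are palindromes. So the task is to identify $d$ with $(1c)^-$ and $d'$ with $(0c)^-$; the equalities with $(c1)^+$ and $(c0)^+$ will then be free.

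\textbf{Reduction to the right child and the ``$+$'' forms.} The second form of each identity comes from reversal. The reversal of a palindrome with suffix $u$ is a palindrome with prefix $\tilde u$. Since $c=\tilde c$, we get $\widetilde{1c}=c1$, so $(1c)^-=(c1)^+$, and likewise $(0c)^-=(c0)^+$.

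For the left child I would use item 3 of Theorem \ref{propo}. Complementation $\hat{\ }$ swaps $\frac pq\leftrightarrow\frac qp$ and exchanges left and right children. It also commutes with taking $(\cdot)^-$. So it suffices to treat the right child, i.e. to prove $d=(1c)^-$.

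\textbf{The right child.} I proceed in two steps.
\begin{enumerate}
\item Since $d$ is a palindrome, $d=\tilde d=b\,0\,1\,\tilde c=b\,0\,(1c)$. Hence $1c$ is a suffix of the palindrome $d$, and $|d|=|1c|+|b|+1$.
\item For minimality, recall the standard description: the shortest palindrome with suffix $u$ is $\tilde r\,u$, where $u=P\,r$ and $P$ is the longest palindromic prefix of $u$. Here $1c=(1\,a\,1)(0\,b)$ and $1a1$ is a palindrome. So it remains to show that $1c$ has no palindromic prefix longer than $|a|+2$.
\end{enumerate}

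\textbf{Main obstacle: excluding a longer palindromic prefix.} Suppose $1c$ had a palindromic prefix of length $m>|a|+2$. It would have the form $1\,Q\,1$ with $Q$ a palindromic prefix of $c$ of length $m-2$. Since $c$ is itself a palindrome, a palindromic prefix of length $m-2$ forces $c$ to have period $r=|c|-m+2$. Note that $r<|c|-|a|=|b|+2$, so in fact $r\le |b|+1$.

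On the other hand, $c=a10b$ with $a$ a palindromic prefix, so by the same reasoning $c$ also has period $|c|-|a|=|b|+2$. Likewise the factorization $c=b01a$ gives period $|a|+2$.

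Now apply Fine--Wilf to the periods $r$ and $|a|+2$. Since $r+|a|+2-\gcd\le |b|+|a|+3-1=|c|+1$, one has to check the borderline case with care. In the generic case, $c$ gets period $g=\gcd(r,|a|+2)$. Feeding this back into $w=0c1$ should force $w$ to be a power of a shorter word, or contradict $|w|_0,|w|_1$ coprime, as in the uniqueness argument of Theorem \ref{propo}.

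I expect the delicate points to be the Fine--Wilf borderline $r+|a|+2-g=|c|+1$ and the check that the extra letter $1$ after $Q$ is consistent. If this route gets messy, the fallback is induction on depth. One would track how $(1c)^-$ changes from parent to child via the recursive forms $d=a10b10b$ and $d'=a10b01a$ exhibited in the proof of Theorem \ref{propo}.
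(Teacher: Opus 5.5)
Your reductions are correct. The word $d=c10b=b01c$ is a palindrome with suffix $1c$. The forms $(c1)^+$ and $(c0)^+$ follow by reversal. Item 3 of Theorem \ref{propo} transfers the right child to the left one. By the standard description of $u^-$, everything reduces to showing that $1a1$ is the longest palindromic prefix of $1c$. The paper gives no proof of its own: it presents the corollary as an immediate consequence of Theorem \ref{propo} (the palindromic forms $a10b10b$ and $a10b01a$) and never argues minimality. So that step is where the content lies, and your sketch does not establish it. You should also check separately the nodes $0^n1$ and $01^n$, where $w_\alpha=0$ or $w_\beta=1$ and the factorization $c=a10b$ is unavailable; these are one-line verifications.

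The difficulty is not a Fine--Wilf borderline. In fact $r+(|a|+2)-g\le(|b|+1)+(|a|+2)-1=|c|$ (not $|c|+1$), so $c$ always has period $g=\gcd(r,|a|+2)$. The real problem is the case $(|a|+2)\mid r$. Then $g=|a|+2$ is a period $c$ already has, so neither periodicity nor coprimality of $|w|_0,|w|_1$ can give a contradiction, because such longer palindromic prefixes genuinely exist. For $w=00100101$ one has $a=0$, $b=010$, $c=010010$. Here $Q=b$ is a palindromic prefix of $c$ with $r=3=|a|+2\le|b|+1$. It is excluded only because the letter of $c$ following it is $0$ rather than $1$. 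So the letter after $Q$ is the heart of the argument, not a consistency check, and your suggested route through ``$w$ is a proper power'' cannot work in this case.

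The gap closes as follows. If $g=|a|+2$, the period $|a|+2$ carries position $|Q|+1=|c|-r+1$ of $c$ to position $|c|-(|a|+2)+1=|b|+1$. That position holds $0$ in $c=b01a$, contradicting that $Q$ is followed by $1$ in $c$. If $g<|a|+2$, then $g$ and $|b|+2$ are coprime, since $g$ divides $|w_\alpha|=|a|+2$, which is coprime to $|w_\beta|=|b|+2$ by unimodularity of the Farey pair. Moreover $g+(|b|+2)-1\le|c|$, so Fine--Wilf gives $c$ period $1$. This is impossible because $c=a10b$ contains both letters.
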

\begin{corollary} Let $w=0\, c\, 1$ be a FC word associated to some element of $\cal T$. The maximum among all its
cyclic permutations is realized by the word $ {\tilde w}=1\, c \, 0$.
\end{corollary}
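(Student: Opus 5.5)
The plan is to combine the Lyndon property recorded in the footnote to Definition \ref{fw} with the reversal symmetry $c=\tilde c$ from Theorem \ref{propo}, item 2. By the footnote, every FC word $w=0\,c\,1$ (with $|w|>1$) is a Lyndon word, so it is the strict minimum among its cyclic permutations in lexicographic order. Strictness holds because $w$ is primitive: by item 1 of Theorem \ref{propo}, $|w|_1$ and $|w|_0$ are coprime, so $w$ is not a proper power and its $|w|$ rotations are pairwise distinct. I also want to show directly that the set of rotations of $w$ is closed under the two symmetries we need, and then obtain the maximum from the minimum.

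\textbf{Step 1.} The set $R(w)$ of cyclic permutations of $w$ is mapped onto the set of cyclic permutations of $\tilde w$ by reversal, since reversing a rotation $xy\mapsto yx$ gives $\tilde x\tilde y$, which is a rotation of $\tilde y\tilde x=\tilde w$. Likewise, complementation maps $R(w)$ onto $R(\hat w)$.

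\textbf{Step 2.} Use item 3 of Theorem \ref{propo}: $w_{\frac qp}=0\,\hat c\,1$ is also an FC word, so it is the minimum of its rotation class. Complementation reverses lexicographic order on words of equal length. Hence $\widehat{0\hat c1}=1\,c\,0$ is the maximum of the rotation class of $\widehat{w_{\frac qp}}$.

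\textbf{Step 3.} Identify that class with $R(w)$. Since $c=\tilde c$, we have $1\,c\,0=\tilde w$, which lies in $R(\tilde w)$. It remains to check that $\tilde w$ is a rotation of $w$. This holds because $1\,c\,0$ is obtained from $0\,c\,1$ by moving the first letter to the end, giving $c\,1\,0$, and then moving the last letter to the front, giving $0\,c\,1\mapsto 1\,0\,c$. That is not quite the needed rotation, so instead I use the palindromic factorization of item 4. Write $w=uv$ with $u,v$ palindromes. Then $\tilde w=\tilde v\tilde u=vu$, which is a rotation of $w$. Consequently $R(\tilde w)=R(w)$, and likewise $R(\widehat{w_{\frac qp}})=R(1c0)=R(\tilde w)=R(w)$.

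\textbf{Conclusion and main obstacle.} Combining Steps 2 and 3, $1\,c\,0=\tilde w$ is the maximum of $R(w)$. The delicate point is the identification in Step 3. The complementation argument alone only locates $1c0$ as the maximum of \emph{some} rotation class, and it is the palindromic factorization $w=uv\Rightarrow \tilde w=vu$ that pins that class to $R(w)$. A shortcut avoiding item 3 is also available: since $R(w)$ is closed under reversal by Step 3, reversal is an order-relevant bijection of the class. A cleaner variant is to note that the footnote's reversed-concatenation words, which are obtained from $w$ by reversal, are maximal Lyndon words, so $\tilde w$ is maximal in its class, which equals $R(w)$ by the factorization argument. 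Finally, the trivial case $|w|=1$ is excluded by the form $0c1$.
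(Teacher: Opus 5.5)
Your argument is correct and follows the route the paper has in mind. The paper states the corollary without proof, as an immediate consequence of the Lyndon footnote and items 2--4 of Theorem~\ref{propo}. In your argument, minimality of $w_{\frac qp}=0\,\hat c\,1$ plus complementation makes $1\,c\,0$ maximal in its rotation class. The palindromic factorization $w=uv$, $\tilde w=vu$ then identifies that class with the rotations of $w$. Equivalently, $\tilde w$ is the reversed-concatenation word, which is maximal by the footnote.

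Drop the aborted rotation attempt in Step 3. Also drop the ``shortcut'' via reversal: it proves nothing on its own, since reversal does not respect lexicographic order.
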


\begin{corollary} The number of FC words of length $n$ is given by Euler totient function $\varphi (n)= |\{0<i<  n : {\rm gcd}\, (i,n)=1\}|$.
\end{corollary}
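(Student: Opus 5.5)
The count follows by combining the bijection between $\mathcal F$ and $\Q_+\simeq{\cal T}$ with the length formula of Theorem~\ref{propo}, item 1. That statement says that $w\in\cal F$ equals $w_{\frac pq}$ with $\frac pq=\frac{|w|_1}{|w|_0}$, so $|w|=p+q$. Since the construction gives a one-to-one correspondence $\frac pq\mapsto w_{\frac pq}$ between reduced fractions of ${\cal T}$ (together with the two seeds $\frac01,\frac10$) and FC words, counting FC words of length $n$ is the same as counting reduced fractions $\frac pq$ with $p,q\ge 0$, $\gcd(p,q)=1$ and $p+q=n$.

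The key steps, in order, are as follows.
\begin{enumerate}
\item I would first check that every fraction arising in the construction is in lowest terms. This is already stated in the preliminaries: mediants of Farey pairs are reduced. I would also use that every reduced nonnegative fraction (including $\frac01,\frac10$) occurs exactly once. This justifies that $\frac pq\mapsto w_{\frac pq}$ is injective and that the pair $(|w|_1,|w|_0)$ determines $w$.
\item For $n\ge 2$, the admissible pairs are $(p,q)=(i,n-i)$ with $0<i<n$, since $p=0$ or $q=0$ forces $n=1$. The condition $\gcd(i,n-i)=1$ is equivalent to $\gcd(i,n)=1$, because $\gcd(i,n-i)=\gcd(i,n)$. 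Hence the number of such pairs is $|\{0<i<n:\gcd(i,n)=1\}|=\varphi(n)$.
\item The case $n=1$ gives the two words $0$ and $1$. These have to be treated separately, either by noting the convention mismatch or by restricting the statement to $|w|\ge2$, i.e. to words in $\cal T$ proper.
\end{enumerate}

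The only delicate point is step 1: surjectivity onto all coprime pairs, i.e. that every reduced positive fraction actually appears in $\cal T$. Here I would invoke the bijection $\Q_+\simeq\cal T$ asserted earlier, which Lemma~\ref{uno} makes concrete through the continued fraction expansion. That, together with item 1 of Theorem~\ref{propo}, closes the argument.
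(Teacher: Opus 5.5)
Your proposal is correct and follows essentially the paper's argument: both use item 1 of Theorem~\ref{propo} to identify FC words of length $n$ with coprime pairs $(p,n-p)$, and then count the $p$ that are coprime to $n$. Your explicit identity $\gcd(i,n-i)=\gcd(i,n)$ and your separate treatment of the length-one words $0$ and $1$ spell out details that the paper leaves implicit.
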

\begin{proof} From Theorem \ref{propo} we have that $\left|w\right|_1 = p, \left|w\right|_0 = q$. The totient function gives us the number of distinct $p$ which are relatively prime with $n$, which coincides with the number of possible pairs $(p, q = n-p)$ which are relatively prime.
\end{proof} 
\vspace{5mm}

\begin{figure}[htbp]
\begin{center}
%\fbox{\includegraphics[width=\textwidth, trim = 4.5cm 19cm 3.5cm 4cm]{capitolo1/figure/Christoffel tree.pdf}}
\includegraphics[width=\textwidth, trim = 4.4cm 19cm 3.6cm 4cm, clip]{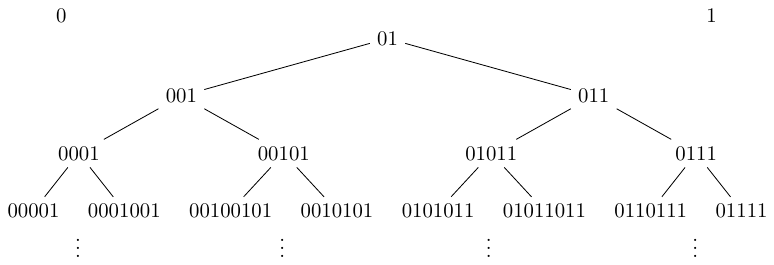}
\caption{The first four level of the Farey-Christoffel words tree }
\label{fig: The Farey-Christoffel tree}
\end{center}
\end{figure}

\section{Relation with cutting and sturmian sequences}
\label{sec:cutting}
Now, given $w\in \cal F$ we call ${|w|_1}/{|w|_0}$ the {\sl slope} of $w$. This is motivated by the following facts. To a given binary word  $w=u_1\cdots u_n$ we can associate a stepwise walk on the lattice $\Z^2$ constructed by moving by a vertical step upwards (resp. horizontal step oriented on the right) for each occurrence of the symbol $1$ (resp. $0$). Clearly, the walks corresponding to $w=0\,c\,1$ and ${\tilde w}=1\, c\,0$ meet at the origin $(0,0)$ and at the point $(|w|_0, |w|_1)$. 
Moreover, letting $\alpha={|w|_1}/{|w|_0}$, the central sequence $c$ is nothing but the {\sl cutting sequence} of the ray having slope $\alpha$, where one writes $0$ each time the ray cuts a vertical line, and $1$ each time it cuts a horizontal line, on the open interval $(0, |w|_0)$. 

\noindent
By the way, the FC word of slope $p/q$ can be defined from the very beginning as a sequence of unitary steps joining points of integer lattice from $(0, 0)$ to $(q, p)$ so that
(i) the corresponding path is the nearest path below the line segment joining these two points; (ii) there are no points of the integer lattice between the path and line segment (see \cite{BLRS}). When the slope is irrational, a similar definition leads to the notion of (infinite) {\sl Sturmian sequence}. 

\noindent
In  Figure \ref{fig:cuttingseq} we report the case with slope $3/5$ (with $r(w)\equiv \tilde w$).
\begin{figure}[h]
\begin{center}
\includegraphics[width=11.0cm, trim = 1mm 11mm 1mm 9mm, clip]{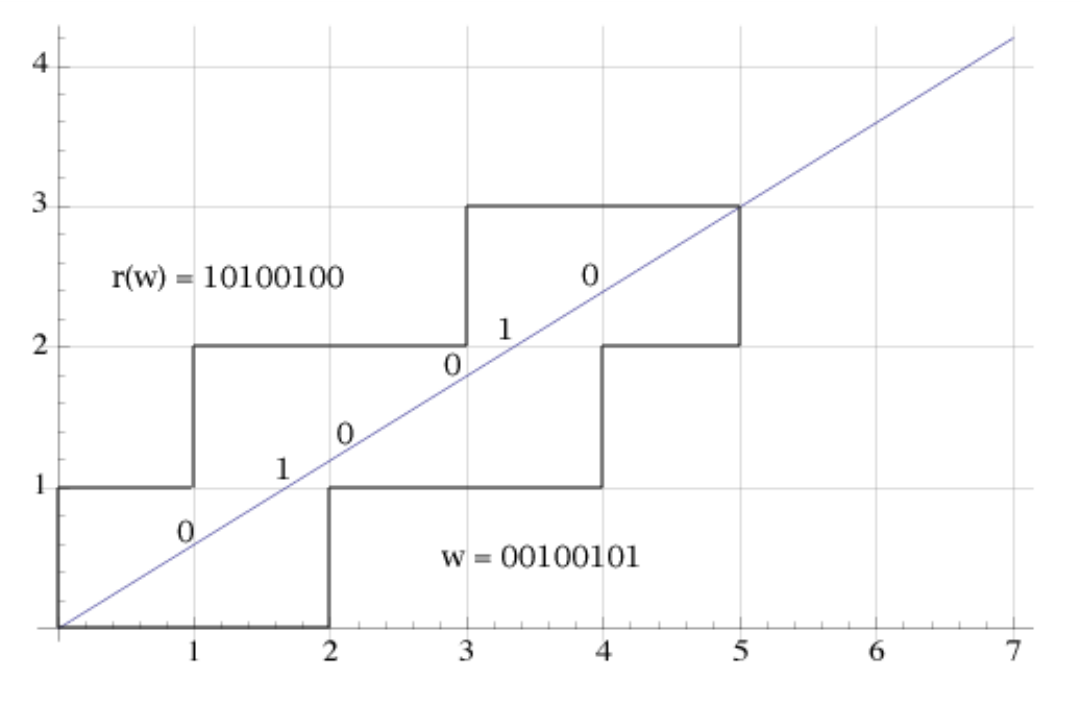}
\caption{}
\label{fig:cuttingseq}
\end{center}
\end{figure}

\noindent
Figure \ref{fig:cuttingparents} shows the cutting sequences of the two parents of $3/5$, namely $1/2$ and $2/3$ (when concatenating two finite cutting sequences, one has to interpose the word $10$, which corresponds to a cut with a corner).
\begin{figure}[h!]
\begin{center}
\includegraphics[width=11.0cm, trim = 1mm 5mm 1mm 17mm, clip]{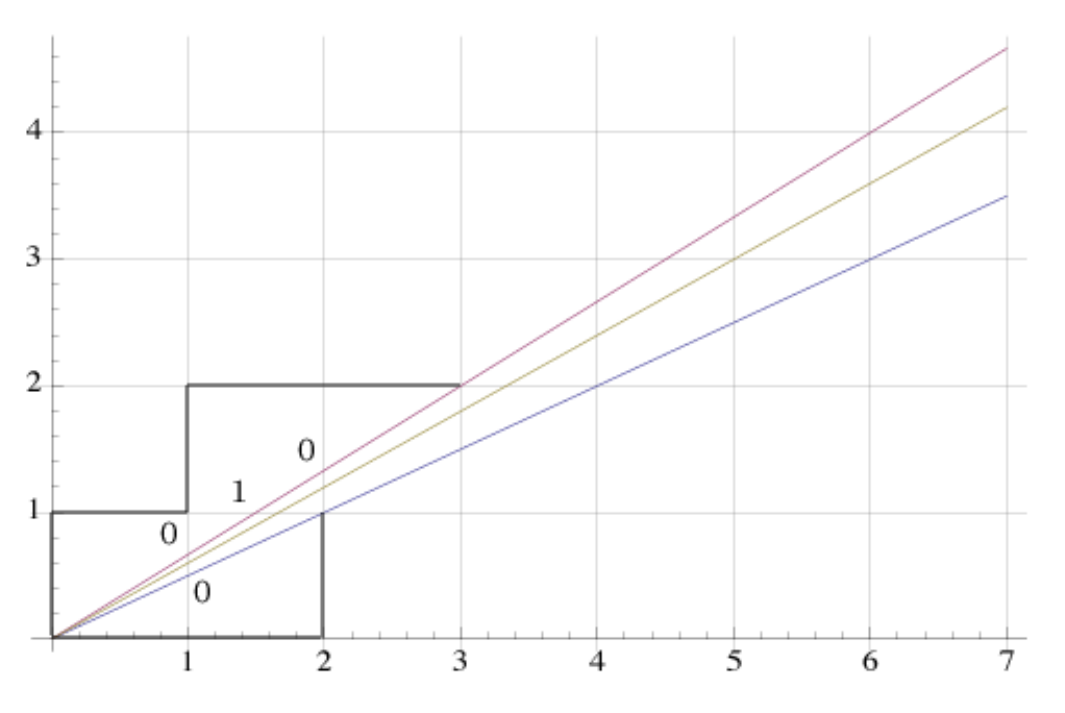}
\caption{}
\label{fig:cuttingparents}
\end{center}
\end{figure}

\begin{remark}\label{factorization}  The standard factorization  $w=w_{\frac {p}{q}} = w_{\frac {p'}{q'}} \, w_{\frac {p''}{q''}}$ in terms of FC sub-words (cf. Remark \ref{lessico}), 
can be obtained geometrically by cutting the walk corresponding to $w$ at the lattice point $(q',p')$ closest to the segment joining $(0,0)$ with $(q,p)$. The last property implies that $pq'-qp'=1$ and therefore $p(p'+q')=p'(p+q)+1=1\, ({\rm mod}\, p+q)$. In the same way, we can show that $q(p''+q'')=1\, ({\rm mod}\, p+q)$. We therefore see that the lengths of the factors $|w_{\frac {p'}{q'}}|=p'+q'$ and  $|w_{\frac {p''}{q''}}|=p''+q''$ are the respective multiplicative inverses in $\{0,1, \dots, p+q-1\}$ of $p$ and $q$. 
\end{remark}

\noindent
Now, putting together Remark \ref{subtree1} and, e.g., \cite{BC}, Section 1 (or else \cite{Py}, Chap. 6), one sees that the FC word $w\equiv w_\alpha$ can also be characterized as the symbolic representation of the orbit $\{R^k_\beta (0)\}_{k=0}^{n-1}$ w.r.t. the partition $S^1=[0,1-\beta)\cup [1-\beta, 1)$, with $n=|w|$ and  $R_\beta: S^1 \to S^1$ the rotation of angle $\beta =\phi (\alpha)$, sometimes also called  the {\sl Sturm sequence of} $\beta$. 
More specifically, set
$$
\epsilon (x) =  \left\{
  \begin{array}{ll} 
 0\; , & \;   0 \leq x < 1-\beta\\ [0.3cm]
 1 \; , & \;  1-\beta \leq x < 1 
 \end{array} \right.
$$
and note that $x+\beta = R_\beta (x) +\epsilon (x)$, which can be iterated to give
$$
x+n\beta = R^n_\beta (x) +\epsilon (R^{n-1}_\beta (x) )+\epsilon (R^{n-2}_\beta (x) )+\cdots + \epsilon (x)=R^n_\beta (x)+[n\beta]
$$
Setting $w=u_1\cdots u_n$, we then have 
\be
\label{sturm}
u_k=\epsilon (R^{k}_\beta (x) )=  [k\beta]-[(k-1)\beta]\quad , \quad k=1, \dots, n.
\ee
Note that, since $\beta \in (0,1)$ we have $u_k\in\{0,1\}$. More precisely, if $\alpha >1$ ($\beta >\frac 12$) in $w$ the symbol $0$ is always isolated and between any two $0$'s there are either $[\alpha]$ or $[\alpha]+1$  $1$'s. If instead $\alpha <1$ ($\beta <\frac 12$) in $w$ the symbol $1$ is  isolated and between any two $1$'s there are either $[1/\alpha]$ or $[1/\alpha]+1$  $0$'s. The opposite plainly happens to ${\hat w}$.

\vsni
\noindent
The above generation rule can be further rephrased as follows (closely mirroring the original construction by Christoffel). Let $p/q \in \cal T$ and set $n=p+q$. Define the group translation $T_{p}: \Z_n \to \Z_n$ as 
$$T_p : x \mapsto x+p \, ({\rm mod}\, n)$$ 

\begin{lemma}\label{lem0} Let $w=u_1\cdots u_n \in \cal F$, with $n>1$,  and $\frac pq = \frac {|w|_1}{|w|_0}$ (so that $|w|=n= p+q$) be the corresponding element of $\cal T$. Consider the partition $\Z_n = Q_0\cup Q_1$ with
$Q_0=\{0,1,\dots, q-1\}$ and $Q_1=\{q,q+1,\dots, n-1\}$. 
$$
u_k = \ell \Longleftrightarrow T_p^{(k-1)}(0) \in Q_\ell, \quad \ell\in \{0,1\}, \quad k=1, \dots ,n
$$
\end{lemma}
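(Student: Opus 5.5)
The plan is to reduce the statement to the rotation description (\ref{sturm}) of FC words given just above, specialised to a rational rotation. For $w=w_{\frac pq}$ we have $\alpha=p/q$, so $\beta=\phi(\alpha)=\frac{p}{p+q}=\frac pn$. The rotation $R_\beta$ on $S^1=[0,1)$, started at $0$, then only visits the points $j/n$ with $j\in\{0,\dots,n-1\}$. Indeed $R_\beta^{k-1}(0)=\{(k-1)p/n\}$, which equals $T_p^{(k-1)}(0)/n$, where $T_p^{(k-1)}(0)=(k-1)p \bmod n$ is read as an element of $\{0,\dots,n-1\}$. So the map $x\mapsto x/n$ conjugates $T_p$ on $\Z_n$ to $R_\beta$ restricted to the orbit of $0$.

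Next we pull the partition back. By (\ref{sturm}), $u_k=\epsilon(R^{k-1}_\beta(0))$, and this equals $1$ exactly when $R^{k-1}_\beta(0)\ge 1-\beta=q/n$. Through the conjugacy this reads $T_p^{(k-1)}(0)\ge q$, that is, $T_p^{(k-1)}(0)\in Q_1$. Similarly $u_k=0$ exactly when $T_p^{(k-1)}(0)\in\{0,\dots,q-1\}=Q_0$. This is precisely the claimed equivalence.

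The main obstacle is bookkeeping, together with making sure that (\ref{sturm}) really produces $w_{\frac pq}$ and not, say, its reversal $\tilde w$ or a cyclic shift. Two points need checking.

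\emph{Indexing.} We must check that the $k$-th letter corresponds to $R^{k-1}_\beta(0)$. This follows from $u_k=[k\beta]-[(k-1)\beta]$, since $[k\beta]-[(k-1)\beta]=1$ iff $\{(k-1)\beta\}\ge 1-\beta$.

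\emph{Correct word.} As a sanity check we verify $u_1=[\beta]=0$ and $u_n=p-[(n-1)p/n]=p-(p-1)=1$. This agrees with $w=0\,c\,1$ from Theorem \ref{propo}. Counting letters, the number of $1$'s is $[n\beta]=p$, as it should be.

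To avoid depending entirely on the cited references behind (\ref{sturm}), I would also give a self-contained lattice argument. After $k-1$ letters the walk sits at a lattice point $(x,y)$; attach to it the value $v=px-qy$. A $0$-step changes $v$ to $v+p$, and a $1$-step changes it to $v-q\equiv v+p \pmod n$. Hence $v\equiv (k-1)p \pmod n$ along any walk starting at the origin. For the lower Christoffel path we need $0\le v\le n-1$ at every point: the path stays weakly below the segment and no lattice point lies strictly between the path and the segment. Therefore $v=T_p^{(k-1)}(0)$ exactly. The next step must keep $v$ in this range, so it is a $1$-step iff $v-q\ge 0$, i.e. iff $v\in Q_1$. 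This reproduces the lemma.
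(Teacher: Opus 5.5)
Your proposal is correct, and its main route differs from the paper's written proof. The paper does not go through \eqref{sturm}. It reads off from the lattice-path picture that $u_k=0$ iff $kp \bmod n>(k-1)p \bmod n$, and notes that the increment is either $+p$ or $-q$. Only one of these keeps the value in $\{0,\dots,n-1\}$, according as $(k-1)p \bmod n$ lies in $Q_0$ or in $Q_1$.

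Your first argument instead identifies $T_p$ with the rotation $R_{p/n}$ restricted to the orbit $\{j/n : 0\le j<n\}$ of $0$, and pulls back the Sturmian partition. This shows clearly that the lemma is just \eqref{sturm} at $\beta=\phi(p/q)=p/n$. Your indexing check is genuinely needed here: \eqref{sturm} as printed reads $\epsilon(R^k_\beta(x))$, whereas the telescoped identity gives $\epsilon(R^{k-1}_\beta(0))$. However, this route puts the whole burden on the cited fact that \eqref{sturm} produces $w_{p/q}$. As you acknowledge, your checks on $u_1$, $u_n$ and $|w|_1$ do not establish that on their own.

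Your lattice argument is the paper's proof made explicit. The invariant $v=px-qy\equiv(k-1)p \pmod n$ is exactly what justifies the paper's ``one deduces''. It also forces the point reached after $k$ letters to be $(k-\lfloor kp/n\rfloor,\lfloor kp/n\rfloor)$, so it actually proves \eqref{sturm} for rational slope, and the two routes merge.

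The one step to spell out is the bound $v\le n-1$. Suppose a path point $(x,y)$ had $v\ge n$. Then $x\ge 1$, and $(x-1,y+1)$ lies strictly above the path, because the path's top at abscissa $x-1$ has height at most $y$. It also lies strictly below the segment: its value is $v-n\ge 0$, and it cannot lie on the segment, whose only lattice points are the endpoints since $\gcd(p,q)=1$. This contradicts condition (ii).
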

\begin{proof}
From the geometric interpretation of the FC words given above, one  deduces the following rule: for any $k=1, \dots ,n$ we have
$u_k = 0$ if $k\cdot p \, ({\rm mod}\, n) > (k-1)\cdot p \, ({\rm mod}\, n)$ and $u_k=1$ in the opposite case. 
\noindent
Now note that, setting $(k-1)\cdot p \, ({\rm mod}\, n) = \ell$, if 
$k\cdot p \, ({\rm mod}\, n) = \ell+p$ then $u_k=0$, whereas if $k\cdot p \, ({\rm mod}\, n) = \ell-q$ then $u_k=1$. In other words, $u_k=0$ if and only if 
$(k-1)\cdot p \, ({\rm mod}\, n) \in Q_0$ and $u_k=1$ if and only if 
$(k-1)\cdot p \, ({\rm mod}\, n) \in Q_1$.
\end{proof}
\begin{remark}\label{subtree1}
If one works with the sub-tree $\cal S$ instead of $\cal T$ (see Remark \ref{subtree}), assigning the initial symbols $0$ and $1$ to $0/1$ and $1/1$ (instead of $1/0$), then the above conclusions are unchanged provided $p/q$ is replaced by $\phi (p/q) = p/(p+q)$ (and $q/p$ by $q/(p+q)$), so that the denominator of the corresponding fraction always equals the length of the FC word. Moreover, the algorithm of Lemma \ref{lem0}, remains unchanged provided we let $T_p$ act on $\Z_q$ instead of $\Z_{p+q}$ and we set $Q_0=\{0,1,\dots, q-p-1\}$ and $Q_1=\{q-p,q-p+1,\dots, q-1\}$. 

\vsni
\noindent
Finally, we note that the map $\phi$ induces the substitution map on FC words given by $0\to 0$ and $1\to 01$. A short reflection shows that this rule can be used to obtain the FC word $w_{\alpha}=u_1\cdots u_n$ constructed above from the Sturm sequence of $\alpha$ itself, that is the word
$w_{\alpha}'= v_1\cdots v_{q}$, with $q=|w|_0$ and
$v_k=[k\alpha]-[(k-1)\alpha]$. 
\end{remark}

\section{Relation with continued fractions}
We have already seen (cf. Lemma \ref{uno}) how the depth of each element $x\in {\cal T}$ is related to the partial quotients of its continued fraction expansion (c.f.e.) $x=[a_0;a_1,\dots ,a_n] $. This connection can be further expanded. One starts by constructing a matrix representation of the positive rationals as follows:
given $z\in \C$ and $X =  \left(
  \begin{array}{cc}
  n & m\\
  t & s
  \end{array}\right) \in SL(2,\Z)$ set $X(z)\coloneqq (nz+m)/(tz+s)$ and identify\vspace{1mm}
 \be\label{ide}
 X\Longleftrightarrow  X(1) =\frac {n+m}{t+s}\in \Q_+
 \ee \vspace{1mm}

\noindent
Clearly $m/s$ and $n/t$ are but the parents of $x$. 
We have
\be\label{AB} {1\over 2} \Longleftrightarrow 
\left(  \begin{array}{cc}
  1&0\\
  1&1 \\
  \end{array}\right)=:A \quad \hbox{e} \quad
   {2\over 1} \Longleftrightarrow 
\left(  \begin{array}{cc}
  1&1\\
  0&1 \\
  \end{array}\right)=:B
\ee 
and moreover
$$
\left(  \begin{array}{cc}
  n&m\\
  t&s \\
  \end{array}\right) \left(  \begin{array}{cc}
  1&0\\
  1&1 \\
  \end{array}\right) 
  =\left(  \begin{array}{cc}
  m+n&m\\
 s+t&s \\
  \end{array}\right) 
 \Longleftrightarrow {m\over s}\oplus {m+n\over s+t}
$$ and
$$ 
\left(  \begin{array}{cc}
  n&m\\
  t&s \\
  \end{array}\right)
  \left(  \begin{array}{cc}
  1&1\\
  0&1 \\
  \end{array}\right)=
  \left(  \begin{array}{cc}
  n&m+n\\
  t&s+t \\
  \end{array}\right)
\Longleftrightarrow {m+n\over
s+t}\oplus {n\over t} 
$$ 
\vskip 0.1cm
\noindent
Hence the matrices $A$ and $B$, when acting from the right, move downwards on $\cal T$, respectively  to the left and to the right. 

\vskip 0.1cm
\noindent
Putting together the above, along with Lemma  \ref{uno}, we get:
\begin{proposition}\label{codaggio}
Each $\frac pq = [a_0;a_1,\dots ,a_n] \in \cal T$, with ${\rm depth} (\frac pq)>1$, corresponds to a unique element
$X\in SL(2,\Z)$, for which there are only two possibilities:
\begin{itemize}
\item $n$ even   $\; \Longrightarrow\; $ $X=B^{a_0}A^{a_1}\cdots A^{a_{n-1}}B^{a_n-1}$
\item $n$ odd $\; \Longrightarrow\; $ $X=B^{a_0}A^{a_1}B^{a_2}\cdots A^{a_n-1}$
\end{itemize} 
Moreover, let $\frac {p}{q}={\frac {p'} {q'}}\oplus {p''\over q''}$ and 
$w_{\frac {p}{q}} = w_{\frac {p'}{q'}} \, w_{\frac {p''}{q''}}$ be the corresponding FC word, then 
$$
X = \left(  \begin{array}{cc}
  |w_{\frac {p''}{q''}}|_1& |w_{\frac {p'}{q'}}|_1\\
   |w_{\frac {p''}{q''}}|_0& |w_{\frac {p'}{q'}}|_0 \\
  \end{array}\right)
 $$
\end{proposition}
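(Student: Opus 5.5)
Every vertex of $\cal T$ of depth at least $2$ is reached from the root $\frac11$ by a unique finite word in the letters $A$ (left move) and $B$ (right move). So I would attach to each vertex the matrix $X\in SL(2,\Z)$ obtained by multiplying, from the right, the letters along this path, starting from a suitable root matrix. Everything then follows by induction on the depth, using the two displayed identities $XA \leftrightarrow \frac ms\oplus\frac{m+n}{s+t}$ and $XB\leftrightarrow \frac{m+n}{s+t}\oplus\frac nt$.

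\textbf{Normalization and the invariant.} The root $\frac11$ has parents $\frac01$ and $\frac10$, so under (\ref{ide}) it corresponds to the identity $I$, with $m/s=0/1$ and $n/t=1/0$. I would prove by induction on the depth that the matrix $X=\bsm n&m\\ t&s\esm$ reached by a path has the following property: $\frac m s$ and $\frac n t$ are the left and right parents of $X(1)$, and $\frac ms,\frac nt$ is a Farey pair (since $\det X=1$). The base case is $I$. The inductive step is exactly the two displayed products:
\begin{itemize}
\item right multiplication by $A$ replaces the right parent by the child, giving the left child;
\item right multiplication by $B$ replaces the left parent by the child, giving the right child.
\end{itemize}
Uniqueness of $X$ follows because the map $X\mapsto X(1)$ together with its parents determines the columns, and because $\cal T$ contains each rational exactly once.

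\textbf{The product formula.} Next I would identify the path to $x=[a_0;a_1,\dots,a_n]$. The standard Stern--Brocot descent is: $a_0$ steps right ($B^{a_0}$), then $a_1$ steps left, then $a_2$ right, and so on. Since the root already sits at depth $1$ and Lemma \ref{uno} gives depth $\sum a_i$, the total number of moves is $\sum a_i-1$. So the last block must be shortened by one.

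To justify the alternation, I would use that $B^{k}$ corresponds to $x\mapsto x+k$, and that going left corresponds to $x\mapsto x/(1+x)$ applied to the reciprocal structure. Concretely, I would argue by induction on $n$:
\begin{itemize}
\item if $x>1$, then $x$ lies in the right subtree of the root, and conjugating by $B$ (i.e. $x\mapsto x-1$) maps this subtree isomorphically onto $\cal T$, shifting $a_0$ down by one;
\item if $x<1$, then $x\mapsto 1/x$ is the reflection of $\cal T$ exchanging $A$ and $B$, which shifts the expansion by one index.
\end{itemize}
The parity of $n$ then determines whether the last block is an $A$-block or a $B$-block, and this gives the two cases of the statement. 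The case $a_0=0$ simply means the word begins with $A^{a_1}$; this is consistent with the formula, since $B^0=I$.

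\textbf{The last claim, and the main obstacle.} The last claim, about the entries of $X$, follows from the invariant together with Theorem \ref{propo}(1). The columns of $X$ are $(n,t)$ and $(m,s)$, where $\frac nt=\frac{p''}{q''}$ is the right parent and $\frac ms=\frac{p'}{q'}$ is the left parent. Theorem \ref{propo}(1) gives $|w_{p''/q''}|_1=p''$ and $|w_{p''/q''}|_0=q''$, and likewise for the left parent. Here one must check that the representatives stored in the matrix are the reduced ones, which holds since $\det=1$ forces coprimality.

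The step I expect to be the main obstacle is the bookkeeping in the continued-fraction induction: the off-by-one in the final exponent, and the parity convention for whether the last block is $A$ or $B$. This is especially delicate at the boundary case $a_n=1$, where the final block disappears. I would handle it by first verifying the formula on small cases (e.g. $\frac12=[0;2]$ gives $A$, and $\frac21=[2]$ gives $B$), and then running the reflection/translation induction.
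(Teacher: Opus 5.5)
Your proposal is correct and follows essentially the paper's own route. You start from the root matrix $I$, use the two right-multiplication identities for $A$ and $B$ to keep the columns of $X$ equal to the right and left parents, invoke Lemma~\ref{uno} for the shortened last exponent, and read off the entries via Theorem~\ref{propo}(1), with $\det X=1$ guaranteeing reduced representatives. Your translation/reflection induction makes explicit the alternating block structure of the path, which the paper takes for granted. Note that the step $x\mapsto x-1$ is the identity $X_x=BX_{x-1}$ (left multiplication, not a conjugation), and it lowers $a_0$ rather than $n$, so the induction should be run on the depth.
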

\vskip 0.2cm

\noindent
For a given element $x \in \cal T$, the matrix product $X$ can be used to code the descending path which reaches $x$ starting from $\frac 1 1$ as a binary string $\sigma (x) \in \{0,1\}^*$, where each symbol $0$ corresponds to an occurrence of $A$ (down left move) and each symbol $1$ to an occurrence of $B$ (down right move).

\noindent
We may now ask what kind of relation can be established between $\sigma (x)$ and its FC word $w (x)\in \cal F$ (a reverse relation yielding the c.f.e. of $x$ from the corresponding FC word $w$  is discussed in Section \ref{dw} below).
\noindent
The sought relation can be readily obtained from Corollary \ref{bellu}. Indeed, given a palindromic word $u\in\{0,1\}^*$ and a symbol $a\in \{0,1\}$, we  set
\be\label{palcl}
\Phi_a (u) = (u\, a )^+ = ( a\, u)^-
\ee
For example we have $\Phi_0(0110)=01100110$ and $\Phi_1(0110)=011010110$. Note moreover that $\Phi_a(\epsilon)=a$. A direct consequence of Corollary \ref{bellu} is now the following rule.

\begin{proposition}\label{ricodaggio} Let $\sigma (x)=\sigma_1\cdots \sigma_k \in \{0,1\}^*$ be the path of $x \in \cal T$, and $w(x)=0\, c\, 1$ its FC word.
Then we have
\be\label{ricodice}
c = \Phi_{\sigma_k} \circ \Phi_{\sigma_{k-1}} \circ \cdots \circ \Phi_{\sigma_1} (\epsilon)
\ee
\end{proposition}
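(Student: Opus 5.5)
We argue by induction on the length $k$ of the path $\sigma(x)$, that is, on ${\rm depth}(x)-1$. For $k=0$ we have $x=\frac 11$, $\sigma(x)=\epsilon$ and $w_{\frac 11}=01$, so $c=\epsilon$. This agrees with the right-hand side of (\ref{ricodice}), which is the empty composition applied to $\epsilon$.

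For the inductive step, let $y$ be the node with path $\sigma_1\cdots\sigma_{k-1}$, and let $x$ be its child reached by the move $\sigma_k$. By the inductive hypothesis, $w(y)=0\,c'\,1$ with
$$c'=\Phi_{\sigma_{k-1}}\circ\cdots\circ\Phi_{\sigma_1}(\epsilon).$$
By Theorem \ref{propo}(2), $c'$ is a palindrome, so $\Phi_{\sigma_k}(c')$ is defined. The coding fixes $\sigma_k=0$ for a down-left move (multiplication by $A$) and $\sigma_k=1$ for a down-right move (multiplication by $B$). Corollary \ref{bellu} then gives the central word of $w(x)$ directly:
\begin{itemize}
\item if $\sigma_k=0$, it is $(0c')^-=(c'0)^+=\Phi_0(c')$;
\item if $\sigma_k=1$, it is $(1c')^-=(c'1)^+=\Phi_1(c')$.
\end{itemize}
Hence $c=\Phi_{\sigma_k}(c')$, which closes the induction.

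Two consistency checks remain. First, the definition (\ref{palcl}) asserts $(ua)^+=(au)^-$ for palindromic $u$. This holds because reversal sends the shortest palindrome with prefix $ua$ to the shortest palindrome with suffix $\widetilde{ua}=a\tilde u=au$. The equality is in any case already contained in Corollary \ref{bellu}. Second, the left child in Corollary \ref{bellu} must be the same as the down-left move coded by $A$, that is, the child $\frac{m}{s}\oplus\frac{m+n}{s+t}$, which lies between the left parent and $x$. This matches the first formula of Corollary \ref{bellu}, and can be checked on depth one: $\Phi_0(\epsilon)=0$ gives $w=001=w_{\frac12}$, the left child of $\frac11$, and $\Phi_1(\epsilon)=1$ gives $011=w_{\frac21}$.

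The only real obstacle is making sure this left/right orientation matches between Corollary \ref{bellu} and the matrix coding through $A,B$. Once that is settled, the proposition is a direct iteration of Corollary \ref{bellu}.
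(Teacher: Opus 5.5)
Your proof is correct and follows the paper's route. The paper states this proposition simply as a direct consequence of Corollary \ref{bellu}, and your induction on the path length (base case $c=\epsilon$ for $\frac 11$, step $c=\Phi_{\sigma_k}(c')$ read off from that corollary, with your checks of $(ua)^+=(au)^-$ and of the left/right orientation against the $A$/$B$ coding) is exactly the iteration it leaves implicit.
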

\noindent
{\sc Example.}  Taking $x=3/5=[0;1,1,2]$, from Proposition \ref{codaggio} we have $\sigma (x) = 010$. Thus, applying  rule (\ref{ricodice}) we get 
$$
c=\Phi_{0} \circ \Phi_{1} \circ \Phi_{0} (\epsilon) = \Phi_{0} \circ \Phi_{1} (0) = \Phi_{0}(010) = 010010.
$$
Finally  $w(x)=0\,c\,1=00100101$ (to be compared with the portions of the trees $\cal T$ and $\cal F$ reproduced above). 
\begin{remark}\label{palin}
The  maps (\ref{palcl}) have been introduced by Aldo de Luca in \cite{DeL}, who called them {\sl palindromic closures}. More generally,
in combinatorial word theory literature the transformation mapping the word $\sigma (x)$ to the central palindrome $c$ of $w(x)$ is usually encoded by a function $Pal : \{0,1\}^* \to \{0,1\}^*$ defined recursively as follows \cite{BdeLR}: set $Pal (\epsilon ) =\epsilon$. If $u=vz \in \{0,1\}^*$ for some $z\in \{0,1\}$ then  $Pal (u) = (Pal (v)z)^+$. Although the two approaches are of course equivalent, the one  outlined above seems more transparently connected to the present construction.
\end{remark}
\subsection{Reversals and duality}\label{dual}

\vskip 0.2cm
\noindent
If we let $A$ and $B$ act on the left we get
$$
\left(  \begin{array}{cc}
  1&0\\
  1&1 \\
  \end{array}\right) 
  \left(  \begin{array}{cc}
  n&m\\
  t&s \\
  \end{array}\right) 
  =\left(  \begin{array}{cc}
  n&m\\
 n+t&m+s \\
  \end{array}\right) 
 \Longleftrightarrow \frac {n+m}{n+m+t+s}
$$ 
and
$$ 
\left(  \begin{array}{cc}
  1&1\\
  0&1 \\
  \end{array}\right)\left(  \begin{array}{cc}
  n&m\\
  t&s \\
  \end{array}\right)
  =
  \left(  \begin{array}{cc}
  n+t&m+s\\
  t&s\\
  \end{array}\right)
\Longleftrightarrow \frac {n+m+t+s}
{s+t}
$$ 
That is, they move a fraction $\frac p q$ respectively to its left and right {\sl descendants} $\frac p {p+q}$ and $\frac {p+q} q$ on $\cal T$. Now, if we associate to a given fraction $x\in \cal T$ a matrix product $X=\prod_{i=1}^d M_i$ where $d = {\rm depth}(x)$, as above, then we can consider the involution $x \to {\hat x}$, where ${\hat x}$ is the rational number represented by the reversed matrix product ${\hat X}=\prod_{i=d}^1 M_i$. This map acts as a permutation on $\Q_+$ and the corresponding permuted tree $\hat {\cal T}$ can be constructed starting from the root node $\frac 1 1$ and writing under each vertex  $\frac p q$ the set of its descendants $\{\frac p {p+q}, \frac {p+q} q\}$.  

\vskip 0.1cm
\noindent
Note moreover that,  according to Proposition \ref{codaggio}, the following rule is in force: let $x=[a_0; a_1,\dots, a_n]$, then 
 \begin{itemize}
\item $n$ even   $\; \Longrightarrow\; $ ${\hat X}=B^{a_n-1}A^{a_{n-1}}\cdots A^{a_1}B^{a_0}$
\item $n$ odd $\; \Longrightarrow\; $ ${\hat X}=A^{a_n-1}B^{a_{n-1}}\cdots A^{a_1}B^{a_0}$
\end{itemize} 
and therefore,  
 \begin{itemize}
\item $n$ even   $\; \Longrightarrow\; $ ${\hat x}=[\, a_n-1\, ; a_{n-1}, \cdots , a_1, a_0+1]$
\item $n$ odd $\; \Longrightarrow\; $ ${\hat x}=[\, 0\, ; a_n-1, a_{n-1}, \cdots , a_1, a_0+1]$
\end{itemize} 
\begin{definition} Let $\sigma (x)=\sigma_1\cdots \sigma_k \in \{0,1\}^*$ be the path of $x \in \cal T$, and $w(x)=0\, c\, 1$ its FC word.
The FC word ${\hat w} = 0\, {\hat c} \, 1$ associated to ${\hat x}$, for which 
$$
{\hat c} = \Phi_{\sigma_1} \circ \Phi_{\sigma_{1}} \circ \cdots \circ \Phi_{\sigma_k} (\epsilon)
$$
is called the {\sl dual word} to $w$. In the same vein, $x$ and ${\hat x}$ will be referred to as {\sl dual elements} in $\cal T$.
\end{definition}
 It turns out (see \cite{BdeLR}) that whenever $w$ and $w^*$ are dual words associated to the irreducible fractions $x=\frac p q$ and ${\hat x}=\frac {\hat p} {\hat q}$, we have $p+q={\hat p}+{\hat q}$ and  ${\hat p}$ and ${\hat q}$ are the respective multiplicative inverses in $\{0,1, \dots, p+q-1\}$ of $p$ and $q$, that is $p{\hat p}, q{\hat q} \equiv 1\, ({\rm mod} \, n)$ with $n=p+q$ (these inverses exist because $p$ and $q$ are relatively prime and therefore are also relatively prime to $n=p+q$. Therefore ${\hat p}$ and ${\hat q}$ are relatively prime). A straightforward consequence of  this property and the content of Remark \ref{factorization} is the following:
\begin{lemma} Let  $x=\frac p q$ and ${\hat x}=\frac {\hat p} {{\hat q}}$ be dual elements in $\cal T$. Then
$$
\frac {p}{q}={\frac {p'} {q'}}\oplus {p''\over q''} \quad \hbox{\rm if and only if} \quad \frac {\hat p}{\hat q}={\frac {p'} {p''}}\oplus {q'\over q''} 
$$
\end{lemma}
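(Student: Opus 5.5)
We prove the forward implication by identifying the parents of $\hat x$ explicitly; the converse then follows by symmetry, since $x\mapsto \hat x$ is an involution. Throughout, $n=p+q=\hat p+\hat q$, and $\hat p,\hat q$ are the inverses of $p,q$ modulo $n$, as recalled just before the statement.

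\emph{Step 1 (the candidate parents form a Farey pair).} Write $\frac pq=\frac{p'}{q'}\oplus\frac{p''}{q''}$ with $p''q'-p'q''=1$, so that $p=p'+p''$ and $q=q'+q''$. We want to show that $\frac{p'}{p''}<\frac{q'}{q''}$ is a Farey pair, i.e. $p''q'-p'q''=1$. This is literally the same unimodular relation as for the original parents, so the mediant $\frac{p'+q'}{p''+q''}$ is well defined, is in lowest terms, and is the child of this Farey pair in $\cal T$. The one degenerate case, a parent equal to $\frac10$, makes $p''$ or $q''$ zero. There the expression $\frac{p'}{p''}$ means $\frac10$, and the identity still holds.

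\emph{Step 2 (the mediant is $\hat x$).} It remains to show $\hat p=p'+q'$ and $\hat q=p''+q''$. By Remark \ref{factorization}, $p(p'+q')\equiv 1$ and $q(p''+q'')\equiv 1 \pmod n$. Both $p'+q'$ and $p''+q''$ lie in $\{1,\dots,n-1\}$ when $n>2$, since they are the lengths of proper nonempty factors of $w$. By uniqueness of the inverse modulo $n$ in $\{0,\dots,n-1\}$, we get $p'+q'=\hat p$ and $p''+q''=\hat q$, using the duality property from \cite{BdeLR}. Hence $\frac{\hat p}{\hat q}=\frac{p'+q'}{p''+q''}=\frac{p'}{p''}\oplus\frac{q'}{q''}$. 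As a consistency check, $\hat p+\hat q=n$ agrees with the sum of the lengths of the two factors. The small cases $n\le 2$, namely the root $\frac11$ with parents $\frac01,\frac10$, we verify by hand.

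\emph{Step 3 (the converse).} Suppose $\frac{\hat p}{\hat q}=\frac{p'}{p''}\oplus\frac{q'}{q''}$ with $\frac{p'}{p''},\frac{q'}{q''}$ a Farey pair. Since $\hat x$ is dual to $x$, the forward direction applied to $\hat x$ gives
\[
\frac pq=\frac{p'}{q'}\oplus\frac{p''}{q''}.
\]
This uses that $p,q$ are in turn the inverses of $\hat p,\hat q$. Alternatively, note that the parents of $x$ are unique. The Farey relation is again the same determinant, and the sums $p'+p''$ and $q'+q''$ recover $p$ and $q$ because the inverse is unique.

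The main obstacle is the bookkeeping in Step 2. We must confirm that Remark \ref{factorization} matches $p'+q'$ with $p$, rather than with $q$, under the paper's convention $pq'-qp'=1$. We also need the ordering $\frac{p'}{p''}<\frac{q'}{q''}$, so that the two fractions genuinely appear in the left/right parent roles; this is equivalent to $p''q'-p'q''>0$, which holds.
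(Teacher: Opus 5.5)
Your proposal is correct and follows the paper's own route: the paper derives the lemma directly from Remark~\ref{factorization} (the factor lengths $p'+q'$ and $p''+q''$ are the inverses of $p$ and $q$ modulo $n$) together with the duality property of \cite{BdeLR}, which forces $\hat p=p'+q'$ and $\hat q=p''+q''$ exactly as in your Step~2, while your observation that the Farey condition is the same determinant and your involution argument for the converse supply the details the paper leaves implicit. One harmless slip: the degenerate cases are $p'=0$ or $q''=0$ (never $p''=0$), and they give the new parents $\frac{p'}{p''}=\frac01$ or $\frac{q'}{q''}=\frac10$ respectively.
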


\subsection{Motions on $\hat {\cal T}$ and $\hat {\cal F}$.} \label{motions}
We start recalling some results discussed in \cite{BI} about dynamics on  $\hat {\cal T}$. We start observing  that the descendants of a fraction $\frac p q$ are just its pre-images w.r.t. the map
$F:\R_+\to \R_+$ given by \be\label{effe}
F: x\mapsto  \left\{
  \begin{array}{ll} {\displaystyle \frac{x}{1-x}}\; , & \;   0 \leq x \leq 1\\ [0.3cm]
 x-1 \; , & \; x>1 \end{array} \right.
 \ee 
 The map $F$ can thus be used to generate ``vertically'' the permuted tree $\hat {\cal T}$.
Moreover, according to (\cite{BI}, Proposition 2.3), $\hat {\cal T}$ can  also be generated ``horizontally'' by 
means of the map $R:\R_+ \to\R_+$ given by $R(0)=1$, $R(\infty)=0$ and
\be\label{zagier}
R(x)= \frac{1}{ 1-x+2[x]}, \qquad x\in \R_+
\ee
More precisely,  denoting with $r_n$ the $n$-th rational number obtained by `reading' $\cal T$ row by row, from left to right, starting from the root, and letting $r_{n^*}$ be the element of the permuted tree $\hat {\cal T}$ corresponding to $r_{n} \in \cal T$, 
it holds $r_{\hat n}=R^{n-1}(1)$ (or else $r_{n}=R^{{\hat n}-1}(1)$).
\vskip 0.1cm
\noindent
Turning now to consider the permuted FC tree $\hat {\cal F}$, an easy consequence of the construction outlined above (see also \cite{BLRS}, Lemma 2.2) is the following:
\begin{lemma}\label{substi} Let $w$ be the FC word associated to some element $\frac p q \in \cal T$. The FC words associated to its descendants 
$\frac p {p+q}$ and $\frac {p+q} q$ are obtained by applying to $w$ the substitution rules:
$$
 \begin{array}{ll} 
 \displaystyle  {S_0 \, : \,(0,1)\to (0, 01)} \\ [0.3cm]
 \displaystyle  {S_1 \, : \,(0,1)\to (01, 1)} 
 \end{array}
 $$
\end{lemma}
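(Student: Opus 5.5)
Since the substitutions $S_0$ and $S_1$ are monoid morphisms, the plan is to prove the statement by induction on the depth in $\cal T$, together with the stronger claim that $S_a$ sends the standard factorization of $w_{\frac pq}$ to the standard factorization of the FC word of the descendant. For $\frac pq$ we write $D_0(\frac pq)=\frac p{p+q}$ and $D_1(\frac pq)=\frac{p+q}q$, which are the Möbius maps given by left multiplication by $A$ and $B$. We first check the base of the induction. For the root we have $S_0(0)=0$ and $S_0(1)=01$, so $S_0(w_{\frac11})=S_0(01)=001=w_{\frac12}$. Likewise $S_1(01)=011=w_{\frac21}$. We also check the two boundary words $w_{\frac01}=0$ and $w_{\frac10}=1$. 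Here $S_0$ fixes $0=w_{\frac 01}$ and sends $1$ to $01=w_{\frac11}$, and $D_0(\frac10)=\frac11$. Similarly $S_1$ sends $0$ to $01$ and fixes $1$, and $D_1(\frac 01)=\frac11$, $D_1(\frac10)=\frac10$.

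For the inductive step, let $\frac pq={\frac {p'} {q'}}\oplus {p''\over q''}$ with $w_{\frac pq}=w_{\frac {p'}{q'}}\,w_{\frac {p''}{q''}}$ (Definition \ref{fw}). Both parents have smaller depth, so by the induction hypothesis $S_a(w_{\frac {p'}{q'}})=w_{D_a(p'/q')}$ and $S_a(w_{\frac {p''}{q''}})=w_{D_a(p''/q'')}$, where the boundary fractions are covered by the base computations. Since $S_a$ is a morphism,
$$S_a(w_{\frac pq})=w_{D_a(p'/q')}\,w_{D_a(p''/q'')}.$$
It then remains to show two things. First, $D_a$ maps Farey pairs to Farey pairs and preserves their order. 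Second, $D_a$ commutes with the mediant. Both follow from the matrix picture of Subsection \ref{dual}. Left multiplication by $A$ or $B$ is linear on the column vectors $(p,q)^T$, so it commutes with adding vectors, which is the mediant. It has determinant one, so unimodularity is preserved. Its entries are positive, so it acts as an increasing Möbius map on $\R_+$ and the order is kept. Concretely, $\frac{p'}{p'+q'}\oplus\frac{p''}{p''+q''}=\frac{p}{p+q}$, and the determinant condition is immediate. Hence the concatenation above is exactly the definition of $w_{D_a(p/q)}$ built from the Farey pair $D_a(p'/q'),D_a(p''/q'')$, which closes the induction.

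The main obstacle is bookkeeping rather than substance. The induction has to run over the parents, not over the path $\sigma(x)$. It must also include the boundary fractions $0/1$ and $1/0$, which are the parents of the leftmost and rightmost nodes, so that the statement applied to parents is available. As a quick consistency check, the counts come out right: $|S_0(w)|_0=|w|_0+|w|_1=q+p$ and $|S_0(w)|_1=p$, giving slope $\frac p{p+q}$ as required by Theorem \ref{propo}(1). The same count works for $S_1$.
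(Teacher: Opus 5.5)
Your proof is correct and follows essentially the route the paper intends. The paper gives no proof: it calls the lemma an easy consequence of the construction (left multiplication by $A$, $B$ producing the descendants, together with the concatenation definition of FC words) and points to Lemma 2.2 of Berstel--Lauve--Reutenauer--Saliola. Your strong induction over parents makes that consequence explicit, using that $S_a$ is a monoid morphism while left multiplication by $A$ or $B$ is linear and unimodular, hence commutes with mediants and preserves ordered Farey pairs; your treatment of the boundary words $0$ and $1$ is exactly what the induction needs. The one tacit step, which the paper's Definition \ref{fw} also assumes, is that a reduced fraction is the mediant of a unique Farey pair, so that $D_a(p'/q')$ and $D_a(p''/q'')$ really are the parents of $D_a(p/q)$. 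Also, order preservation already follows from the determinant being $1$; note that $A$ and $B$ have nonnegative, not positive, entries.
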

\vskip5mm
\noindent
Now note that any FC word $w$ of length $n$ can be written in the form
\be
\label{isolated1}
w= 0^{n_1}1\, 0^{n_2}\cdots  0^{n_p}\, 1 ,\quad n_i\geq 1\, , \quad \sum_{i=1}^p n_i= q
\ee
whenever its slope $|w|_1/|w_0| = p/q \in (0,1)$, or else
\be
\label{isolated0}
w= 0\, 1^{n_1}\, 0\, 1^{n_2}\cdots  0\, 1^{n_q} ,\quad n_i\geq 1\, , \quad \sum_{i=1}^q n_i= p
\ee
whenever $p/q  >1$. As noted before (cf.remark after eq. \eqref{sturm}, see also \cite{Se}) the integers $n_i$ may get only two values. 
They are $[q/p]$ or $[q/p]+1$, if the slope $p/q$ is smaller than one; $[p/q]$ or $[p/q]+1$, otherwise. Following \cite{Se}, we call the exponent $[q/p]\geq 1$ (or $[p/q]$) the {\sl value} of $w$.

\noindent
This naturally  induces a decomposition of ${\cal F}$ (or $\hat {\cal F}$) as ${\cal F} = {\cal F}_{<1} \cup {\cal F}_{\geq 1}$ (with obvious meaning of the notations), so that $S_0: {\cal F} \to  {\cal F}_{<1}$ and $S_1: {\cal F} \to  {\cal F}_{\geq 1}$, in particular $F_{<1}$ consists of all the left nodes of $\hat {\cal F}$, while ${\cal F}_{\geq 1}$ consists of all the right node, plus the root.

\noindent
We are now ready to introduce a map $T$ on words which generates the ``horizontal'' motion on $\hat {\cal F}$, namely the displacement row by row, from left to right, starting from the root, in a similar way to how $R$ does it for $\hat {\cal T}$.
\vskip 0.5cm 

\begin{theorem}\label{fm} The map  $T$ that moves from a given word $w\in \hat {\cal F}$ to the next one, can be written as $T=T_0\cup T_1$, where the maps $T_0: {\cal F}_{<1} \to  {\cal F}_{\geq 1}$ and $T_1:  {\cal F}_{\geq 1} \to {\cal F}_{<1}$ act as follows:
$$
 \begin{array}{ll} 
 \displaystyle  {T_0 \, : \,(0^{k+1}1, 0^k1)\to ((01)^k1, (01)^{k-1}1}) \\ [0.3cm]
 \displaystyle  {T_1 \, : \,(01^{k+1},01^k)\to (0^k1, 0^{k+1}1)} 
 \end{array}
 $$
 where $k$ is the value of $w$.
\end{theorem}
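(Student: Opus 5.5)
The plan is to reduce the statement to two facts. The first is the horizontal generation of $\hat{\cal T}$ by the map $R$ of (\ref{zagier}) (\cite{BI}, Proposition 2.3). The second is that an FC word is determined by its slope (Theorem \ref{propo}, item 1). Since the bijection $\frac pq\mapsto w_{\frac pq}$ carries $\hat{\cal T}$ onto $\hat{\cal F}$ position by position, it suffices to prove two things for every $w\in\hat{\cal F}$ of slope $x=p/q$:
(a) the word $T(w)$ obtained by the block replacements has slope $R(x)$;
(b) $T(w)$ is again an FC word.

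Here I read the notation $T_0:(0^{k+1}1,0^k1)\to(\cdots)$ via (\ref{isolated1})–(\ref{isolated0}): $w$ is cut into its blocks, and each long block and each short block is replaced by the indicated image.

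For (a) I count letters. Take $x<1$, so $[x]=0$, the value is $k=[q/p]$, and $R(x)=q/(q-p)$. Let $w$ have $a$ blocks $0^{k+1}1$ and $b$ blocks $0^k1$, so $a+b=p$ and $a(k+1)+bk=q$. The image contains $a(k+1)+bk=q$ ones and $ak+b(k-1)=q-p$ zeros, so its slope is $q/(q-p)=R(x)$.

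Now take $x>1$, with $k=[p/q]$ and $R(x)=q/(q-p+2kq)$. Let $w$ have $a$ blocks $01^{k+1}$ and $b$ blocks $01^k$, so $a+b=q$ and $b=q(k+1)-p$. The image contains $q$ ones and $ak+b(k+1)=kq+b=2kq+q-p$ zeros, which again gives slope $R(x)$. The root $w=01$ is handled directly, as are the degenerate blocks arising when $k=1$ in $T_0$, where $(01)^{k-1}1=1$.

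For (b), which I expect to be the real content, I would express each $T_i$ as a composition of the substitutions $S_0,S_1$ of Lemma \ref{substi}. These map FC words to FC words, and their inverses along $\hat{\cal T}$ correspond to the map $F$ of (\ref{effe}).

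Case $x<1$. The blocks are $0^{k+1}1=S_0^kS_1(0)$ and $0^k1=S_0^kS_1(1)$. Hence $w=S_0^kS_1(u)$ for a unique binary word $u$, namely the sequence of block types. I then check letter by letter that
$$
T_0(w)=S_1S_0^{k-1}S_1(u),
$$
using $S_1S_0^{k-1}S_1(0)=S_1(0^k1)=(01)^k1$ and $S_1S_0^{k-1}S_1(1)=(01)^{k-1}1$.

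Case $x>1$. The blocks are $01^{k+1}=S_1^kS_0(1)$ and $01^k=S_1^kS_0(0)$, so $w=S_1^kS_0(u)$. I check similarly that
$$
T_1(w)=S_0^kS_1(u),
$$
using $S_0^kS_1(0)=0^k\,0^k1$ and $S_0^kS_1(1)=0^k1$. The long and short images must be matched carefully against the statement: the image $0^{k+1}1$ belongs to the short block.

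The remaining point, and the main obstacle, is to show that $u$ itself is an FC word. This holds because $w$ lies in the subtree of $\hat{\cal T}$ reached from $u$ by applying the descendant maps $k$ times and then once more. Concretely, $F^{k+1}(x)$ is the slope of $u$, and Lemma \ref{substi}, applied inductively along the descending path in $\hat{\cal T}$, shows that every FC word of slope $<1$ and value $k$ has the form $S_0^kS_1(u)$ with $u\in\cal F$; symmetrically for slope $>1$.

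Once $u\in\cal F$ is established, $T(w)$ is an image of an FC word under substitutions that preserve $\cal F$, so $T(w)\in\cal F$. By (a) its slope is $R(x)$, so by the uniqueness of FC words of given slope, $T(w)$ is exactly the word following $w$ in $\hat{\cal F}$. The same bookkeeping also covers the jump from the last word of one row to the first word of the next.
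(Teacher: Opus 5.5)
Your argument is correct, but it takes a different route from the paper. The paper never uses $R$ and works only with the tree structure of $\hat{\cal F}$. Every word of ${\cal F}_{<1}$ is a left child whose right neighbour is its sibling, so the paper obtains $T_0=S_1\circ S_0^{-1}$ directly, blockwise $0^{n_i}1\mapsto 0^{n_i-1}1\mapsto (01)^{n_i-1}1$. For $T_1$ it inducts on depth using $T(w)=S_0\bigl(T(S_1^{-1}(w))\bigr)$: the right neighbour of a right child is the left child of the parent's right neighbour, and this also covers the row jump. It then splits into $k>1$, where the induction hypothesis for $T_1$ applies to the parent, and $k=1$, where the parent lies in ${\cal F}_{<1}$ and $T_0$ is used.

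You instead combine four ingredients:
\begin{enumerate}
\item a letter count showing that the block replacement sends slope $x$ to $R(x)$;
\item the factorizations $w=S_0^kS_1(u)$ and $w=S_1^kS_0(u)$, with closed forms $T_0(w)=S_1S_0^{k-1}S_1(u)$ and $T_1(w)=S_0^kS_1(u)$, so that $T(w)\in{\cal F}$ by Lemma \ref{substi};
\item uniqueness of the FC word of a given slope;
\item the horizontal generation of $\hat{\cal T}$ by $R$ from \cite{BI}.
\end{enumerate}

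Your route gives an explicit description of $T$ through the substitutions. Note that $S_1S_0^{k-1}S_1(S_0^kS_1)^{-1}=S_1S_0^{-1}$ is exactly the paper's sibling rule. Likewise $S_0^kS_1(S_1^kS_0)^{-1}$ is the climb to the nearest common ancestor and back down, which the paper's induction unrolls one level at a time. Your slope count is also the finite version of the computation later needed in Theorem \ref{extensionT}. The paper's route, by contrast, is self-contained and does not rely on the result of \cite{BI}; combined with your count, it would actually reprove that $R$ generates $\hat{\cal T}$ row by row.

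Two small slips. First, $S_0^kS_1(0)=0\,0^k1=0^{k+1}1$, not $0^k0^k1$. Second, for the first word $0^k1$ and the last word $01^k$ of a row you get $u=1=w_{\frac 10}$ and $u=0=w_{\frac 01}$. These are not elements of ${\cal F}$, so your tree argument for $u\in{\cal F}$ does not apply and these cases must be checked by hand. They do give $(01)^{k-1}1$ and $0^{k+1}1$, as required.
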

\begin{proof}
Let $w= 0^{n_1}1\, 0^{n_2}\cdots  0^{n_p}$ with:
$$
n_i = k \,\, \mbox {or} \,\, k+1 \quad \mbox{for} \,\, i=1, \ldots, p \, \,  , \quad \mbox{and} \,\,\sum_{i=1}^p n_i= q \, .
$$
Let $w'$ be the parent node of $w$ and $T(w)$, we have that $w'$ is given by $S_0^{-1} (w)$ and, recalling that $0^0 = \epsilon$, we have:
$$
w' = 0^{n_1 -1}10^{n_2 -1}1\ldots0^{n_p -1}1 \, .
$$
Then, thanks to $S_1$, we have 
$$
T(w) = S_1 (w') = (01)^{n_1 -1}1(01)^{n_2 -1}1\ldots(01)^{n_p -1}1 \, ,
$$
and we have shown $T_0 = T\restrict[{\mathcal{F}}_{<1}]$. \vspace{1mm} \\
Now we will show that $T_1 = T\restrict[{\mathcal{F}}_{\geq1 }]$ by induction on the depth $m$ of the word $w$. For $m=1$, that $T(01) = T_1 (01) = 001$ is trivial. Let's then assume it holds true for each $w$ at depth $m$, and we will prove it for $m+1$. 
Let $w = 01^{n_1}01^{n_2}\ldots01^{n_q}$ with:
$$
n_i = k \,\, \mbox {or} \,\, k+1 \quad \mbox{for} \,\, i=1, \ldots, q  \, \,  , \quad \mbox{and} \, \, \sum_{i=1}^q n_i= p \, .
$$
Let  $w'$ be the parent node of $w$, and $w'' = T (w')$ the parent node of $T(w)$. Then $T(w) = S_0 (w'')$. Clearly, $w'$ is given by
$$
w' = S_1^{-1} (w) = 01^{n_1 -1}01^{n_2 -1}\ldots01^{n_q -1} .
$$
Now, let us consider the $q$ subwords $01^{n_i-1}$ individually, and we call $\overline{n}_i$ the complement of $n_i$ in the set $\{ k, k+1\}$. Then, if $k>1$, we have, by the induction hypothesis, that $w'' = T_1(w')$ and so, by the action of $T_1$, the subword $01^{n_i-1}$ becomes $0^{\overline{n}_i -1}1$, and applying $S_0$, we get:
$$
T(w) = S_0 (w'') = 0^{\overline{n}_1}10^{\overline{n}_2}1\ldots0^{\overline{n}_q}1
$$
which we wanted to show. \\
On the other hand, if $k=1$, then the subword $01^{n_i -1}$ is either $0$ or $01$, so that $w' \in {\mathcal{F}}_{<1}$ and $T(w')=T_0(w')$. Thus, applying $T_0$, it is clear\footnote{The definition of $T_0$ given by the theorem is equivalent to saying that for each subword $0^n1$ we substitute each of the first $n-1$  zeros with $01$, while what remains, i.e. $01$, we substitute with $1$.} that $\forall \, i=1,\ldots,q$ for which $n_i -1 = 0$, we get $01$, while $\forall \, i=1,\ldots,q$ for which $n_i -1 = 1$, we get $1$. And, applying $S_0$, we get that $01$ becomes $001$, while $1$ become $01$. So, putting it all together, we have
$$
01^{n_i} \xlongrightarrow{S^{-1}} 01^{n_i -1} \xlongrightarrow{T} 0^{\overline{n}_i -1}1 \xlongrightarrow{S_0} 0^{\overline{n}_i}1
$$
which is what we needed to prove.
\end{proof} 
\vskip 0.2cm
\noindent
The map $T$, defined for FC words, can be used to generate ``horizontally" the tree $\hat {\cal F}$ as the map $R$ can be used to generate ``horizontally" the tree $\hat {\cal T}$. Since $R$ is defined on $\R_+$, we would like to find an extension of $T$ such that the correspondence with $R$ is not limited to $\Q_+$. \\
To this end, let us first recall the definition and characterization of a notion already introduced in Section \ref{sec:cutting}.
As described by Aldo de Luca and Filippo Mignosi in \cite{deLM}:
\begin{quote}
A Sturmian word\footnote{In this paper we  use the term ``sequence".} can be characterized as a (one-sided) infinite word which is not ultimately periodic and is such that for any positive integer $n$ the number $g(n)$ of its factors of length $n$ is minimal (i.e. $g(n)=n+ 1$). A Sturmian word can also be defined by considering the intersections with a squared-lattice of a semi-line having a slope which is an irrational number\footnote{This construction is usually called \emph{billiard sequence}. We will limit ourself to consider semi-line with intercept $0$, i.e.: starting at the origin $(0,0)$.}.
\end{quote}
Another common characterization of Sturmian sequences is the following: an aperiodic sequence over a binary alphabet is Sturmian if and only if it is \emph{balanced} (see \cite{BS}, \cite{HM}). An infinite word $w$ on $\{0,1\}$ is balanced if given two factors of $w$, $u$ and $v$, with $|u| = |v|$, the difference between $|u|_0$ and $|v|_0$, or equivalently between $|u|_1$ and $|v|_1$, is at most $1$. \\
We recall that Sturmian sequences can also be regarded as infinite cutting sequences (cf. Section 2), thus enjoying the property that if the slope $x$ is $>1$ then they have isolated $0$'s interspersed with blocks of the form $1^k$ or $1^{k+1}$ ($k = \lfloor 1/x \rfloor$), or, otherwise, they have isolated $1$'s, with blocks of the form $0^k$ or $0^{k+1}$ if $x<1$ ($k=\lfloor x \rfloor$) \cite{Se}. We can now state the following:
\begin{theorem}
\label{extensionT}
Given a Sturmian sequence $w$ with irrational slope $x$ (and intercept $0$), the sequence $\overline{w}$ given by $0\overline{w} = T(0w)$ is a Sturmian sequence. Moreover, its slope is $R(x)$. 
\end{theorem}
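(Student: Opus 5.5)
The plan is to reduce the statement to Theorem \ref{fm} by realizing the Sturmian sequence as a limit of FC words, and then to identify the slope by a continuity argument for $R$. Let $x\notin\Q$ with continued fraction expansion $x=[a_0;a_1,a_2,\dots]$, and let $x_m=p_m/q_m$ be the sequence of nodes of $\cal T$ along the infinite descending path toward $x$. These are the mediants obtained by truncating the path code $\sigma(x)\in\{0,1\}^{\N}$, and they converge to $x$. By the cutting-sequence description of Section \ref{sec:cutting}, the central palindromes $c_m$ of $w_{x_m}=0\,c_m\,1$ are prefixes of one another. By Proposition \ref{ricodaggio} each $c_{m+1}=\Phi_{\sigma_{m+1}}(c_m)$ has $c_m$ as a prefix, and these prefixes converge to the cutting sequence of the ray of slope $x$ issued from the origin. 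Hence $0w$ is the limit of the words $0c_m$, and in particular $0w$ agrees with $w_{x_m}$ on a prefix whose length grows to infinity.

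Next I would check that the block rules $T_0$ and $T_1$ of Theorem \ref{fm} are local. They act on the decomposition (\ref{isolated1}) or (\ref{isolated0}) block by block, each block $0^{n}1$ or $01^{n}$ being replaced according to $n\in\{k,k+1\}$. For $m$ large the value $k$ of $w_{x_m}$ stabilizes: it equals $[1/x]$ (or $[x]$), because $x_m\to x$ and $x$ is irrational, so $x$ is not an endpoint where the value jumps. The same block rule then applies to the infinite sequence $0w$. This sequence splits uniquely into blocks of the two admissible lengths, and $T(0w)$ is defined blockwise.

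Since $T(w_{x_m})$ and $T(0w)$ agree on longer and longer prefixes, we get $T(0w)=\lim_m T(w_{x_m})$. Each $T(w_{x_m})$ is an FC word starting with $0$, namely the word of the node following $x_m$ in $\hat{\cal T}$. By the correspondence between $T$ and $R$ stated after Theorem \ref{fm}, this node is $R(\hat x_m)$ in the permuted labelling. One has to be careful here about which tree ($\cal T$ or $\hat{\cal T}$) labels the slopes. Working with $\hat{\cal F}$ as Theorem \ref{fm} does, and using that $S_0,S_1$ realize the inverse branches of $F$ (Lemma \ref{substi}), the slope of $T(w')$ equals $R(\text{slope of }w')$ for every FC word $w'$. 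I would verify this directly from the two branches of $T$: the block counts give $|T(w')|_1/|T(w')|_0$ explicitly in terms of the numbers of blocks of each length, and this matches $1/(1-y+2[y])$.

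Finally, $R$ is continuous at irrational points, since it is discontinuous only at the integers, so $R(x_m)\to R(x)\notin\Q$. The word $\overline w$ defined by $0\overline w=\lim T(w_{x_m})$ is then the limit of FC words whose slopes tend to the irrational number $R(x)$. Such a limit is balanced, because balance is a property of finite factors and every factor lies inside a balanced Christoffel word for $m$ large. It is also aperiodic, because its frequency of $1$'s is irrational. Hence $\overline w$ is Sturmian with slope $R(x)$. I expect the main obstacle to be the slope identity $\text{slope}(T(w'))=R(\text{slope}(w'))$ for arbitrary FC words, in the correct labelling, together with the claim that the prefix agreement is long enough after applying $T$. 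The prefix issue is mild: a partially read final block affects only a bounded tail.
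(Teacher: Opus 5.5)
Your argument is correct, but it takes a genuinely different route from the paper.

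\textbf{The paper's route.} It works directly on the infinite sequences. Two lemmas prove that $T_1$ and $T_0$ preserve balance, by induction on the length of factors. An unbalanced pair of factors of $\overline w$ is pulled back through $T_1^{-1}$ or $T_0^{-1}$ (after extending boundary blocks, with a case analysis) to an unbalanced pair in $w$. The slope is then computed by counting blocks of each length in long prefixes of $w$, and irrationality of $R(x)$ gives aperiodicity.

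\textbf{Your route.} You use that $T$ is local, with a value $k$ that is eventually constant along the Stern--Brocot approximants $x_m$, to get $T(0w)=\lim_m T(w_{x_m})$. You then import balance from the finite case through Theorem~\ref{fm}, since each $T(w_{x_m})$ is a Christoffel word. This replaces the paper's delicate combinatorics by a soft approximation argument. The price is that it rests on Theorem~\ref{fm}, whereas the paper needs only the block definitions of $T_0,T_1$.

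It also yields more. We have $0\overline w=\lim_m w_{R(x_m)}$ with $R(x_m)\to R(x)\notin\Q$, and fixed prefixes of $w_y$ are locally constant in $y$ near an irrational. Hence $\overline w$ is again the intercept-$0$ sequence of slope $R(x)$, so $T$ can be iterated within the class considered; the paper does not show this.

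\textbf{The step you flag as the main obstacle.} It is actually immediate. Since $\hat{\cal T}$ read row by row is $(R^n(1))_{n\ge 0}$, the node after $y$ is $R(y)$ itself, not $R(\hat y)$. Your block count, which is the finite version of the paper's frequency computation, gives slope $q/(q-p)$ for $y=p/q<1$, and $q/((2k+1)q-p)=1/(1-y+2[y])$ for $y\ge 1$ with $k=[y]$.

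\textbf{Steps that do deserve a line.} First, the slope of $\overline w$. Since $0\overline w$ agrees with $T(w_{x_m})$ except on the image of the last block, prefixes of unbounded length have $1$-frequency tending to $R(x)/(1+R(x))$. This rules out eventual periodicity, and since balanced words have letter frequencies, it identifies the slope. Second, the convergence $0c_m\to 0w$. It rests on the fact that $x$ lies strictly between the parents of $x_m$, so no fraction with denominator smaller than $q_m$ separates $x_m$ from $x$.
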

We consider, in this theorem, Sturmian sequences preceded by a $0$ in the same way we consider, in Theorem \ref{fm}, FC words in the form $0c1$ with $c$ finite cutting sequence. In this way, without further adjustments, the map $T$ in Theorem \ref{fm} is well defined on the set of Sturmian sequences with irrational slope (and intercept $0$).
To prove this theorem, we first show that $T(w)$ is a balanced sequence, and we will do so through two lemmas.
\begin{lemma}
Given $T_1 : (01^{k+1}, 01^k) \rightarrow (0^k1, 0^{k+1}1)$ and a Sturmian sequence $w$ with irrational slope $x > 1$ (and intercept $0$), then $\overline{w}$ given by $T_1(0w) = 0\overline{w}$ is balanced.
\end{lemma}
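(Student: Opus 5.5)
The plan is to work at the level of blocks. We reduce the balance of $\overline{w}$ to the balance of an auxiliary $\{0,1\}$-sequence that records which of the two block lengths occurs at each position.

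\textbf{Block decomposition.} Since the slope $x$ is irrational and $>1$, the sequence $0w$ factors uniquely as an infinite concatenation $0w = 01^{n_1}\,01^{n_2}\,01^{n_3}\cdots$ with $n_i\in\{k,k+1\}$, where $k$ is the value. This comes from the cutting-sequence description recalled before Theorem \ref{extensionT}. By the definition of $T_1$, we then have
$$T_1(0w)=0^{\bar n_1}1\,0^{\bar n_2}1\,0^{\bar n_3}1\cdots,$$
where $\bar n_i$ is the complement of $n_i$ in $\{k,k+1\}$. Define $d_i=n_i-k\in\{0,1\}$ and $\bar d_i=1-d_i$. The proof then goes in three steps.

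\textbf{Step 1: $w$ balanced implies $d=d_1d_2\cdots$ balanced.} Suppose there were two windows of $j$ consecutive indices, say $i,\dots,i+j-1$ and $l,\dots,l+j-1$, with $\sum d$ differing by at least $2$. Then the factors $01^{n_i}\cdots 01^{n_{i+j-1}}$ and $01^{n_l}\cdots 01^{n_{l+j-1}}$ of $0w$ both contain exactly $j$ zeros, and their lengths differ by at least $2$. Let $U$ be the shorter one. Take the longer one, keep a prefix of the same length as $U$, and arrange it to begin just after a $0$ so that it contains at most $j-1$ zeros. This gives two factors of the same length whose zero counts differ by at least $1$ in the wrong direction, and a little more bookkeeping (choosing where to cut within the $1$-runs) should push the difference to $2$, contradicting balance.

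I expect this to be the delicate point. To avoid fiddly case analysis, the cleaner route is the standard equivalence: $w$ is balanced iff for all $j$ the sums of any $j$ consecutive $n_i$ differ by at most $1$. I would prove this equivalence once, as a small auxiliary claim, because it is used in both directions.

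\textbf{Step 2: complementation.} Replacing $d$ by $\bar d$ preserves balance, since window sums of $\bar d$ are just $j$ minus those of $d$. So $\bar d$ is balanced.

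\textbf{Step 3: back to $\overline{w}$.} Now $\overline{w}$ (after dropping the leading $0$) is built from blocks $0^{k+\bar d_i}1$. These have the same shape as before, with the roles of $0$ and $1$ exchanged and the separator $1$ now placed at the end of each block. Apply the converse direction of the auxiliary claim to this block word: since the window sums of the $\bar n_i$ differ by at most $1$, any two factors of equal length of $T_1(0w)$ have $1$-counts differing by at most $1$. Balance of $\overline{w}$ follows, since it is a suffix of $T_1(0w)$.

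For this converse, I would bound the number of $1$'s in a factor of length $L$. It lies between the maximal number of complete blocks that fit inside the factor and that number plus one, and the block-sum condition controls this uniformly in the starting position.

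The main obstacle, as said, is a clean proof of the auxiliary equivalence between balance of the block word and balance of the sequence of block lengths, especially handling factors that start and end in the middle of runs. I would first try an argument via the extremal factors: the factors with the most and the fewest $1$'s of a given length can be chosen to start at block boundaries.
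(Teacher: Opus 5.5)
Your plan is correct and is essentially the paper's proof run forwards instead of by contradiction. It uses the same three ingredients: turning an imbalance between factors of $\overline{w}$ into two windows of whole blocks, the fact that $T_1$ swaps the two block types, and turning an imbalance between block-windows of $0w$ back into an imbalance between factors. The two points you leave open are settled in the paper as follows.

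\emph{Step 3.} Take a shortest pair of equal-length factors whose $1$-counts differ by $2$; they must have the form $1\overline{u}'1$ and $0\overline{v}'0$. Extend the end zero-runs of the second one to whole blocks, which gives two windows with equally many blocks whose lengths differ by at least $2$. A bare count of complete blocks is not enough here, since on its own it leaves three possible values for the number of $1$'s.

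\emph{Step 1.} Append to the shorter window the $0$ that follows it, and delete the leading $0$ of the longer one. After truncating to a common length, the zero counts are $j+1$ and at most $j-1$, so the difference is $2$ as required.
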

\begin{proof}
We will use induction on the length $n$ of the factors of $\overline{w}$.
For $n=1$, it is trivial that the difference in the number of $0$'s between two factors is at most $1$. Moreover, the statement clearly holds for $1 \leq n \leq k+1$, since there can only be at most one $1$ in each factor.\\
Let the statement be true for some $n>k+1$, and let's assume, by contradiction, that there exist two factors $\overline{u}$ and $\overline{v}$ with $|\overline{u}| = |\overline{v}| = n+1$ and\footnote{Without loss of generality, we may assume equality, instead of  $|\overline{u}|_1 \geq |\overline{v}|_1 + 2$, since the case $|\overline{u}|_1 > |\overline{v}|_1 + 2$ immediately contradicts the inductive hypothesis.} $|\overline{u}|_1 = |\overline{v}|_1 + 2$.
Then it follows that $\overline{u}$ and $\overline{v}$ are of the form $\overline{u}=1\overline{u}'1$ and $\overline{v}=0\overline{v}'0$; that is, the ends of the two words must necessarily be different. Otherwise, by considering the subwords obtained by removing an equal symbol at the ends\footnote{Clearly, the opposite situation, $\overline{u}=0\overline{u}'0$ and $\overline{v} = 1\overline{v}'1$, would be even worse.} we would obtain words of length $n$ that differ in the number of $1$'s by two, contradicting the inductive hypothesis.
We can thus consider the factor obtained by extending\footnote{This is always possible thanks to the definition of $T_1$ and the characteristics of $w$.} the block of $0$'s that $\overline{v}$ has as a prefix and the block of $0$'s that it has as a suffix, obtaining $0^t\overline{v}'0^s1$ for some $t,s \leq k$. Comparing it with $\overline{u}'1$, these two words do not have the same length, but they certainly have the same number of $1$'s and, therefore, the same number of blocks, either $0^k1$ or $0^{k+1}1$. Since we have added at least a $1$ to $\overline{v}$ and removed a $1$ from $\overline{u}$, it follows that $|0^t\overline{v}'0^s1| \geq |\overline{u}'1| + 2$.
Denoting by $a$ and $b$ respectively the number of $0^{k+1}1$ blocks in $\overline{u}'1$ and in $0^t\overline{v}'0^s1$, we have $b \geq a + 2$.\\
Considering the preimages via $T_1$, we obtain two subwords of $w$, which we denote by $T_1^{-1} (\overline{u}'1) = u$ and $T_1^{-1} (0^t\overline{v}'0^s1) = v$, which have the same number $d$ of $01\ldots1$ blocks. However, $u$ has $a$ blocks of type $01^{k}$, whereas $v$  has $b$; consequently, $u$ has $d - a$ block of type $01^{k+1}$, whereas $v$ has $d - b$.
This implies that $|u| \geq |v| + 2$, with the same number of $0$'s. Then, by removing the prefix $0$ from $u = 0u'$ and appending to $v$, as suffix, the symbol $0$ that follows it, we obtain $u'$ and $v0$, two factors of $w$, with $|u'| \geq |v0|$ and $|v0|_0 - |u'|_0 = 2$, which is absurd because it contradicts the hypothesis that $w$ is a Sturmian sequence and, as such, should be balanced.
\end{proof}

\begin{lemma}
Given $T_0 : (0^{k+1}1, 0^k1) \rightarrow ((01)^k1, (01)^{k-1}1)$ and a Sturmian sequence $w$ with irrational slope $ x < 1$, (and intercept $0$), then $\overline{w}$ given by $T_0(0w) = 0\overline{w}$ is balanced.
\end{lemma}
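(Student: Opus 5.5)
The plan is to mirror the proof of the previous lemma for $T_1$: induct on the length $n$ of factors of $\overline{w}$ and, assuming a violation of balance, pull it back through $T_0^{-1}$ to a violation of balance in $w$. Here $w$ has slope $x<1$, so $0w$ is a concatenation of blocks $0^{k}1$ and $0^{k+1}1$. Under $T_0$ these become $(01)^{k-1}1$ and $(01)^k1$. Hence $0\overline{w}$ has isolated $0$'s separated by runs $1$ or $11$, and it is a concatenation of blocks $B_k=(01)^{k-1}1$ and $B_{k+1}=(01)^k1$. Each block ends in $11$, or in $1$ when $k=1$, and these endings give the block boundaries. I will argue with the number of $0$'s in a factor rather than the number of $1$'s, since $0$ is now the isolated letter; by the footnote argument of the previous lemma the two formulations are equivalent.

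\textbf{Base cases.} For short factors, of length at most $2$ or within one block, balance is immediate because $0$'s are isolated. For the inductive step, suppose $\overline{u},\overline{v}$ are factors of length $n+1$ with $|\overline{u}|_0=|\overline{v}|_0+2$, while all factors of length $n$ are balanced. Exactly as before, trimming equal end letters forces $\overline{u}=0\overline{u}'0$ and $\overline{v}=1\overline{v}'1$. I will then normalize both factors to unions of whole $T_0$-blocks. Extend $\overline{v}$ on each side to the nearest block boundary. Shrink $\overline{u}$ to the maximal union of whole blocks it contains, or take the symmetric choice of extending $\overline{u}$ and shrinking $\overline{v}$, whichever keeps the inequality. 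A block $B_j$ contains $j-1$ zeros and has length $2j-1$. The number of blocks of $0\overline{w}$ in a factor is therefore governed by the count of $1$'s minus the count of $0$'s, which makes the bookkeeping explicit.

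\textbf{Counting step.} Let the normalized words contain $d_u$ and $d_v$ blocks, with $a$ and $b$ blocks of type $B_{k+1}$ respectively. Then length $=d(2k-1)+2a'$ and zeros $=d(k-1)+a'$, where $a'$ denotes the relevant count $a$ or $b$. The aim is to show that the two normalized factors have the same number of blocks $d$ (or $d_u\ge d_v$) and that $b\ge a+2$ or the reverse. Pulling back by $T_0^{-1}$ gives factors $u,v$ of $0w$ consisting of $d$ blocks each, that is, with equal numbers of $1$'s, and whose zero counts differ by at least $2$. Then shifting by one letter at an end, as at the close of the previous proof, produces two factors of $w$ of equal length whose numbers of $1$'s differ by $2$. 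This contradicts the balance of the Sturmian sequence $w$.

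\textbf{Main obstacle.} The delicate part is the normalization step. Blocks now have two different lengths and internal structure $(01)^j1$, so trimming or extending to block boundaries changes length and zero count by different amounts depending on where the cut falls. I would handle this with a short case analysis on the partial block at each end, whose position is determined by the parity pattern of $01$. The goal of that analysis is to verify that the excess of two zeros in $\overline{u}$ survives as an excess of two long blocks $B_{k+1}$ at equal block count. The case $k=1$, where $B_1=1$ contains no zero, needs a separate check but is simpler.
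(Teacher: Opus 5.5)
Your strategy (minimal counterexample, forced end letters $\overline{u}=0\overline{u}'0$ and $\overline{v}=1\overline{v}'1$, normalization to decodable pieces, pull-back by $T_0^{-1}$, one-letter shift) is the paper's. But the normalization step you flag as the main obstacle is where the content of the lemma lies, and you do not carry it out. Moreover, the option you state first cannot produce what you aim for.

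Put $\delta(y)=|y|_1-|y|_0$, and write $\tilde u,\tilde v$ for the normalized words. A union of whole blocks has $\delta$ equal to its number of blocks, and $\delta(\overline{v})=\delta(\overline{u})+4$. For $k\ge 2$, extending $\overline{v}$ to block boundaries adds:
on the left, $(01)^i0$ or $(01)^{j-1}$, where $B_j$ is the block containing its first letter ($\delta=-1$ or $0$);
on the right, $(01)^i1$ or nothing ($\delta=1$ or $0$).
Shrinking $\overline{u}$ removes:
on the left, $(01)^i1$ or nothing ($\delta=1$ or $0$);
on the right, $(01)^i0$ ($\delta=-1$).
Hence $d_v-d_u\in\{2,3,4,5\}$. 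Equal block counts never occur, and $d_u\ge d_v$ is impossible. After discarding blocks of $\tilde v$ to equalize, the length comparison is governed by the partial pieces and the discarded blocks rather than by the original excess, so $|a-b|\ge 2$ is no longer forced. The hedge ``whichever keeps the inequality'' leaves open exactly the point that needs proof. Also, for $k=1$ the runs of $1$'s in $0\overline{w}$ can be longer than $2$, contrary to your opening description.

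The choice that works, uniformly in $k$ (for $k=1$ the blocks are $1$ and $011$), is the other one: extend the zero-rich factor and shrink the one-rich one.
\begin{itemize}
\item Completing $\overline{u}$ adds $(01)^i$ on the left ($\delta=0$) and $1(01)^m1$ on the right ($\delta=2$). So $\tilde u$ has $d_u=\delta(\overline{u})+2$ blocks and $|\tilde u|\ge L+2$, where $L=n+1$.
\item Shrinking $\overline{v}$ removes some $(01)^i$ on the right ($\delta=0$). On the left it removes a piece $P'$ equal to $1$ or $1(01)^i1$ (or empty when $k=1$), with $e:=\delta(P')\in\{0,1,2\}$ and $|P'|\ge e$.
\item Thus $d_v=d_u+2-e\ge d_u$. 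Discarding $2-e$ end blocks of $\tilde v$ gives $\tilde v'$ with exactly $d_u$ blocks and $|\tilde v'|\le L-e-(2-e)=L-2$.
\item At equal block count the lengths differ by $2(a-b)$, so $a\ge b+2$. Your pull-back and shift then finish the argument.
\end{itemize}
If $\overline{v}$ contains no whole block, then $\delta(\overline{v})\le 2$, which forces $d_u\le 0$; this is absurd.

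For $k\ge 2$ the paper instead parses $\overline{w}$ into the finer units $01$ and $011$, which pull back to $0$ and $001$. Its end corrections are then a letter or two, leading to a four-case check. The block-level count above, once the direction of the normalization is fixed, handles all $k$ at once.
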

\begin{proof} 
We divide the proof into two parts, and in both cases, as in the previous proof, we proceed by induction on the length $n$ of factors of $\overline{w}$.\vskip2mm
\noindent
\emph{First case}: $\lfloor x \rfloor = 1$; that is, $k=1$ and $w$ is of the form\footnote{The $\scalebox{1.5}{+\!\!\!+}$ symbol, used for list concatenation in Haskell, is used here, with an abuse of notation, for infinite concatenations like the symbol $\sum$ would be used. } ${\scalebox{1.5}{+\!\!\!+}}_{i\in\N} (0^{s_i}1)_i$, with $s_i = 1$  or $2$.
We can observe that, for the $w$ under consideration,  $T_0 : (001, 01) \rightarrow (011, 1)$. Then, for $n= 1, \, 2$ and $3$ it is trivial that the difference in the number of $0$'s between two factors is at most $1$.\\
Assume that the statement holds for some $n>3$, and let us prove it for $n+1$.\\
Suppose, by contradiction, that it does not hold; that is\footnote{As in the proof above.}, there exist two factors of the form $1\overline{u}1$ e $0\overline{v}0$ with $|1\overline{u}1|_1 = |0\overline{v}0|_1 + 2$ and $|1\overline{u}1|_0 = |0\overline{v}0|_0 - 2$. We know that each $0$ must be followed by at least two $1$'s, thus we can consider the factors $0\overline{v}011$ and $\overline{u}1$. Hence $|0\overline{v}011|_1= |\overline{u}1|_1 + 1 = a$, and $|0\overline{v}011|_0=|\overline{u}1|_0 + 2 = b$.\\
Considering $T_0$ and the given $w$, we have that, via $T_0^{-1}$, each $(01)$ corresponds to $0$ and all the remaining $1$ corresponds to $01$. 
Then, we get $T_0^{-1}(\overline{u}1) = 0u$ and $T_0^{-1}(0\overline{v}011) = v1$ with $|v1|_0 = | 0u|_0 + 1= a$, $|0u|_1 = a - 1 - (b-2) = a - b + 1$, and $|v1|_1 = a-b$. Hence $|0u| = 2a-b =  |v1|$. Now, considering the two factors $u$ and $v$, we have $|u| = |v|$ with $|u|_0 = |v|_0 - 2$, which is absurd because it contradicts the hypothesis that $w$ is a Sturmian sequence and as such should be balanced.
\vskip2mm
\noindent
\emph{Second case}: $\lfloor x \rfloor \geq 2$; that is, $k \geq 2$ and $w$ is of the form ${\scalebox{1.5}{+\!\!\!+}}_{i\in\N} (0^{s_i}1)_i$, with $s_i = k$  or $k+1$ and $\overline{w} = {\scalebox{1.5}{+\!\!\!+}}_{j\in\N} (01^{t_j})_j$, with $t_j=1$ or $2$, i.e.: it will be a semi-infinite sequence composed of subwords $01$ and $011$.
Then, for $n= 1, \, 2$ and $3$ it is trivial that the difference in hte number of $0$'s between two factors is at most $1$.\\
Assume that the statement holds for some $n>3$, and let us prove it for $n+1$.\\
Suppose, by contradiction, that it does not hold; again, we would have two factors of the form $1\overline{u}1$ e $0\overline{v}0$ with $|1\overline{u}1|_1 = |0\overline{v}0|_1 + 2$ and $|1\overline{u}1|_0 = |0\overline{v}0|_0 - 2$. We then consider the factors $01^t1\overline{u}11^s$, with $t,s= 0$ or $1$, obtained by extending the blocks of $1$'s in the prefix and suffix, and $0\overline{v}01$, so that $|0\overline{v}01|_1 = |01^t1\overline{u}11^s|_1 -1 -t -s= a$ and $|0\overline{v}01|_0 = |01^t1\overline{u}11^s|_0 + 1= b$. \\
Considering $T_0$ and the given $w$, we have that, via $T_0^{-1}$, each $(01)$ corresponds to $0$ and all the remaining $1$ corresponds to $01$. Then, considering $T_0^{-1}(0\overline{v}01)=v$ and $T_0^{-1}(01^t1\overline{u}11^s) = u$, we get $|v|_0 = a$, $|v|_1 = a-b$, $|u|_0 = a +1 + t +s $, and $|u|_1 = a +2 + t +s - b$.  
But, since $0\overline{v}01$ ends in $01$,  whether it is followed by $0$ or by $1$, we have that $v$ is always followed by another $0$. Thus, $|u|_0 - |v0|_0 = t + s$ and $|u| = |v0| + 2 +2t +2s$.
We now have four cases:
\begin{enumerate}
\item if $(t,s)=(0,0)$ we have $u = T_0^{-1}(01\overline{u}1) = 00u_1$, so that\\ $|v0|_0 - |u_1|_0 = 2$ with $|u_1| = |v0|$;
\item if $(t,s)=(0,1)$ we have $u = T_0^{-1}(01\overline{u}11) = 00u_{2}01$, so that\\ $|v0|_0 - |u_2|_0 = 2$ with $|u_2| = |v0|$;
\item if $(t,s)=(1,0)$, we have $u = T_0^{-1}(011\overline{u}1) = 0010u_3$, so that\\ $|v0|_0 - |u_2|_0 = 2$ with $|u_3| = |v0|$;
\item if $(t,s)=(1,1)$, we have $u = T_0^{-1}(011\overline{u}11) = 00100u_4$, so that\\ $|v0|_0 - |u_4|_0 = 2$ with $|u_4| = |v0| + 1$.
\end{enumerate}
All four results are absurd, since the hypothesis states that $w$ is a Sturmian sequence and, as such,  balanced.
\end{proof}

\vspace{0.5cm}
\noindent
Now we can finally prove the Theorem \ref{extensionT}.\\

%\vspace{0.01cm}
\noindent
\begin{proof}
When considering $T_0$, we have that the slope of $w$ is $x<1$. We call $a_n$ the number of $1$'s in the first $n$ blocks of $w$, and $b_n$ the number of $0$'s. For each $0$ in $w$ we get a $1$ in $\overline{w}$, and for all $0$'s, except those followed by a $1$, we get a $0$ in $\overline{w}$. That means that the ratio between $1$'s and $0$'s in $\overline{w}$ is given by:
$$
\frac{b_n}{b_n - a_n} = \frac{1}{\frac{b_n - a_n}{b_n}} =  \frac{1}{1 - \frac{a_n}{b_n}}
$$
and, by considering the limit, we get
$$
\lim_{n \to \infty} \frac{1}{1 - \frac{a_n}{b_n}} = \frac{1}{1 - x} = R(x)
$$
On the other hand, considering $T_1$, we have that the slope of $w$ is $x>1$ and the value $k=\lfloor x \rfloor$. In the first $n$ blocks of $w$, we have exactly $n$ $0$'s, and we have $p$ times $k$ $1$'s, and $q$ times $k+1$ $1$'s, with $p+q=n$. 
For each $k$ block of $1$'s in $w$, we get $k+1$ $0$'s in $\overline{w}$, and for each $k+1$ block of $1$'s, we get $k$ $0$'s, while for each block of any kind in $w$ we get exactly one $1$ in $\overline{w}$. Thus, the ratio between $1$'s and $0$'s in $\overline{w}$ is given by:
$$
\frac{n}{p(k+1) + q(k)}=\frac{n}{p(\lfloor x \rfloor+1) + q(\lfloor x \rfloor)} = \frac{1}{\frac{p}{n}(\lfloor x \rfloor+1) + \frac{q}{n}(\lfloor x \rfloor)} = \frac{1}{\lfloor x \rfloor + \frac{p}{n}}
$$
Now, considering that
$$
x = \lim_{n \to \infty} \frac{a_n}{b_n} = \lim_{n \to \infty} \frac{p(k) + q(k+1)}{n} = k + \lim_{n \to \infty} \frac{q}{n}
$$
we have that $\frac{q}{n}$ tends to $\{x\}$, hence $\frac{p}{n}$ tends to $1 - \{x\}$, and we get
$$
\lim_{n \to \infty} \frac{1}{\lfloor x \rfloor + \frac{p}{n}} =  \frac{1}{\lfloor x \rfloor +1 - \{x\}} = \frac{1}{1 - x +2\lfloor x \rfloor} = R(x)
$$
Thus, the ratio between $1$'s and $0$'s in $0\overline{w}=T(0w)$ is irrational; hence the sequence is aperiodic and, since we have shown in the two lemmas above that it is also balanced, it follows that is a Sturmian sequence.
\end{proof}
\remark ({\bf Connection with S-adic systems})
\vskip 0.1cm
\noindent
On the permuted tree $\hat {\cal T}$ one can introduce a symmetric random walk $(Z_k)_{k\geq 1}$ in the following way: set $Z_1=\frac 11$ and if $Z_k=\frac pq$ then either $Z_{k+1}=\frac p{p+q}$ or  $Z_{k+1}=\frac {p+q}{q}$, both with probability $\frac 12$. In \cite{BI} it is proved that this process enters any non empty interval $I=(a,b)\subset \R_+$ almost surely (Thm. 1.12) and, more specifically, it does it with asymptotic frequency $\rho (I)=\int_a^b d\rho (x)$ (Corollary 3.7), where 
$\rho: {\overline \R}_+\to [0,1]$ encodes the infinite path of $x \in {\overline \R}_+$ by interpreting it as the binary expansion of a real number in $[0,1]$. Differently said, $\rho (0)=0$, $\rho (\infty)=1$ and, if $x=[a_0; a_1, a_2, \dots]$, then 
\be\label{rho}
\rho (x)=0\; .\;{\underbrace
{11\dots 1}_{a_0}}\, {\underbrace {00\dots 0}_{a_1}}\;{\underbrace
{11\dots 1}_{a_2}}\; \cdots 
\ee
A similar study can be pursued on the permuted tree $\hat {\cal F}$, starting from the observation that the substitutions $S_0$ and  $S_1$ defined in Lemma \ref{substi}, whose incidence matrices coincide with $A$ and $B$, define a so called S-adic system (see \cite{Qu}, pp. 87-109, and  \cite{BD}), which, however, are rarely considered as generating a random process. For an interesting analysis of the spectral properties of S-adic random system arising from an i.i.d. sequence of unimodular substitutions, see \cite{So}. 
Besides, it would be also interesting to study the dynamics induced by the map $T$ defined in Thm. \ref{fm} from a statistical point of view (see the next Section for some results for the map $R$).

\vskip 0.2cm
\noindent

\remark({\bf FC words and musical scales})

\vskip 0.1cm
\noindent
FC words that are dual to one another deserve an important role in the theory of well-formed scales in music theory \cite {CC} (see also \cite{I}). Loosely speaking, we first say that a scale is {\sl generated} if its elements can be obtained by an iterated application of a generator\footnote{ Western music, since its Greek origins, has primarily used the fifth interval as a generator of harmonic systems.}, i.e. a fixed transposition on a given pitch class, and then we say that a generated scale is {\sl well-formed}, if each generating interval spans the same number of scale steps (including the return to origin interval).
A remarkable property brought into light by the recent developments in music and combinatorics on words \cite {DCN} starts from the observation that, for example, the FC word $w=0001001$, corresponding to the fraction 2/5, is the sequence of intervals corresponding to the ancient mixolydian (descending) mode B'-A-G-F-E-D-C-(B) (or else to the ascending lydian mode as a medieval ecclesiastical mode), where 0 stands for a tone and 1 for a semi-tone. If we now take the slope 4/3, where 4 and 3 are the multiplicative inverses of respectively 2 and 5 modulo 7, the dual FC word ${\hat w}=0101011$ corresponds to the same mode B'-E-A-D-G-C-F-(Bb) but in a different presentation, where now 0 stands for a descending perfect fifth (the generator) and 1 for an ascending perfect fourth (the generator's complement within the octave), so that the pitches reached thereby all lie within the octave under the initial B'. The two presentations are respectively called the {\sl scale-step pattern} and the {\sl scale folding} of the mode. The other seven diatonic modes forming of the diatonic 7-notes family can be obtained from this mode by conjugation, where we say that two elements $w$ and $w'$ of $\{0,1\}^*$ are {\sl conjugate} if there exist words $u$ and $v$ such that $w=uv$ and $w'=vu$ (or equivalently if they are conjugated in the free group $<0,1>$).
\vskip0.2cm
\begin{figure}[h!]
\begin{center}
\includegraphics[width=12.0cm, trim = 1mm 65mm 1mm 75.7mm, clip]{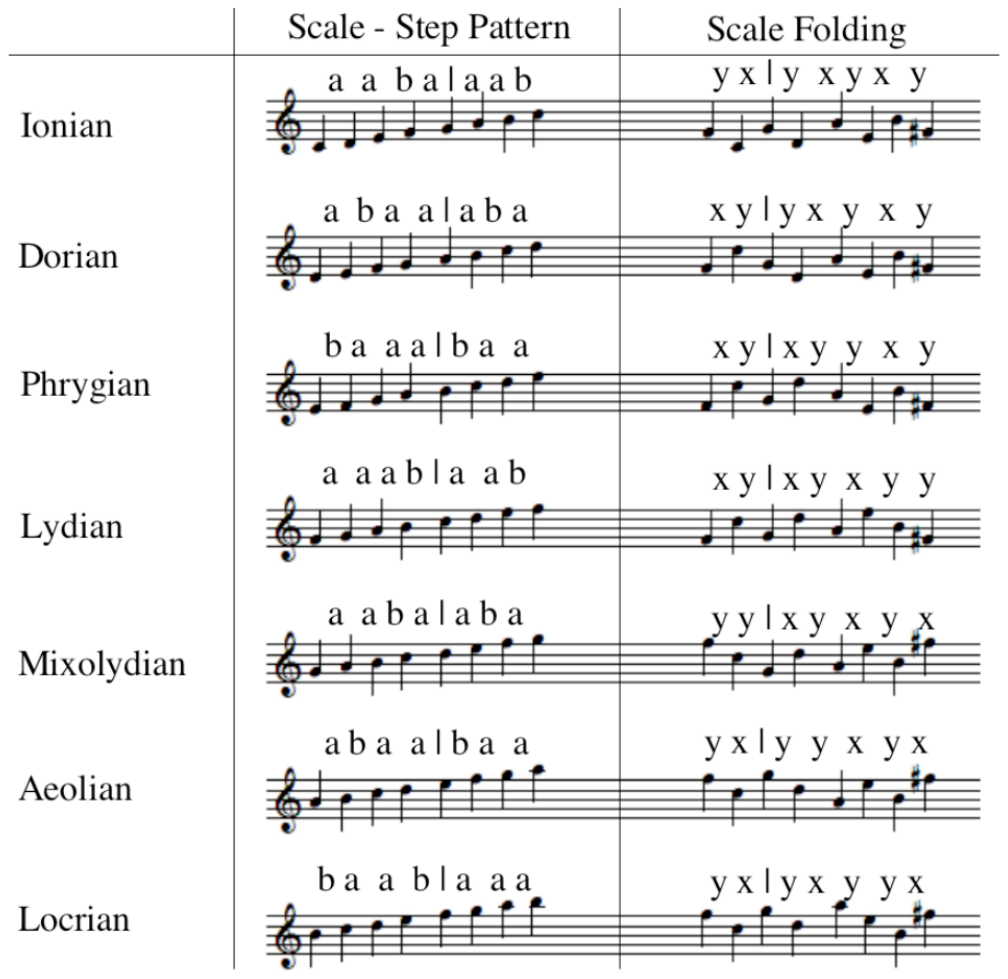}
\caption{}
\label{fig:scales}
\end{center}
\end{figure}
\noindent
Figure \ref{fig:scales} (Figure 8 of Thomas Noll's paper \cite{No}) shows the musical folding of each (ecclesiastical) diatonic mode displayed with their corresponding scale step pattern. In the table, which is an instance of Farey-Christoffel duality, the symbol $a$ stands for a tone, while $b$ for a semi-tone, whereas $x$ is an ascending fifth and $y$ a descending fourth.

\noindent
In the same vein can be treated other musical scales, such as the pentatonic scales (starting from the scale-step pattern 01011, whose dual is 00101), or the so called `tetractys' (starting with 011, which is self-dual). This quick sketch can hopefully give a sense of the richness lying in the folds of the interaction between these domains. One interpretation of this richness may come from thinking of the FC words as divisions into ``almost equal'' parts (cf. section 17.3 in \cite{Re}), in the following sense: if $d < n$ are relatively prime, then $n=dq+r$ with positive remainder $r$. Therefore $n$ is not divisible into $d$ equal \emph{integer} parts. On the other hand,  the second-best solution is to divide $n$ into $d-r$ equal parts of size $q$, and the remaining $r$ parts of size $q+1$. By writing these parts as a word of length $d$, as evenly as possible, one obtains a FC word (cf. the geometric interpretation presented at the beginning of Section \ref{sec:cutting} and in Figure \ref{fig:cuttingseq}).
%%%%%%%%%%%%%%%%%%%%%%%%%%%%%%
\section{Ordering and dynamical systems}\label{dw}
\vskip 0.2cm
\noindent
We shall now discuss some further aspects of the relation between the c.f.e. of a given element of $x\in \cal T$ and its FC word $w\in \cal F$. 
To this end we recall that any FC word $w$ of length $n$ can be written in the form shown in \eqref{isolated1} or \eqref{isolated0} depending on its slope (cf. Section \ref{motions}).\\
Then, we can construct a {\sl derived word} $w'$ via the following algorithm: suppose that the slope $p/q$ of $w$ is smaller than one and its value is $k$ (that is $[q/p]=k$). Then the symbol $1$ is isolated and we perform the substitution
$0\to 0$ and $0^k1 \to 1$. If, instead, the slope $p/q$ is larger than one, and $[p/q]=k$, then the symbol $0$ is isolated and we perform the substitution $1\to 1$ and $01^k \to 0$. We keep iterating this procedure until we end up with a single symbol, $0$ or $1$, while recording the values $a_0, a_1, \dots, a_n$ of the derived sequences\footnote{If the slope of the initial sequence $w$ is smaller than one we set $a_0=0$. On the other hand the value of a single symbol can be taken to be $\infty$ (as it seems natural when passing to infinite sequences by indefinite repetition of the finite string).}. We have the following:
\begin{proposition}\label{codaggio1}
Let $x\in \cal T$ and $w\in \cal F$ be the corresponding FC word. The values of the successively derived words $w', w'', \dots$ coincide with the partial quotients of the c.f.e. of $x$. 
\end{proposition}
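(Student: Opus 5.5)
We argue by induction on the depth of $x$ (equivalently, on the length of $w$). The core claim is that a single derivation step acts on slopes like one step of the Gauss map. Precisely, if $w$ has slope $x=p/q=[a_0;a_1,\dots,a_n]$, then its value is $a_0$ when $x>1$ and $a_1$ when $x<1$. Moreover, the derived word $w'$ is again a FC word, and its slope $x'$ has continued fraction expansion equal to that of $x$ with the first nonzero partial quotient removed (suitably reindexed). Iterating this reads off $a_0,a_1,\dots,a_n$ in order. The convention $a_0=0$ for slope $<1$ accounts for the missing integer part. The final single symbol ($0$ or $1$, slope $0$ or $\infty$) terminates the process.

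For the value, note first that by \eqref{isolated0} the blocks $1^{n_i}$ have $n_i\in\{[p/q],[p/q]+1\}$, so the value is $[p/q]=a_0$. Likewise, by \eqref{isolated1} the value for $x<1$ is $[q/p]=[1/x]=a_1$.

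The key step is to identify $w'$. I would invert the substitutions of Lemma \ref{substi}. Take the case $x>1$ with value $k$. Then $w=0\,1^{n_1}\cdots 0\,1^{n_q}$, and $k$ applications of $S_1^{-1}$ (each deletes one $1$ after every $0$) give $w^{(k)}=0\,1^{n_1-k}\cdots 0\,1^{n_q-k}$. This is a FC word of slope $x-k=F^k(x)$, because $S_1$ corresponds to the descendant move $\frac pq\mapsto\frac{p+q}{q}$. In $w^{(k)}$ every $1$ is isolated, so it has slope $<1$. The derivation rule $01^k\to 0$ turns $w$ into exactly $w^{(k)}$ with the symbol roles kept. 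Hence $w'=w^{(k)}$, with slope $\{x\}$, whose c.f.e. is $[0;a_1,\dots,a_n]$. Its value is therefore $a_1$, by the previous paragraph.

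The case $x<1$ is symmetric. We apply $S_0^{-1}$ $k$ times (each deletes one $0$ before every $1$), obtaining a FC word of slope $p/(q-kp)$. However, the derivation rule maps $0^k1\to1$, which produces $0^{n_i-k}1$, exactly $S_0^{-k}(w)$. Its slope is $x/(1-kx)$, whose reciprocal is $1/x-k=[a_2;a_3,\dots]$ after the shift. Equivalently, this word's slope has c.f.e. $[a_2;a_3,\dots,a_n]$ (or it lies in the other regime with value $a_2$). Verifying this bookkeeping via the reversal/duality rules of Section \ref{dual} is routine.

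The main obstacle I expect is the exact alignment of indices: after a slope-$>1$ step the new word has slope $<1$ and value $a_1$, while after a slope-$<1$ step one must check whether the integer part $a_2$ appears as a value directly. We need to confirm that the alternation of the two regimes matches the alternation of $A$ and $B$ blocks in Proposition \ref{codaggio}. There is also the terminal edge case, where $a_n$ versus $a_n-1$ and the last step produces the single symbol, with value $\infty$ by convention. I would handle both by checking small depths explicitly (e.g. the example $3/5=[0;1,1,2]$ with $w=00100101$) and by using $S_0,S_1\leftrightarrow A,B$ together with Lemma \ref{uno}. The total number of deletions performed equals the depth $\sum a_i$, which pins down the last quotient.
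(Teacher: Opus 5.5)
Your argument is correct and is essentially the paper's own: the derivation step is exactly $w'=S_1^{-k}(w)$ (slope $>1$) or $w'=S_0^{-k}(w)$ (slope $<1$), so by Lemma \ref{substi} the slope goes to $F^k(x)$, which on the continued fraction expansion deletes the first nonzero partial quotient; you spell out at the level of words what the paper states for slopes. One slip to fix: for $x<1$ one has $1/x-k=[0;a_2,\dots,a_n]$, and it is the new slope $x/(1-kx)$ itself that equals $[a_2;a_3,\dots,a_n]$ (value $a_2$, or the single symbol $1$ with value $\infty$ when $n=1$), so the index alignment you list as the main obstacle is already settled by your induction on depth and needs neither the duality of Section \ref{dual} nor checks of small cases.
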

\begin{proof}
The proof amounts to noting that the reduction procedure corresponds to repeated applications to the slope of the map \eqref{effe} $F:\R_+\to \R_+$ given by
\begin{equation*}
F: x\mapsto  \left\{
  \begin{array}{ll} {\displaystyle \frac{x}{1-x}}\; , & \;   0 \leq x \leq 1\\ [0.3cm]
 x-1 \; , & \; x>1 \end{array} \right.
 \end{equation*}
 whose action of c.f.e.'s is\footnote{In the first case, if $a_1=1$ one sets $[0;a_1-1,a_2,\dots]=[a_2;a_3,a_4,\dots]$.}
 \be\label{faction}
F: [a_0;a_1, a_2, \dots] \mapsto \left\{
  \begin{array}{ll} [0;a_1-1,a_2,\dots]\; , & \;   a_0=0\\ [0.3cm]
 [a_0-1;a_1,a_2,\dots] \; , & \; a_0>0 \end{array} \right.
 \ee
More precisely, if $w$ has slope $x$ and value $k$ then the derived sequence $w'$ has slope $F^k(x)$, and value either $[F^k(x)]$ or $[1/F^k(x)]$. 
\end{proof}

\vskip 0.5cm
\noindent
{\sc Example.}  For $p/q=3/5=[0;1,1,2]$ and $w=00100101$ we get the following table.
\vskip 0.5cm
\begin{center}
\begin{tabular}{|c|c|c|c|}
\hline
derivation step & FC word & slope & value \\
\hline
0 & $00100101$ & $3/5$  &$1$ \\ 
1 & $01011$ & $3/2$ & $1$ \\ 
2 & $001$ & $1/2$ & $2$ \\ 
3 & $1$ & $1/0$  & $\infty$ \\ 
\hline
\end{tabular}
\end{center}
\vskip 0.5cm

\vskip 0.2cm
\noindent
Now, any $\frac pq\in \cal T$ of depth $d\geq 1$ is the descendant of another fraction $\frac {p'}{q'}\in \cal T$ of depth $d-1$, which we call its {\sl antecedent}, given by the following rule: if $p>q$ then $q'=q$ and $p'=p-q$; if instead $q>p$ then $p'=p$ and $q'=q-p$. Differently said, $\frac {p'}{q'}=F(\frac {p}{q})$.
Therefore, according to what we have said in Section \ref{dual}, the binary coding $\sigma (x)=\sigma_1\cdots \sigma_{k}$ of an element $x\in \cal T$ of depth $k+1$ can be computed in terms of the symbolic orbit of $x$ with the map $F$:
\be\label{F}
\sigma_i(x)= \left\{
  \begin{array}{ll}  0\; , & \;  F^{i-1}(x) \leq 1\, ,\\ [0.3cm]
1 \; , & \; F^{i-1}(x) >1\, , \end{array} \right. \qquad i=1, \dots, k
\ee 
This rule can be immediately checked for the already discussed example $x=3/5$. For a less trivial example consider the fraction $x= 65/19$, whose c.f.e. is $[3;2,2,1,2]$. It has depth $3+2+2+1+2=10$ and from Proposition \ref{codaggio} its symbolic coding is $\sigma (x)=111001101$. 
Without knowing the c.f.e. this binary sequence can be obtained from the antecedents, i.e. the $F$-images of $x$ till the root of $\cal T$. They are
$$
\frac {65} {19} , \quad \frac {46} {19} ,  \quad \frac {27} {19} ,  \quad \frac {8} {19} , \quad \frac {8} {11} , \quad \frac {8} {3} , \quad \frac {5} {3} , \quad 
\frac {2} {3} , \quad \frac {2} {1} , \quad \left(\frac {1} {1} \right)
$$
and one easily checks that the sequence obtained applying rule (\ref{F}) is just $\sigma (x)$ written above.
\vsni
\noindent 
We have said that the tree $\cal T$ enumerates the positive rationals, but what is the ordering induced on $\Q_+$?
Denoting again with $r_n$ the $n$-th rational number obtained by `reading' $\cal T$ row by row, from left to right, starting from the root, we have
$$
r_1=\frac 1 1, \; r_2=\frac 1 2, \; r_3=\frac 2 1, \; r_4=\frac 1 3, \; r_5=\frac 2 3, \; r_6=\frac 3 2, \; r_7=\frac 31, \; r_8=\frac 1 4, \; \cdots
$$
The general rule is in the following:
\begin{theorem}\label{regola}  Given $1\ne  x\in \cal T$, let $\sigma (x)=\sigma_1\cdots \sigma_{k}$ be its binary coding. Then we have $x=r_n$ with $n=2^{k}+\sum_{l=1}^{k} \sigma_l 2^{k-l}$.
\end{theorem}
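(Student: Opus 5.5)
The plan is to read the SB tree $\cal T$ as a complete binary tree in which each node is determined by its descending path from the root $\frac 11$. Here $\sigma(x)$ records that path: $0$ for the move by $A$ (down-left) and $1$ for the move by $B$ (down-right), by Proposition \ref{codaggio} and the discussion after it. The statement then says that breadth-first numbering of a complete binary tree assigns to the node at path $\sigma_1\cdots\sigma_k$ the number whose binary expansion is $1\sigma_1\cdots\sigma_k$. Indeed, $2^k+\sum_{l=1}^k\sigma_l2^{k-l}$ is exactly the integer written $1\sigma_1\sigma_2\cdots\sigma_k$ in base $2$.

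First I fix the depth bookkeeping. The root $\frac11$ has depth $1$ and $\sigma(\frac11)=\epsilon$. A node at depth $k+1$ has a path of length $k$, consistent with the example $65/19$, which has depth $10$ and $|\sigma|=9$. Since each node has exactly two children and $\cal T$ is in bijection with $\Q_+$, level $k+1$ contains exactly $2^k$ nodes. Hence the nodes at depth $k+1$ occupy the reading positions $n=2^k,\dots,2^{k+1}-1$, because the first $k$ levels contain $1+2+\cdots+2^{k-1}=2^k-1$ nodes.

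Second, I have to show that within a level the left-to-right order coincides with the lexicographic order of the paths, where $0<1$. I would argue by induction on $k$, using the fact, stated in the preliminaries, that the mediant lies strictly between its parents. Suppose the nodes of one level are ordered increasingly as real numbers, with consecutive ones separated by ancestors. Every node at depth $k+1$ lies strictly inside the Farey interval spanned by its two parents. The children of a node $y$ are the mediants of $y$ with its left and right parent respectively, so the left child (via $A$) is smaller than $y$ and the right child (via $B$) is larger than $y$. Moreover, all descendants of $y$ stay inside the open interval between $y$'s parents. These intervals are pairwise disjoint for distinct nodes on the same level, since they are the intervals between consecutive elements of the Farey-type sequence formed by all nodes of depth $\le k$. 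So the children of distinct nodes $y<y'$ at depth $k$ satisfy: every child of $y$ is less than every child of $y'$. This is precisely lexicographic ordering by the path, with the last symbol deciding between siblings. The paper reads rows "from left to right," which is by increasing value, matching Figure 1.

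Finally I combine the two steps. Among the $2^k$ paths of length $k$ ordered lexicographically, the path $\sigma_1\cdots\sigma_k$ has rank $\sum_l\sigma_l2^{k-l}$, counting from $0$. Adding the offset $2^k$ from the first step gives $n$. A check against $r_2=\frac12$ ($\sigma=0$, $n=2$) and $r_6=\frac32$ ($\sigma=10$, $n=6$) confirms the convention.

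The main obstacle is the second step: making precise that the intervals between consecutive nodes of depth $\le k$ nest, so that descendants of different nodes never interleave. I would handle it with a short induction asserting that the nodes of depth $\le k+1$, sorted by value, are obtained from those of depth $\le k$ by inserting a mediant between each consecutive pair (Stern–Brocot/Farey insertion). This follows directly from the construction, once one notes that each consecutive pair forms a Farey pair.
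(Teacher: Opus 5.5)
Your proof is correct, but it takes a different route from the paper's. The paper never compares orders on $\mathcal{T}$ directly. It passes to the permuted tree $\hat{\mathcal{T}}$ and uses the result of \cite{BI} that $\hat{\mathcal{T}}$, read row by row, is the orbit $R^{n-1}(1)$. It then conjugates $R$ via $\rho$ to the dyadic odometer $K$ and identifies $K^{n-1}(\frac12)$ with the van der Corput sequence. Since $\rho$ sends a rational with path $\sigma_1\cdots\sigma_k$ to $0.\sigma_1\cdots\sigma_k1$, and $t_n$ reverses the binary digits of $n$, the dual element $\widehat{r_n}=R^{n-1}(1)$ has the reversed digits of $n$ as its path. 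Undoing the reversal gives the formula. You instead view $\mathcal{T}$ as a complete binary tree and write the breadth-first index as the offset $2^k$ plus the lexicographic rank of the path. You justify this by showing that left-to-right order on a level is numerical order, hence lexicographic order of paths. Your argument is elementary and self-contained: it uses only the betweenness of mediants and the fact that $A$, $B$ give the left and right child. The paper's argument exhibits the enumeration as an odometer orbit, which also yields the bit-reversed index $\hat n$ of the dual element and ties the theorem to the $F$--$R$ dynamics. The cost is importing results from \cite{BI} and leaving the computation of $\rho$ on rationals implicit.

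Your Step 2 can be shortened. For two distinct paths of equal length, let $y$ be the node at their longest common prefix. The path continuing with $0$ ends strictly between $y$'s left parent and $y$, and the one continuing with $1$ ends strictly between $y$ and its right parent. So $y$ separates them, and no global disjointness claim is needed. As you stated that claim, it should refer to gaps between nodes of depth $\le k-1$, with $\frac01$ and $\frac10$ adjoined, not depth $\le k$.
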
 
\noindent
{\sc Example.} The number $x= 65/19$ yields $n=2^{9}+2^{8}+2^7+2^6+2^3+2^2+2^0=973$, namely $65/19$ is the nine hundred  seventy-third rational number in the Stern-Brocot ordering. 

\vskip 0.2cm
\noindent 
\begin{proof} 
Let $r_{\hat n}$ be the element of the permuted tree $\hat {\cal T}$  corresponding to $r_{n} \in \cal T$ (or else $r_{n}$ and $r_{\hat n}$ are dual elements in $\cal T$). Then $n=2^{k}+\sum_{l=1}^{k} \sigma_l 2^{k-l}$ if and only if ${\hat n}=2^{k}+\sum_{l=1}^{k} \sigma_{l} 2^{l-1}$. According to the above, it holds $r_{\hat n}=R^{n-1}(1)$ (or else $r_{n}=R^{{\hat n}-1}(1)$), where $R$ is the map defined in (\ref{zagier}).
Furthermore, an easy adaptation of (\cite{BI}, Theorem 2.3) shows that $R$ is topologically conjugated with the dyadic odometer (or von Neumann-Kakutani transformation \cite{VN}) $K:[0,1]\to [0,1]$, given by $K(1)\coloneqq 0$ and
$$
K(x)\coloneqq  x+ \frac 1 {2^{n-1}}+ \frac 1 {2^{n}} -1 \quad , \quad 1-\frac 1 {2^{n-1}} \leq x < 1-\frac 1 {2^{n}} \quad , \quad n\geq 1,
$$
via the map $\rho$ defined in (\ref{rho}), i.e.
\be\label{conju}
R = \rho^{-1} \circ K \circ \rho \, .
\ee
Finally, it is well known (see, e.g., \cite{KN}) that the map $K$ can be used to generate the Van der Corput sequence $\omega=(t_n)$, defined as follows: set first $t_1= 1/2$. Then, given $n\geq 2$, let $n=2^k+\sum_{l=1}^k s_l 2^{l-1}$ be its dyadic expansion and set $t_n=2^{-k-1}+\sum_{l=1}^k s_l 2^{-l}$. The first terms of $\omega$ are 
$$
t_1=\frac 1 2, \; t_2=\frac 1 4, \; t_3=\frac 34, \; t_4=\frac 1 8, \; t_5=\frac 5 8, \; t_6=\frac 3 8, \; t_7=\frac 7 8, \; t_8=\frac 1 {16}, \; \cdots
$$
Accordingly, we have $t_n=K^{n-1}(1/2)$, $n\geq 1$, and one readily gets the claim.
\end{proof}

\begin{remark} Note that the forward orbit of $1$ with $R$ is dense in $\R_+$, but it grows only logarithmically, as $R^{2^n-2}(1)=n$.
Moreover, according to \cite{CW} and \cite{Ne}, the following representation is in force: $R^{n}(1)=b(n)/b(n+1)$, $n\geq 0$, where $b(n)$ is the number of {\sl hyperbinary} representations of $n$, that is the
number of ways of writing the integer $n$ as a sum of powers of 2, each power being used at most twice. For instance $8=2^3=2^2+2^2=2^2+2+2=2^2+2+1+1$ and thus  $b(8)=4$.
\end{remark}

\noindent
The two maps $F$ and $R$ introduced above satisfy the following remarkable commutation rule:

\begin{proposition}\label{commu}  
For all $x \in \R_+$ we have $$R^m\circ F^n (x) = F^n \circ R^{2^nm} (x), \qquad n,m\geq 1$$
\end{proposition}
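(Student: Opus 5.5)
Via the conjugacy \eqref{conju}, $R=\rho^{-1}\circ K\circ\rho$, both maps become operations on binary expansions, where the identity reduces to an elementary arithmetic fact. I will first show that $F$ is conjugate through $\rho$ to the doubling map. If $x=[a_0;a_1,a_2,\dots]$, then by \eqref{faction} $F$ either lowers $a_0$ by one (when $a_0>0$) or lowers $a_1$ by one (when $a_0=0$). In the binary string \eqref{rho} this amounts to deleting the first digit of $\rho(x)$, which is a $1$ when $a_0>0$ and a $0$ when $a_0=0$. Hence
$$\rho\circ F=D\circ\rho,\qquad D(t)=2t \ ({\rm mod}\ 1),$$
with the conventions $\rho(0)=0$ and $\rho(\infty)=1$. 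One must also check that $D$ sends $1$ to $1$ while $F(\infty)=\infty$. The footnote convention for $a_1=1$ is exactly the merging of blocks, so it causes no trouble. Irrational $x$ give infinite expansions; rational $x$ terminate, and one fixes the choice of binary expansion consistent with \eqref{rho}.

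After this conjugation, the claim $R^m\circ F^n=F^n\circ R^{2^n m}$ becomes
$$K^m\circ D^n=D^n\circ K^{2^n m}$$
on $[0,1]$. This step is where the argument really lives. $K$ is the dyadic odometer: it adds $1$ to the sequence of binary digits, read with the first digit as least significant and the carry propagating to the right. In other words, if $t=\sum_{l\ge1} s_l2^{-l}$ is identified with the $2$-adic integer $\sum_l s_l 2^{l-1}$, then $K$ is $z\mapsto z+1$. $D^n$ deletes the first $n$ digits, which corresponds to $z\mapsto \lfloor z/2^n\rfloor$ in $\Z_2$. The identity then follows from
$$\left\lfloor \frac{z+2^n m}{2^n}\right\rfloor=\left\lfloor \frac{z}{2^n}\right\rfloor+m .$$
Put directly: adding $2^n m$ leaves the first $n$ digits untouched and adds $m$ to the tail, with carries behaving identically on both sides.

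The main obstacle is the boundary bookkeeping. First, $K(1)=0$ and the points with all digits eventually equal to $1$ have two possible representations. Second, $R(\infty)=0$ and $R(0)=1$. Third, rationals $x$ correspond to dyadic rationals $\rho(x)$, which have two binary expansions. For all other points the $2$-adic identity applies directly. For the dyadic points I will fix the expansion convention implicit in \eqref{rho}, in which a rational's path ends after finitely many digits, and verify that $K$ as defined in the paper matches adding $1$ on that representation, including the wrap $1\mapsto 0$ corresponding to $\dots111+1=\dots000$. If the convention creates a mismatch, I will instead extend by continuity: $R$ is continuous away from the integers, and at the finitely many problematic points in each orbit I will check directly from \eqref{zagier} and \eqref{effe}. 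Alternatively, the rational case can be done combinatorially, using the fact that $R$ moves along rows of $\hat{\cal T}$, $F$ moves up one level, and a row at depth $d+n$ lying below a node at depth $d$ has $2^n$ descendants per ancestor. The irrational case then follows by density of the rationals and continuity.
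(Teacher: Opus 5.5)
Your route differs from the paper's. The paper checks the single relation $R\circ F=F\circ R^{2}$ directly, either from \eqref{zagier} and \eqref{effe} or via \eqref{faction} and the action of $R$ on continued fractions. It then gets all $(n,m)$ by two trivial inductions: first $R^mF=FR^{2m}$, then $R^mF^{n+1}=FR^{2m}F^{n}=F^{n+1}R^{2^{n+1}m}$.

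You instead conjugate by $\rho$ to the odometer and the doubling map and work in the $2$-adic integers, where the claim becomes $\lfloor (z+2^nm)/2^n\rfloor=\lfloor z/2^n\rfloor+m$. This treats all $(n,m)$ at once and explains the factor $2^n$: $D^n$ discards the $n$ lowest-order digits, so adding $2^nm$ before it amounts to adding $m$ after it. It rests on \eqref{conju}, which the paper only imports. You can verify it just as you verified \eqref{conju2}: on $\rho(x)$, the continued-fraction action of $R$ turns a leading $1^{a_0}0$ into $0^{a_0}1$, or a leading $0$ into $1$. Your derivation of $\rho\circ F=D\circ\rho$ from \eqref{faction}, including the block merging when $a_1=1$, is correct. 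So for irrational $x$ your proof is complete.

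The gap is at rational $x$, and it is not mere bookkeeping: with \eqref{effe} as printed, the statement fails there. Since $F(1)=1/(1-1)=\infty$ and $R(\infty)=0$, for $n=m=1$ you get $R(F(1))=0$. On the other side, $R(1)=\tfrac12$, $R(\tfrac12)=2$, $F(2)=1$ give $F(R^{2}(1))=1$.

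In your terms, $\rho(F(1))=\rho(\infty)=1\neq 0=D(\tfrac12)=D(\rho(1))$, so $\rho\circ F=D\circ\rho$ breaks at $x=1$. The check you propose, $D(1)=1$, is also false for $D(t)=2t \bmod 1$. So your fallback of verifying the problematic points ``directly from \eqref{zagier} and \eqref{effe}'' would produce a counterexample, not a proof.

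The ``convention implicit in \eqref{rho}'' is also not well defined for rationals. The finite string $0.1^{a_0}0^{a_1}\cdots$ gives $\rho(x)$ only when completed by the infinite tail of the symbol opposite to its last block. For example, read with tail $0^\infty$, the expansion $1=[0;1]$ would give $0$ instead of $\rho(1)=\tfrac12$.

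The fix is to set $F(1)=0$, i.e.\ to use the branch $x-1$ on $[1,\infty)$. This is what \eqref{faction} gives for $1=[1]$, and what \eqref{conju2} forces. Your argument then becomes exact everywhere:
\begin{itemize}
\item $\rho$ maps $[0,\infty)$ onto $[0,1)$. Using for each $t\in[0,1)$ the binary expansion not ending in $1^\infty$ identifies $[0,1)$ with the $2$-adic integers other than $-1,-2,-3,\dots$.
\item This set is invariant under $z\mapsto z+1$ and $z\mapsto\lfloor z/2\rfloor$. On it, the paper's $D$ and $K$ are exactly these maps ($K$, on $[1-2^{1-k},1-2^{-k})$, clears the leading $k-1$ ones and sets the $k$-th digit).
\item $R=\rho^{-1}K\rho$ and $F=\rho^{-1}D\rho$ pass from the irrationals to every $x\ge 0$. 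This is because $R$, $F$ (with $F(1)=0$), $K$ and $D$ are all right-continuous, and $\rho$ is an increasing homeomorphism.
\end{itemize}
The paper's own base-case check has the same blind spot at $x=1$ and needs the same convention.
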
 
\begin{proof} For the case $n=m=1$ the proof amounts to a straightforward verification, either by direct inspection or through the action of $F$ and $R$ on c.f.e.'s, that is (\ref{faction}) and
 \be
R: [a_0; a_1, a_2, \dots] \mapsto \left\{
  \begin{array}{ll} [1;a_1-1,a_2,\dots]\; , & \;   a_0=0\\ [0.3cm]
 [0; a_0, 1, a_1-1,a_2,\dots] \; , & \; a_0>0 \end{array} \right.
 \ee
 The general case easily follows by induction.
\end{proof}

\vskip 0.2cm
\noindent
Note that the map $R$ is invertible, with inverse 
\be R^{-1}(x)=1-\frac 1x+2\left[\frac 1x\right]
\ee
On the other hand, the map $F$ is two-to-one, with 
\be
F^{-1}(x)=\left\{\frac x{x+1},x+1\right\}
\ee
In particular, the set of $F$-pre-images of  $x=\frac pq$ coincides with the set of the descendants $\{\frac p {p+q}, \frac {p+q} q\}$ considered above (cf. Section \ref{dual}). 
\noindent
Therefore, as an ordered set, the tree $\hat {\cal T}$ can be generated both `horizontally', as the set of successive $R$-images of $1$, and `vertically', as the set of successive $F$-pre-images of $1$: ${\hat {\cal T}}= \cup_{n\geq 0}R^{n}(1) = \cup_{n\geq 0}F^{-n}(1)$, and, 
more specifically,
$$
\cup_{k= 0}^{2^n-2}R^{k}(1) = \cup_{k=0}^{n-1}F^{-k}(1) \quad , \quad \forall n\geq 1.
$$

\vskip 0.2cm
\noindent
Regarding the ergodic properties of these maps, we  start observing that $F$ possesses an absolutely continuous invariant measure $\nu$ , which can be computed explicitly: first the invariance means that $\nu=\nu F^{-1}$ where the latter is the measure which assigns to each measurable set $A\subset \R_+$ the number $\nu (F^{-1}(A))$.  Second, expressing this measure as  $\nu (dx) = h(x) dx$, the invariance property translates into the following functional equation for the density $h$:
$$
h(x)=\sum_{y\in  F^{-1}(x)} \frac {h(y)}{|F'(y)|}= \frac 1 {(1+x)^{2}} h\left( \frac x {1+x}\right)+h(x+1)
$$
and one immediately checks that a continuous solution is $h(x)=1/x$. Note that $h\notin L^1(\R_+, dx)$, that is $\nu$ is an infinite $F$-invariant a.c. measure. 
On the other hand, as the function $\rho$ establishes a topological conjugacy between $R$ and the dyadic odometer $K$ (see (\ref{conju})), it provides a topological conjugacy also between $F$ and the doubling map $D:[0,1]\to [0,1]$ (as shown in \cite{BI}), i.e.
\be\label{conju2}
F = \rho^{-1} \circ D \circ \rho \quad , \quad D(x)=2x \, ({\rm mod}\, 1)
\ee
The map $D$ acts as a shift on binary expansions and preserves the Lebesgue measure on the unit interval\footnote{This in particular entails that $F$ is {\sl chaotic} : is topologically transitive, its periodic orbits are dense and has sensitive dependence on initial conditions.}. 

\noindent
Since Lebesgue measure is preserved also by the invertible map $K$, the conjugacies (\ref{conju}) and (\ref{conju2}) ensure that both $F$ and $R$ leave invariant the probability measure $d\rho$.

\noindent
On the other hand, all orbits $\{ R^i(x) : i\geq 0\}$, $x\in {\overline \R}_+$ being dense, the dynamical system $({\overline \R}_+, R)$ is uniquely ergodic and therefore $d\rho$ is its unique invariant measure. In a different guise, the map $F$ possesses several invariant measures, two of which are $d\nu$ and $d\rho$, which are of course singular with respect to one another. In particular, as the entropy of the doubling map $D$ with respect to the Lebesgue measure is $\log 2$, this same value is also the entropy of $F$ with respect to the probability measure $d\rho$, which is therefore called the {\sl measure of maximal entropy} for $F$.

\subsection{An alternative ordering}

Proposition \ref{commu} can be viewed as expressing the fact that the "horizontal" action of the map $R$ respects the order induced by the "vertical" action of the map $F$ on the tree. Moreover, the conjugation (\ref{conju2}) between $F$  and $D$ can be obtained in two steps, passing via the map $\phi$ through the orientation preserving Farey map ${\tilde H}$, so that $F=\phi^{-1}\circ {\tilde H} \circ \phi$.
We can ask whether there is an orientation reversing version of the above constructions. For instance, if we consider the standard Farey map $H$, then the map $G=\phi^{-1}\circ H \circ \phi$, given by 
\be\label{effetilde}
{G} : x\mapsto  \left\{
  \begin{array}{ll} {\displaystyle \frac{x}{1-x}}\; , & \;   0 \leq x \leq 1\\ [0.3cm]
 \displaystyle {1\over x- 1} \; , & \; x>1 \end{array} \right.
 \ee 
is conjugated via $\rho$ with the tent interval map $T$, i.e. (\ref{conju2}) is replaced by $G = \rho^{-1} \circ T \circ \rho$.
Therefore, $d\rho$  is the measure of maximal entropy for $G$ as well. In addition, one easily verifies that $G$ preserves also the a.c.  measure with density $1/(x(1+x))$. We also note that $G(\Phi)=\Phi$ where $\Phi=(\sqrt{5}+1)/2$ is the golden mean. 
Since $|G'(\Phi)|=1+\Phi$ is a repelling fixed point. 

\noindent
Now, what is the map $S:{\overline \R}_+\to {\overline \R}_+$ which plays the role of $R$ in this orientation reversing setting? A close inspection based on continued fraction expansions leads to the following expression:  
\begin{align*}
{S} : x= & [a_0;a_1, a_2, \dots]\adjustbox{trim = 0mm 0mm 2mm -1mm, clip}{$\longmapsto$}\hspace{-2mm}\adjustbox{trim = -0.5mm 0mm 2mm -1mm, clip}{$\dasharrow$}\hspace{-2.71mm}\adjustbox{trim = -0.5mm 0mm 2mm -1mm, clip}{$\dasharrow$}\\ 
%{\huge \rotatebox[origin=t]{180}{$\Lsh$}}
\adjustbox{trim = 0mm 0mm 2mm -1mm, clip}{$\dasharrow$}\hspace{-1.27mm}\longrightarrow&\left\{
  \begin{array}{ll} { \displaystyle [0;n+1, a_{n}-1, a_{n+1}, \dots]  } \; , & \;  a_0=a_1=\cdots =a_{n-1}=1, \; a_{n}>1 \\ [0.3cm]
{  \displaystyle [a_1;a_2 , a_3, \dots]  } \; , & \;  a_0=0 \\ [0.3cm]
{ \displaystyle [0; \ell+2 ]} \; , & \; x= [{\underbrace 
{1;1,\dots , 1}_{\ell-1}},2] \end{array} \right.
\end{align*}
We also set $S  (0)=\infty$, $S(\infty)=1$ and $S(\Phi)=0$.
Now note that
$$
[{\underbrace {1;1,\dots , 1}_{\ell-1}},2] = \frac{F_{\ell+2}}{F_{\ell+1}}
$$
where $F_\ell$ be the $\ell$-th Fibonacci number, given by 
$$F_{-1}=1 \; , \;  F_0=0 \quad \hbox{and} \quad F_\ell=F_{\ell-1}+F_{\ell-2}  \quad , \quad \ell\geq 1$$
We then construct the sequence $(x_k)_{k \geq 0}$ as $x_k \coloneqq  F_k/F_{k-1}$, whose first elements are
$$
x_0=0 \; , \;  x_1=\infty \; , \; x_2=1 \; , \; x_3=2 \; , \; x_4=\frac 32  \; , \; x_5=\frac 53 \; ,  \quad \cdots$$
and observe that $S$ is continuous everywhere but at the points $x_k$, $k\geq 1$, where it is right-continuous.
An alternative expression for $S$ is thus the following:
\be\label{fibo}
{S} : x\mapsto   \frac{ F_{k} x -F_{k+1} } { (kF_{k}-F_{k-1}) x-kF_{k+1}+F_{k} } \quad , \quad x\in C_k
 \ee 
where
\be
C_{2r}=[x_{2r}, x_{2r+2}) \quad  , \quad 
C_{2r+1}=[x_{2r+3}, x_{2r+1}) \quad  , \quad r\geq 0
\ee
One checks that for all $x \in \R_+$ it holds
\be
S^m\circ G^n (x) = G^n \circ S^{2^nm} (x), \quad n,m \geq 1.
\ee

%{ \displaystyle \frac{1}{k+2} } \; , & \; x= x_k \end{array} \right. 

\section{Motions on the modular surface}

$F$ can be obtained as the {\sl factor map} of a first return map for the geodesic flow on the modular surface. Let us briefly recall what does this mean. 

\noindent
Let $\HP= \set{z=x+iy\ :\ x \in \R,\ y\in \R_+}$ be the upper half-plane, viewed as a Riemmanian manifold with hyperbolic metric
$ds^2 = (dx^2 + dy^2)/y^2$. Set moreover $M= \Gamma \setminus \HP= \{ \Gamma z : z\in \HP\}$, with $\Gamma=PSL(2,\Z)$, endowed with the quotient topology.
We recall that the Fuchsian group $\Gamma$ has two generators $U$ and $V$, which can be chosen as
$U=\mat{0&1\\-1&0}$ and $V=UB^{-1}=AU=\mat{0&1\\-1&1}$.
It holds moreover $U^2=V^3=I$ (so that $\Gamma$  is not a free group). 

\vskip 0.2cm
\noindent
Let  $\varphi_t : SM\to SM$ be the geodesic flow on  the unit tangent bundle of $M$, and
let us construct a subset of $SM$ which is met infinitely many times by each $\varphi_t$-orbit. To this end set
$$
{\cal I} =\set{z=x+iy \ :\ x=0,\ y\in \R^{+}} \subset \HP
$$ 
and consider the section $C$ made by the projections on $SM$ of all vectors of $S\HP$ having base point on ${\cal I}$ and right-oriented, that is vectors of the form $v=(z,\theta )$ with $z\in {\cal I}$ and $\theta \in (\pi ,2\pi )$. 
One easily sees that the elements thus selected are all distinct. There are however $\varphi_t$-orbits which do not visit $C$ infinitely often. These are exactly the projections of geodesics which either start or end in a cusp of $PSL(2,\Z)$, that is a rational point on the real line. On $SM$ these orbits converge towards  (or come from) the cusp at infinity and for this reason they are called {\sl scattering geodesics}. They form of course a set of zero measure.  

\vskip 0.2cm
\noindent
Now, a vector $v\in S\HP$ whose projection lies in $C$ can be described by the two asymptotic coordinates  $u$ and $w$ which identify  the geodesic $\gamma (v,t)$ having tangent vector $v$ at $t=0$. Hence,
$$
C\coloneqq \set{(u,w) \ :\  u<0<  w } 
$$
In turn $C$ can be decomposed as
$C=C_1\cup C_2$ where
$$
C_1=\{(u,w)\, : \, u< 0< w <1\}\quad , \quad C_2=\{(u,w)\, : \, u< 0, \;  w >1\}
$$
The next figure\errata{Needs checking after formatting is complete} shows a geodesic $\gamma$ such that the projection on $SM$ of $\gamma \cap {\cal I}$ belongs to $C_2$.
%\captionsetup[figure]{labelsep=space}
\begin{figure}[h!]
\begin{center}
%\hspace{-0.5cm}
%\fbox{\includegraphics[width=14cm, trim = 0.1cm 0.1cm 0.41cm 1cm]{(u,w)}}
\includegraphics[width=12cm, trim = 0cm 0.1cm 0.35cm 1cm, clip]{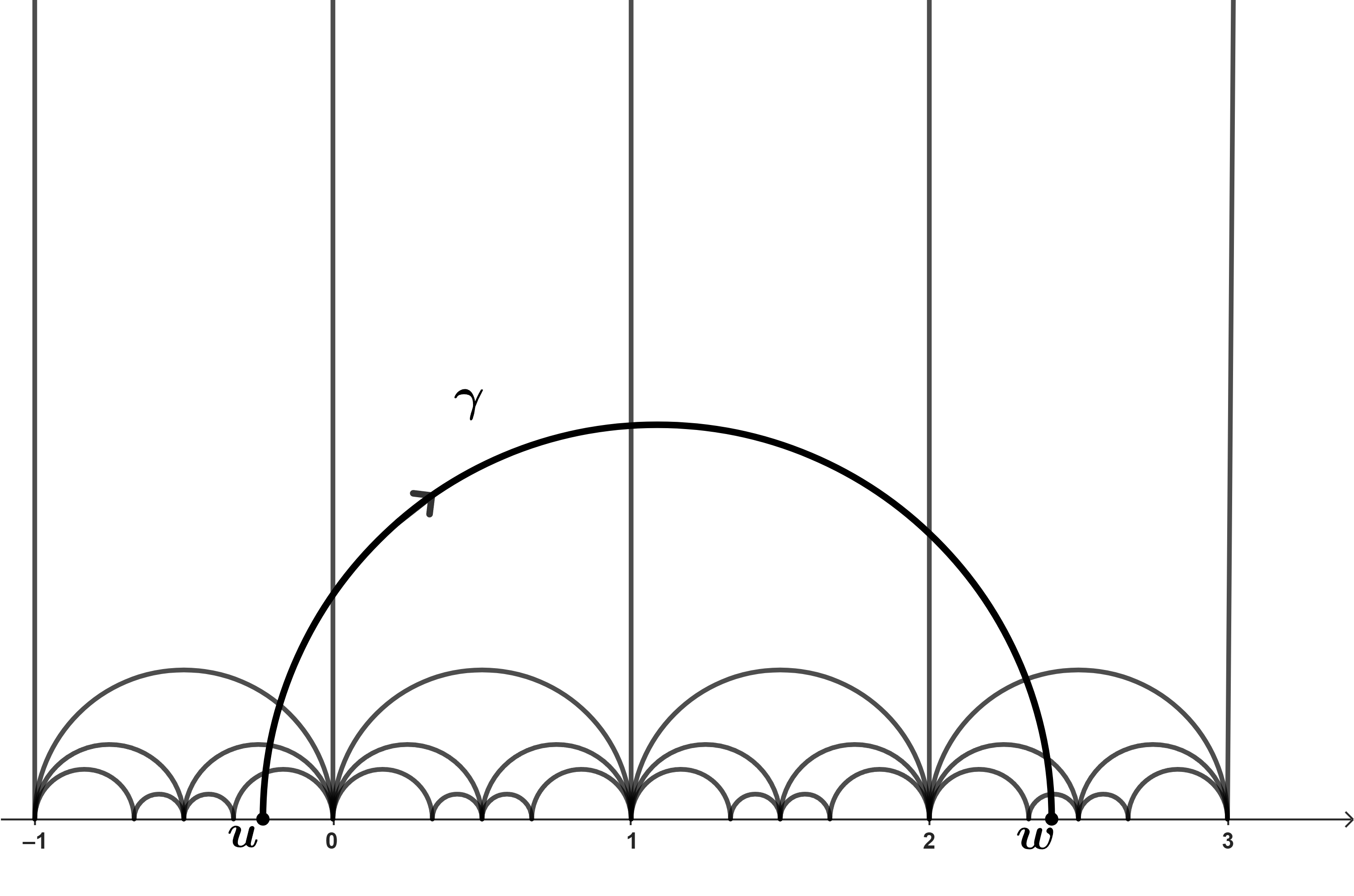}
\end{center}
\vspace{-5mm}
\caption{}
\label{fig:(u,w)}
\end{figure}
%\captionsetup[figure]{labelsep=colon}

\noindent
We now construct the {\sl first return map} $T_C:C\to C$ which sends each intersection of a $\varphi_t$-orbit with $C$ to the next one. To this end, we consider the geodesic triangle 
$\bG$ with vertices  $0$, $1$ and $\infty$, that is
$$
\bG =\{z\in \HP \,| \, 0<\Re\, z <1, |z-\frac{1}{2}|>\frac{1}{2}\}
$$
Its three sides are equivalent w.r.t. $PSL(2,\Z)$:
 $\hat {01}$ and $\hat {1\infty}$ are mapped to ${\cal I}$ by the transformations $UV^2 \equiv A^{-1}: z\to z/(1-z)$ and $UV \equiv B^{-1}: z\mapsto z-1$ respectively. Now, suppose that 
the projection of $v\in S\HP$ lies in $C$ and has coordinates $(u,w)$. There are two possibilities: if the projection of $v$ lies in $C_2$ (so that the geodesic $\gamma$ determined by $v$ leaves ${\bG}$ through $\hat {1\infty}$), then it is mapped by $B^{-1}$ to $(u-1,w-1)$; if instead the projection of $v$ lies in $C_1$ (so that $\gamma$ leaves ${\bG}$ through $\hat {01}$), then it gets mapped by $A^{-1}$ to $(\frac u {1-u}, \frac w {1-w})$.
Therefore the first return map on $C=C_1\cup C_2$ is 
\be
T_C: (u,w) \mapsto \left\{
  \begin{array}{ll}
  \left(\displaystyle \frac{u}{1-u}, \frac {w}{1-w} \right) \; , &   (u,w) \in C_1 \\ [0.3cm]
\; \,(\, u-1, w-1\,) \; \;\;, &   (u,w) \in C_2 \end{array} \right.
\ee
The action of $T_C$ on the second coordinate finally yields the {\sl factor map}  $F:\R_+\to \R_+$ given by (\ref{effe}). 
\begin{figure}[h!]
\begin{center}
%\fbox{\includegraphics[width=\textwidth]{scattering1}}
\includegraphics[width=12cm, trim = 0cm 0.1cm 0cm 0.2cm, clip]{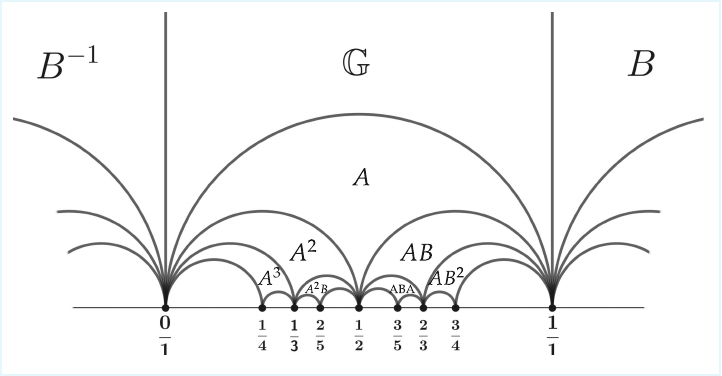}
\caption{}
\label{fig:TessellationG}
\end{center}
\end{figure}
%\begin{center}
%\includegraphics[width=14.0cm]{ft1.pdf}
%\end{center}
\vspace{-8mm} \\

\noindent
Now, referring to the figure above,\errata{Needs checking after formatting is complete} one can produce a tessellation of $\HP$  by taking all the images of the geodesic triangle $\bG$ with the isometries $A$ and $B$ (acting as M\"obius transformations). Moreover, a direct consequence of the generating rule (\ref{F}) is that, given $x=p/q$, the matrix product $X$ dealt with in Proposition \ref{codaggio}, as well as the corresponding binary sequence $\sigma (x)\in \{0,1\}^*$, are in a one-to-one correspondence with the coding w.r.t. the above tessellation of the scattering geodesic $c_{p/q}$ which converges to $p/q$, the central cusp of  the geodesic triangle $X(\bG)$ (see \cite{Kn}). \\
\begin{figure}[h!]
\begin{center}
%\fbox{\includegraphics[width=\textwidth]{scattering2}}
\includegraphics[width=12cm, trim = 0cm 0.1cm 0cm 0.2cm, clip]{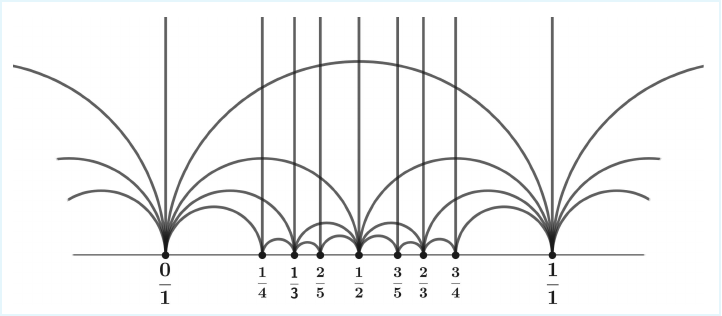}
\caption{}
\label{fig:scattering}
\end{center}
\end{figure}
%\begin{center}
%\includegraphics[width=10.0cm]{scattering}
%\end{center}
\vspace{-8mm} \\

\noindent
In a similar fashion as finite paths on $\cal T$ correspond to scattering geodesics on $\HP$, we can establish a correspondence between FC words and {\sl Ford circles}. These are a countable family of circles orthogonal to the sides of the just mentioned geodesic triangles. Each of them, denoted $C_{\frac p q}$, is tangent to $\R$ in some rational point $p/q$, and has diameter $1/{q^2}$. The largest circles have thus unit diameter and correspond to $C_n$, $n\in \Z$ (the following\errata{Needs checking after formatting is complete} picture shows $C_0$, $C_{\frac 13}$, $C_{\frac 12}$, $C_{\frac 23}$ and $C_1$).\\
\begin{figure}[h!]
\begin{center}
\includegraphics[width=11cm, trim = 0.1cm 0.2cm 0.15cm 0.1cm, clip]{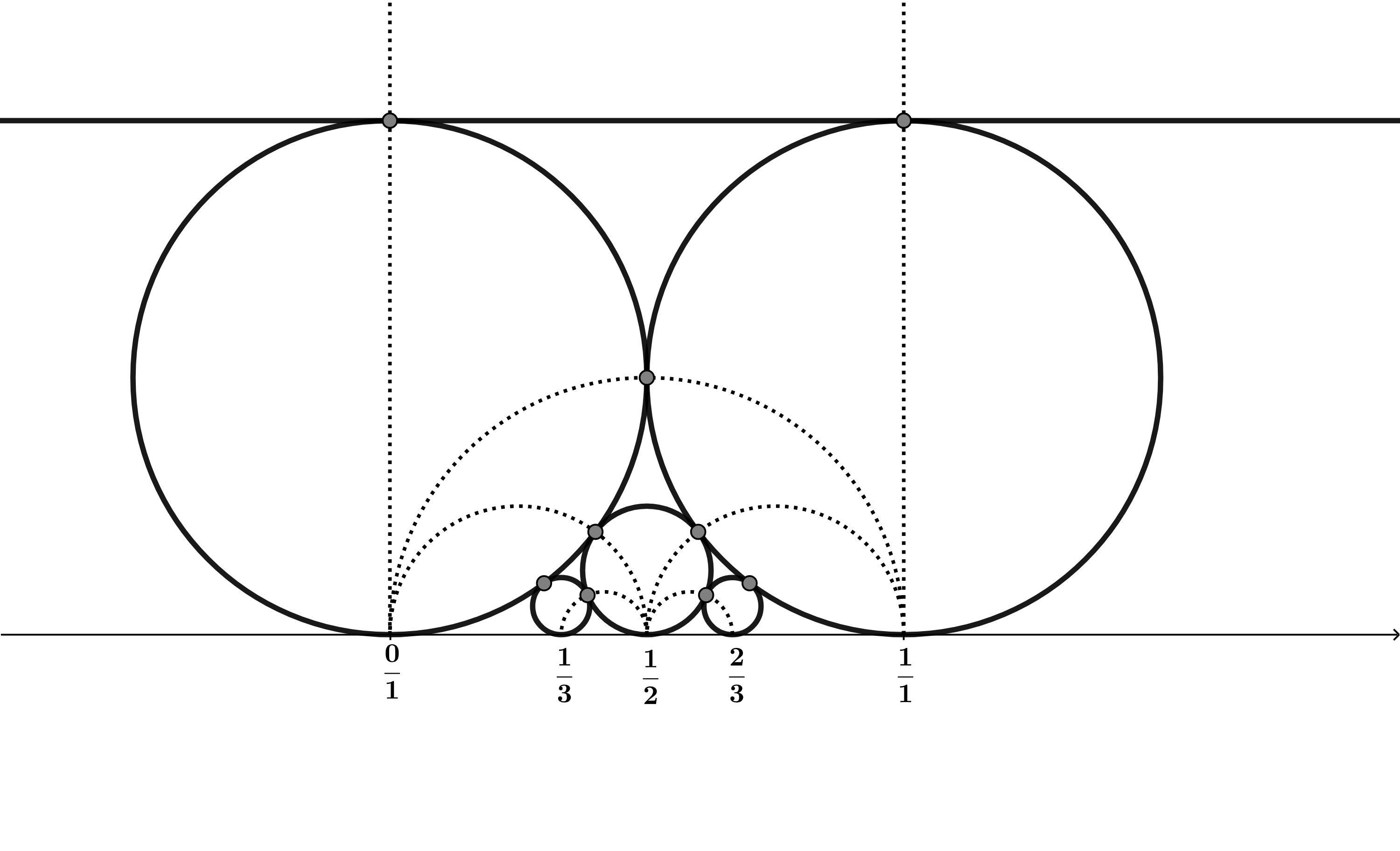}
\caption{}
\label{fig:Fordcircles}
\end{center}
\end{figure}
%\begin{center}
%\includegraphics[width=10.0cm]{ford}
%\end{center}

\newpage
\noindent
Clearly, each Ford circle $C_{\frac p q}$ with $\frac pq \geq 0$ corresponds to a unique FC word $w$ with $\frac pq = \frac {|w|_1}{|w|_0}$, and vice versa.
\vskip 0.1cm 
\noindent
Ford circles and scattering geodesics are related as follows:
\noindent
first, the image with $X_{\frac pq} =  \left(
  \begin{array}{cc}
  n & m\\
  t & s
  \end{array}\right) \in SL(2,\Z)$ of the {vertical geodesic} $I=\{z=ie^{\tau} :  \tau \in \R\}$ 
is a geodesic connecting $X_{\frac pq}(0)= \frac ms$ and $X_{\frac pq}(\infty)= \frac nt$.  
$X_{\frac pq}(\bG)$ is a Farey triangle with central cusp in $\frac pq=\frac{m+n}{s+t}$.
\noindent
 If, instead, we apply $X_{\frac pq}$ to the {positive and negative horocycles} of $v=(i,0)\in T\HP$, namely
the horizontal line $H^+=\{z=i+ \tau : \tau\in \R\}$ ($B$-invariant) and  the circle $H^-=\{z=\frac i{1+i\tau} : \tau \in \R \}$ ($A$-invariant) 
 we obtain two Ford circles: 
\begin{itemize}
\item $C_{\frac nt}$, of diameter $ \frac1{t^2}$ and tangent to $\R$ in $ \frac nt$, 
 \item $C_{\frac ms}$, of diameter $ \frac1{s^2}$ and tangent to $\R$ in $ \frac ms$,
\end{itemize}
 which touch each other at the point $X_{\frac pq}(i)$.
The ``child" circle $C_{\frac pq}$ touches the cusp at $\frac pq$,
 and the ``parents" circles $C_{\frac nt}$ and $C_{\frac ms}$ at $X_{\frac pq}B(i)$ and $X_{\frac pq}A(i)$, respectively. Finally, 
the geodesics that cross $C_{\frac pq}$ perpendicularly (in particular  $c_{{\frac pq}}$) converge at the cusp.

\vskip 0.1cm 
\noindent
{\sc Example.} $X_{\frac 12}=A= \left(
  \begin{array}{cc}
  1 & 0\\
  1& 1
  \end{array}\right)$,  $C_{\frac 12}= A^2(H^+)=AB(H^-)$ (see the figure above).\errata{Needs checking after formatting is complete}
\vskip 0.1cm 
\noindent
One easily checks that two Ford circles $C_{\frac p q}$ e $C_{\frac {p'} {q'}}$, with $\frac p q < \frac {p'} {q'}$, are either tangent to each other or they do not intersect, and the former situation occurs whenever $p'q-pq'=1$. Moreover, three Ford  circles $C_{\frac p q}$, $C_{\frac {p'} {q'}}$ and $C_{\frac {p''} {q''}}$ with $\frac p q < \frac {p''}{q''}<\frac {p'} {q'}$ are tangent to each other if and only if $\frac {p''}{q''}=\frac {p}{q}\oplus \frac {p'}{q'}$ (see, e.g., Theorems 5.6 and 5.7 in \cite{Ap}). \vspace{1mm} \\
We can say more, but first we briefly present the classical correspondence between a matrix $X \in PSL(2,\R)$ and $v=(z,\theta) \in S\HP$. Given $ v=(z,\zeta) \in S\HP$, with $z \in \HP$ and $\zeta \in T_z\HP \simeq \C$, we can identify $S\HP$ with $PSL(2,\R)$ by corresponding $v$ to the unique element $ g \in PSL(2,\R)$ such that $z=g(i)$ and $\zeta=\dd{g(\zeta_0)}=g'(z)\zeta_0$, where $\zeta_0$ is the unit vector tangent to the imaginary axis. One can also write the unit tangent vector as $\zeta = \operatorname{Im}(z)e^{i(\theta+\frac{\pi}{2})}$ where $\theta$ is the angle formed by $\zeta$ with the vertical line, measured counterclockwise. By identifying $\zeta$ with $\theta$, we obtain the parametrization $v=(z, \theta)$ for the points in $S\HP$, and
$$
(z, \theta) = \left( g(i) , \beta_g(0) \right)
$$
where $g= \begin{pmatrix} a &b \\ c &d \end{pmatrix}$ is given by
\be
\label{eq:PSL-H.2}
z = g(i) = \frac{b+ia}{d+ic} \, , \quad \theta = \beta_g(0) = -2 \arg(d+ic) = -2 \tan^{-1} \left(\frac{c}{d}\right) 
\ee
In this way, the action of the \emph{positive} and \emph{negative horocyclic flow} $h^+_t$ and $h^-_t$ on $PSL(2,\R)$ corresponds to the right multiplication by one-parameter subgroups of matrices
\be
\label{horocyclicflows}
n^+_t = \begin{pmatrix} 1 & t \\ 0 & 1 \end{pmatrix}, \, \, h^+_t \longleftrightarrow g\,n^+_t \quad \mbox{and} \quad n^-_t = \begin{pmatrix} 1 & 0 \\ t & 1 \end{pmatrix}, \, \,  h^-_t \longleftrightarrow g\,n^-_t
\ee
This also assures us of the commutativity between isometries and flows, since the former act from the left while the latter act from the right.
Finally we can say the following: consider the correspondence between an element $x \in \mathcal{T}$ and $X \in SL(2,\Z)$, given by \eqref{ide}, and the correspondence between a matrix $X \in SL(2,\Z)$, viewed as an element of $PSL(2,\R)$, and $v=(z,\theta) \in S\HP$, given by \eqref{eq:PSL-H.2}. This gives a correspondence between elements in $\mathcal{T}$ and points $z \in \HP$, as follows:
\be
\label{eq:correspondence}
x = \frac{m}{s} \oplus \frac{n}{t} \longrightarrow X  = \begin{pmatrix} n&m\\t&s \end{pmatrix} \longrightarrow v = \left( X(i) , \beta_X (i) \right)  \longrightarrow X(i)
\ee
recalling that $\beta_X (i)= -2\tan^{-1}(t/s)$. \vspace{0.5mm} \\
However, this correspondence is not a bijection since the same point in $\HP$ can be associated to multiple point in $S\HP$ and hence to multiple $X \in SL(2,\Z)$ which are not even associated to some $x \in \cal T$. But considering the direction from $x \in \mathcal{T}$ to $z \in \HP$, which is well defined, we get a correspondence between $x$ and $z=X(i)$.\\
Moreover, for our scope, we just need to prove that:
$$
 X_1 = \begin{pmatrix} n&m\\t&s \end{pmatrix} \quad \mbox{ and } \quad X_2 = \begin{pmatrix} m&-n\\s&-t \end{pmatrix}
$$ 
correspond to $v_1, v_2 \in S\HP$ with $z_1 = z_2 $ and opposite vectors $\theta_1$ and $\theta_2$. \\
But this is easily shown considering:
$$
\frac{-n +mi}{-t + si} = \frac{-n +mi}{-t + si} \cdot \frac{-i}{-i} = \frac{m +ni}{s + ti}
$$
and, recalling that $\tan^{-1}(x) + \tan^{-1}(\frac{1}{x}) = \pm \frac{\pi}{2}$, 
$$
 -2\tan^{-1}\left(\frac{t}{s}\right) + 2\tan^{-1}\left(\frac{s}{-t}\right) = -2\left(\tan^{-1} \left(\frac{t}{s}\right) + \tan^{-1} \left(\frac{s}{t}\right) \right) = \pm \pi .
$$
So, we have a direct way to determine both $x$ and $z$ from $X \in PSL(2,\Z)$, where $z$ is obtained in the canonical way, and 
\be
\label{eq:correspondence2}
x =  \frac{m}{s} \oplus \frac{n}{t} =  \frac{n}{t} \oplus \frac{m}{s} \eqcolon \frac{-n}{-t} \oplus \frac{m}{s}
\ee
\vskip 0.5cm
\noindent
{\sc Example.} As in the previous example, we have $C_\frac{1}{2} = A^2(H^+) = AB(H^-)$, which indeed is the negative horocycle for $v_1=(z_1,\theta_1)$, with $z_1 \leftrightarrow  A^2 \leftrightarrow \frac{1}{3}$ and the positive horocycle for $v_2=(z_2,\theta_2)$, with $z_2 \leftrightarrow AB \leftrightarrow \frac{2}{3}$ (see Figure \ref{fig:Fordcircles}).
\vskip 0.5cm
\noindent
With the elements presented thus far, we can show that the horizontal movement on $\cal T$ corresponds to horocyclic flows along Ford circles. To this end we present first the following.
\begin{lemma}\label{ford}The horocyclic flow with unit time on a Ford circle moves from a tangency point with another Ford circle to the next one.
\end{lemma}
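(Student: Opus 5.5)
Since isometries of $\HP$ act on the left and the horocyclic flows act on the right (cf. \eqref{horocyclicflows}), they commute. My plan is therefore to prove the statement once for a model horocycle and then transport it to every Ford circle with an element of $SL(2,\Z)$. The model will be $H^+=\{i+\tau:\tau\in\R\}$, the horizontal line $\operatorname{Im} z=1$. It is the Ford circle ``$C_{\frac 10}$'' based at $\infty$, and it is the orbit of $v=(i,0)$ under $h^+_t$, i.e. of $g=I$ under right multiplication by $n^+_t$. Then $I\, n^+_t$ has base point $n^+_t(i)=i+t$, so the flow moves along $H^+$ with unit speed in $\tau$.

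For the model step I will identify the Ford circles tangent to $H^+$. These are exactly the circles $C_n$ with $n\in\Z$: they have diameter $1$, so they touch the line $\operatorname{Im}z=1$ at the points $n+i$. This is consistent with the tangency criterion recalled above, since $\frac n1$ and $\frac 10$ satisfy $1\cdot 1-n\cdot 0=1$. The tangency points are thus $\tau\in\Z$, consecutive ones differ by exactly $1$, and indeed $B=n^+_1$. Hence the unit-time flow $g\mapsto g\,n^+_1=gB$ carries the tangency point $g(i)$ of $H^+$ with $C_n$ to $g(i+1)$, the tangency point with $C_{n+1}$.

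Now let $C_{\frac pq}$ be an arbitrary Ford circle. I will choose $X=\begin{pmatrix} n&m\\t&s\end{pmatrix}\in SL(2,\Z)$ with $X(\infty)=\frac nt=\frac pq$. From the discussion preceding the lemma, $X(H^+)=C_{\frac nt}$. Since $X$ is a Möbius isometry preserving the family of Ford circles (elements of $SL(2,\Z)$ map Ford circles to Ford circles, as the earlier identifications $X(H^\pm)=C_{\cdot}$ illustrate) and preserving tangency, it sends the tangency points $k+i$ of $H^+$ with the $C_k$ to the tangency points of $C_{\frac pq}$ with the circles $C_{X(k)}$. Because $X$ is a homeomorphism of $\R\cup\{\infty\}$, these points appear in the same cyclic order along the circle. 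Consecutive tangency points on $C_{\frac pq}$ are therefore $X(k+i)$ and $X(k+1+i)$. In flow language, starting from $v=(X B^k(i),\cdot)$, which corresponds to $XB^k$, the unit-time flow gives $XB^k n^+_1=XB^{k+1}$, whose base point is the next tangency point. For the negative horocycle one argues the same way using $H^-$, $A=n^-_1$, and the tangency points $A^k(i)$ of $H^-=C_0$ with the circles $C_{1/k}$. By \eqref{eq:correspondence2}, passing between the two descriptions amounts to replacing $X$ by $X_2$, i.e. reversing the vector.

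The step I expect to be the main obstacle is the claim that every Ford circle tangent to $C_{\frac pq}$ is the image under $X$ of some $C_k$, so that no tangency points are missed. I would settle it with the criterion quoted from \cite{Ap}. Circles tangent to $C_{\frac nt}$ correspond to fractions $\frac ab$ with $|bn-at|=1$. These form exactly the set $\{X(k)\}_{k\in\Z}=\left\{\frac{nk+m}{tk+s}\right\}$, because the solutions of $bn-at=\pm1$ are $(a,b)=\pm(m,s)+k(n,t)$. A further point needs care: the parametrization speed along the image circle must be hyperbolic arclength, not Euclidean. This is handled automatically by using the right action $g\,n^+_t$ rather than the Euclidean length.
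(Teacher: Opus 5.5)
Your proposal is correct and follows the same route as the paper: an element of $SL(2,\Z)$, acting on the left and so commuting with the right action of $n^\pm_t$, carries the model horocycles $H^+=C_{\frac 10}$ and $H^-=C_{\frac 01}$ onto an arbitrary Ford circle, and on the model horocycles the unit spacing of tangency points is checked directly. Your extra step, showing that the tangency points of $C_{\frac nt}$ are exactly the images $X(k+i)$ via the solutions $(a,b)=\pm(m,s)+k(n,t)$ of $|bn-at|=1$, fills in a point the paper leaves implicit.
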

\begin{proof}
From the content of this section, we know that the Ford circles associated with $\frac{1}{0}$ (the horizontal line) and $\frac{0}{1}$ can be mapped to any other Ford circle $C_x$ via an isometry. We can consider the Ford circle $C_x$ associated with $\frac{p}{q}$ and the tangency point with another Ford circle $C_{x'}$ associated with $\frac{p'}{q'}$. Then, both horocyclic flows, with either negative or positive unit time, are mapped to the respective flows on the Ford circles $C_\frac{1}{0}$ and $C_\frac{0}{1}$. For these, it can be directly checked that, moving with unit time (positive or negative), we are moving from the starting tangency point $z = i$ to the next one in the corresponding direction along the corresponding horocycle. This proves the lemma. 
\end{proof} 

\vskip 0.1cm 
\noindent
To state the next result, for any positive integer $t$ we set:
$$
 A^t \coloneqq  \left(
  \begin{array}{cc}
  1 & 0\\
  t& 1
  \end{array}\right) \equiv h^-_{t}  \, , \qquad
  D^t\coloneqq  B^{-t}= \left(
  \begin{array}{cc}
  1 & -t\\
  0& 1
  \end{array}\right) \equiv h^+_{t} 
 $$
so that, in particular, $A^1= h_{t=1}^- =A $ and $D\equiv D^1= h_{t=-1}^+=B^{-1}$.\\
Then, the horocyclic flows with time $t$ correspond to either $A^t$ or $B^t$, as in \eqref{horocyclicflows}. Moreover, as shown in \eqref{eq:correspondence} and \eqref{eq:correspondence2}, we recall that each fraction $x$ in $\cal T$ (and $\hat {\cal T}$) corresponds to the tangency point between the parents of the Ford circle $C_x$, and vice versa. \vspace{2mm} \\
We can now state the following:
\begin{theorem}
\label{th:cyclic}
The horizontal displacement on $\cal T$, starting at the root 1 and moving from left to right on each level, corresponds to clockwise motion along Ford circles. More precisely, assume that we reached $x = r_m$, the $m$-th element of $\cal T$, as in Theorem \ref{regola}, with $depth(x) = n$. Then, the move to the next element $y = r_{m+1}$ corresponds to the following displacement (via horocyclic flow) on Ford circles: 
\begin{itemize}

\item if $x$ is the rightmost element in a level, i.e. $m=2^n-1$, then moving to $y$ corresponds to applying $D^{n-1}A^n$ for $n$ even, and $A^{n-1}D^n$ for $n$ odd;

\item if, instead, $x$ is either the leftmost or an inner element in a level, i.e. $m=2^{n-1}+(k-1)$ for some $1\leq k < 2^{n-1}$ and $k=2^{p-1} ({\rm mod} \, 2^p)$, with $1 \leq p \leq n-2$, then moving to $y$ corresponds to applying
$A^{1+2(p-1)}$ if $n=k ({\rm mod} \, 2)$, $D^{1+2(p-1)}$ otherwise.
\end{itemize}
\end{theorem}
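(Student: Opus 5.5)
We work entirely at the level of matrices. By Proposition \ref{codaggio} each $x\in\cal T$ corresponds to the product $X=M_1\cdots M_d$ with $M_i\in\{A,B\}$ read off from $\sigma(x)$. By (\ref{eq:correspondence}) the point of $\HP$ attached to $x$ is $X(i)$, the tangency point of the two parent Ford circles $X(H^+)$ and $X(H^-)$. The horocyclic flows act from the right by $n^\pm_t$ as in (\ref{horocyclicflows}), and unit time moves between consecutive tangency points by Lemma \ref{ford}. So the claim reduces to an identity in $SL(2,\Z)$, understood in $PSL(2,\Z)$ and up to the sign and column-swap ambiguity (\ref{eq:correspondence2}): for $y=r_{m+1}$ we must show $Y\doteq X\cdot W$, where $W$ is the prescribed word in $A^t$, $D^t$.

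The first step is to locate $y$ relative to $x$. By Theorem \ref{regola}, passing from $r_m$ to $r_{m+1}$ adds $1$ to the binary path $\sigma(x)$. There are two cases.

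\emph{Inner or leftmost $x$.} We write $\sigma(x)=u\,0\,1^{j}$, so that $\sigma(y)=u\,1\,0^{j}$. Here $j$ is the number of trailing $1$'s, which is expressed through the $2$-adic valuation $p$ of $k$ in the statement, so $j=p-1$. Hence
\[
X=U A B^{j},\qquad Y=U B A^{j},
\]
and therefore $X^{-1}Y=B^{-j}A^{-1}BA^{j}$. We then compute $A^{-1}B$ and conjugate by the powers. The expected outcome is a single parabolic element $\bsm 1&0\\ t&1\esm$ or $\bsm 1&-t\\0&1\esm$ with $t=1+2j=1+2(p-1)$, once we allow right multiplication by $U$ (the swap $X\mapsto X_2$ of (\ref{eq:correspondence2}), which exchanges the roles of $H^+$ and $H^-$). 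A quick check: $B^{-j}A^{-1}BA^{j}U$ should reduce to $A^{2j+1}$ or $D^{2j+1}$. We verify $j=0$ directly and then induct on $j$, using $B^{-1}(\cdot)A$-conjugation and the relations $U^2=V^3=I$.

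The choice between $A$ and $D$ is governed by which of the two equivalent representatives (\ref{eq:correspondence2}) is the canonical one at $x$. This in turn depends on the parity of the number of runs in $\sigma(x)$, tied to the parity of $n$ versus $k$ through Proposition \ref{codaggio} (the even/odd dichotomy of the c.f.e. length). We track this parity by the same induction.

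\emph{Rightmost $x$.} Here $\sigma(x)=1^{n-1}$, so $X=B^{n-1}$, and $y=r_{2^n}$ is leftmost at the next level, so $Y=A^{n}$. Then
\[
X^{-1}Y=B^{-(n-1)}A^{n}=D^{n-1}A^n,
\]
which gives the first bullet directly up to the orientation/swap convention. Applying the swap of (\ref{eq:correspondence2}) when $n$ is odd exchanges the roles and yields $A^{n-1}D^n$. Clockwise orientation follows because each step moves rightward along $\R$ (the points $r_m$ increase within a level), and a positive flow on a Ford circle tangent below traverses it clockwise.

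The main obstacle is the bookkeeping of the $PSL$/column-swap ambiguity. The identity $X^{-1}Y=W$ holds only after choosing the right representative at each step, and matching that choice to the parity rule "$n\equiv k \pmod 2$" is where errors are likely. We would first tabulate levels $n\le4$ explicitly, for example $r_4\to r_5$ giving $A$ and $r_5\to r_6$ giving $D^{3}$ or $A^{3}$. This fixes the convention, which is then propagated by induction on $n$ using $\sigma(r_{2m})=\sigma(r_m)0$ and $\sigma(r_{2m+1})=\sigma(r_m)1$.
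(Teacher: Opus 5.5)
\paragraph{A different and cleaner route for the exponent.}
Your matrix computations are correct. Write $X=PAB^j$ and $Y=PBA^j$ (rename your prefix, since $U$ is already the generator $\bsm 0&1\\-1&0\esm$). Using $U^{-1}AU=B^{-1}$, $U^{-1}BU=A^{-1}$ and $A^{-1}BU\doteq B^{-1}$, you get $X^{-1}YU\doteq D^{2j+1}$ and also $U^{-1}X^{-1}Y\doteq A^{2j+1}$. For the rightmost node, $X^{-1}Y=D^{n-1}A^n$ and $U^{-1}X^{-1}YU=A^{n-1}D^n$. This is a real improvement on the paper's geometric step. The paper inducts on the distance $p$ to the common ancestor, then counts $2(p-1)$ tangency points on its Ford circle via Lemma \ref{ford}. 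Your binary increment $u01^j\mapsto u10^j$ plus a two-line identity gives the exponent $1+2(p-1)$ directly.

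\paragraph{The gap: choosing between $A$ and $D$.}
The walk sits at a representative $g\in\{X,XU\}$; these are the only options, since the stabilizer of $i$ in $PSL(2,\Z)$ is $\{I,U\}$. Which one it is depends on the history of the walk, not on $x$ alone. Your proposed mechanism, the parity of the number of runs of $\sigma(x)$, is false. For example, $\sigma(2/3)=01$ and $\sigma(3/2)=10$ both have two runs. Yet the walk is at $AB$ at $2/3$ (and leaves by $D^3$), and at $BAU$ at $3/2$ (and leaves by $A$). Your induction via $\sigma(r_{2m})=\sigma(r_m)0$ follows the tree, not the walk, so it cannot propagate this information either.

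\paragraph{The missing idea.}
You need the paper's ``odd number of swaps'' argument, and it follows at once from your identities once you state both of them. Since $|{\rm tr}\,X^{-1}Y|=2j+1\neq 2$, no horocyclic move takes $X$ to $Y$, or $XU$ to $YU$. So every within-level step switches the representative: $X\to YU$ by $D^{2j+1}$, and $XU\to Y$ by $A^{2j+1}$. A level transition, by contrast, keeps the representative. Level $n\ge 2$ has $2^{n-1}-1$ steps, an odd number. Hence, starting from $I$ at the root, the $k$-th node of level $n\ge2$ is reached at its canonical matrix iff $n+k$ is odd. This gives the second bullet. It also gives the transition $D^{n-1}A^n$ for $n$ \emph{odd} and $A^{n-1}D^n$ for $n$ even. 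That agrees with the paper's proof and with the table $(A),(AD^2),(D^2A^3),(A^3D^4),\dots$ in the subsequent Remark.

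\paragraph{Consequences for your parity claim.}
Your claim that the swap is applied when $n$ is odd is therefore backwards. You matched the parity printed in the first bullet, but that parity is incompatible with the second bullet for either choice of initial representative. Starting at $U$ instead of $I$ simply exchanges $A$ and $D$ in every move.

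\paragraph{Clockwise orientation.}
Do not argue this from motion to the right along $\R$, since the level transitions run leftward along $C_{\frac 10}$. It follows instead because $n^-_t$ and $n^+_{-t}$ with $t>0$ run clockwise along $H^-$ and $H^+$ at $I$. Left multiplication by $g\in PSL(2,\R)$ preserves orientation and maps horoballs to horoballs.
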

\begin{proof}
Firstly, it is important to note that when considering the horocyclic flows, each time we move from one Ford circle to another tangent to it, the vector switches direction from inward to outward, or vice versa. This means that, since the movement is clockwise, we transition from the positive horocyclic flow with negative time $h^+_{-t} \equiv D^t$ (to the left of the vector) to the negative horocyclic flow with positive time $h^-_t \equiv A^{t}$ (to the right of the vector), or vice versa, from $h^-_t \equiv A^{t}$ to $h^+_{-t} \equiv D^t$.
Since each level $n>1$ of the tree contains an even number of elements, as we move along the level, we perform an odd number of swaps between horocycles before reaching the last element $\frac{n}{1} \in \mathcal{T}$. This element corresponds to $z = (n-1) + i \in \HP$, i.e. the point of tangency between $C_\frac{1}{0}$ and $C_\frac{n-1}{1}$ (the parents of $C_\frac{n}{1}$). As a result, the vector $v_\frac{n}{1}$ will point in the opposite direction compared to $v_\frac{n-1}{1}$ w.r.t. $C_\frac{1}{0}$. Therefore, when moving from one level to the next, say from $n$ to $n+1$, we alternate between $D^{n-1}A^{n}$, when $n$ is odd, and $A^{n-1}D^{-n}$, when $n$ is even. In this way, the direction of the vector $v$ is reversed two more times, and the next level $n+1$ start from $\frac{1}{n+1}$ with the vector in the opposite direction compared to $\frac{1}{n}$. Thus, the horocyclic flow that begins at the start of a level $n$ of the tree correspond to $A$ if $n$ is odd, and to $D$ if $n$ is even.\\
Now let $x=r_m$, where $m=2^{n-1} + (k-1)$,with $ 1 \leq k \leq 2^{n-1}$, so that it is the $k$-th element of the $n$-th level of $\mathcal{T}$. If we want to move horizontally to the next element $r_{m+1}$, we have two possibilities: either $k<2^{n-1}$, in which case we move to position $k+1$ on the same level, or $r_{m+1}$ is the first element of the next level $n+1$. However, we have already discussed this case, so, from now on, we will consider $k < 2^{n-1}$.\\
If $k$ is odd, then $x$ is the left child of its parent node $x'$, and $r_{m+1}$ is the right child. In $\HP$, each of these two corresponds to the tangency points between the Ford circle $C_{x'}$ of $x'$ and the Ford circle of the other parent. Therefore, as in Lemma \ref{ford}, moving from one point to the next along $C_{x'}$ corresponds to the horocyclic flow with $|t|= 1$, which, depending on the orientation of the vector $v$, corresponds to $A$ if $n$ is odd, or $D$ if $n$ is even.\\
If, instead, $k$ is even, then we have a right child, and its parent is different from the parent of $r_{m+1}$. Indeed, we need to go back at least two levels to find a common ancestor. Considering the structure of the tree, one can see that for $k=1,2,3,\ldots,2^{n-2}, \ldots, 2^{n-1}$, the number of steps needed to reach the common ancestor is $1$, $2$, $1$, $3$, $1$, $2$, $1$, $4, \ldots, 1$, $n-1$, $1$, \ldots, $1$. In general, for $k = 2^{p-1} \pmod{2^{p}}$, for $1\leq p \leq n-2$ we need $p$ steps. This can be easily proven by induction on the level of the tree. For $n = 2$, it is trivially true. Assuming the formula holds for levels up to $n$, it follows that, by construction, for all the new left children, which correspond to $k=1 \pmod{2}=2^0 \pmod{2^1}$, the formula holds. For a given right child $x$, the common ancestor with the node directly to its right, which coincides with the common ancestor of its parent $x'$ with the node to its right, is one step further than the number of steps required from its parent $x'$. By induction, from $x'$, corresponding to $k' = 2^{p-1} \pmod{2^{p}}$, we need $p$ steps, so from $x$ we will need $p+1$. From one level to the next the nodes duplicate, and $x$ will be at the position $k = 2k'$ so that $k = 2^{p} \pmod{2^{p+1}}$, as required.\\
We have that both $r_m$ and $r_{m+1}$ correspond to points on the Ford circle associated with the (nearest) common ancestor $y \in \mathcal{T}$, specifically to the points of tangency with their respective parent. On the horocycle, between them, there are $2(p-1)$ points, where $p$ is the number of steps required to reach the common ancestor. Indeed, all the nodes traversed while moving up from $r_m$ to the ancestor form a Farey pair with $y$, as do the nodes traversed to reach down to $r_{m+1}$, and, by the properties of $\mathcal{T}$ and the Ford circles, these are all and only the points that lie between them. Thus, following the ideas in the proof of Lemma \ref{ford}, this movement corresponds to the horocyclic flow with time $|t|=1 + 2(p-1)$. The exact one, $A$ or $D$, depends on $m$, and, more directly, on $n$ and $k$. As we have seen, for $n$ even, odd $k$ corresponds to $D$ and even $k$ corresponds to $A$, while the reverse is true when $n$ is odd.
\end{proof}
\vskip 0.5cm 
\noindent
We already showed how the scattering goedesics in $\HP$ are correlated with the vertical movement on the Stern-Brocot tree $\mathcal{T}$. With this theorem, we established a parallel between Ford horocycles, which are orthogonal to the geodesics defined in the Farey tessellation, and the horizontal movement on $\mathcal{T}$.
\begin{remark}
The repeated horizontal movement on $\mathcal{T}$ can be interpreted geometrically as a cyclical movement along the upper arcs of the Ford circles and, dynamically, as a repeated composition of horocyclic flows. This corresponds to a repeated right multiplication of matrices, expressed as:
$$
\begin{aligned}
&(A) D \\
&(A D^2) A D^3 A \\
&(D^2 A^3) D A^3 D A^5 D A^3 D \\
&(A^3 D^4) A D^3 A D^5 A D^3 A D^7 A D^3 A D^5 A D^3 A \\
&(D^4 A^5) \ldots
\end{aligned}
$$
where the brackets correspond to the jump to the next level on $\mathcal{T}$, or equivalently, to the return to $i$ in $\HP$ and subsequent descent towards $X_\frac{1}{n+1} (i) \leftrightarrow \frac{1}{n+1}$.
\end{remark}
\begin{remark}
If one want to consider the horizontal movement on the $n$-th level of $\mathcal{T}$ as composition of horocyclic flows but always resetting and starting from $(i,0) \in S\HP$, we would have
\begin{gather*}
(I_2)
\\ (A) D
\\ (A^2) D A^3 D
\\ (A^3) D A^3 D A^5 D A^3 D;
 \\(A^4) D A^3 D A^5 D A^3 D A^7 D A^3 D A^5 D A^3 D
\\ \vdots
\end{gather*}
which more clearly show the palindromic and symmetric nature of the movement along a level of $\mathcal{T}$, obviously already present in Theorem \ref{th:cyclic}.
\end{remark}
To conclude, we provide figures to visualize the motions described in Theorem \ref{th:cyclic}. In the first figure, we indicate the direction of traversal of the circles, which will be omitted in the subsequent figures, as it remains the same, i.e., clockwise. Additionally, clockwise is considered the negative direction along the horizontal line $C_\frac{1}{0}$.  After the first two figures we will omit vectors and points to reduce clutter. Moreover, in all figures, we color-code the horocyclic flows: red (\textcolor{red}{$h^-_t$}) for the negative horocycle $H^-$, associated with positive time , and blue (\textcolor{blue}{$h^+_{-t}$}) for the positive horocycle $H^+$, associated with negative time\footnote{Cf. the correspondence \eqref{horocyclicflows}.}. Specifically, red represents $A^t$, and blue represents $D^t$, where $t-1$ denotes the number of tangent points that must be surpassed to reach the end of the arc.
A note is due: in the figures showing the movement on the $n$-th level, we have added, for completeness, the descent from $\frac{1}{1}$ to the first element of the $n$-th level, which would not be included in the movement through the level. Visually, it correspond to the leftmost colored arc, descending from $i$ along $C_\frac{0}{1}$. \newpage
\begin{figure}[h]
 \centering
   \begin{minipage}{0.50\textwidth}
    \centering
\includegraphics[width=6.5cm, trim =2.6mm 0.5mm 4mm 1.5mm, clip]{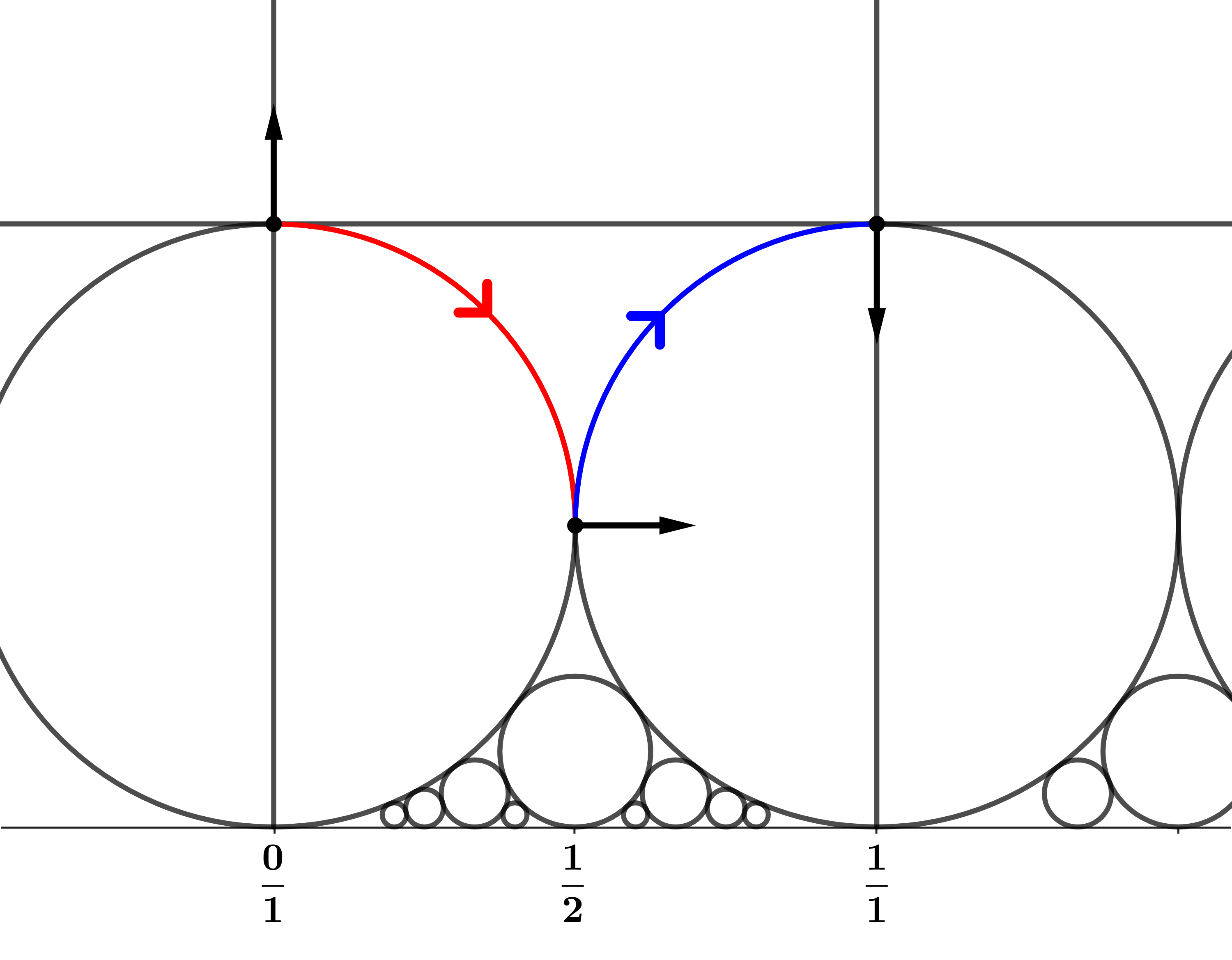}
    \end{minipage}\hfill
    \begin{minipage}{0.50\textwidth}
     \centering
\includegraphics[width=6.5cm,  trim = 2.6mm 0.5mm 4mm 1.5mm, clip]{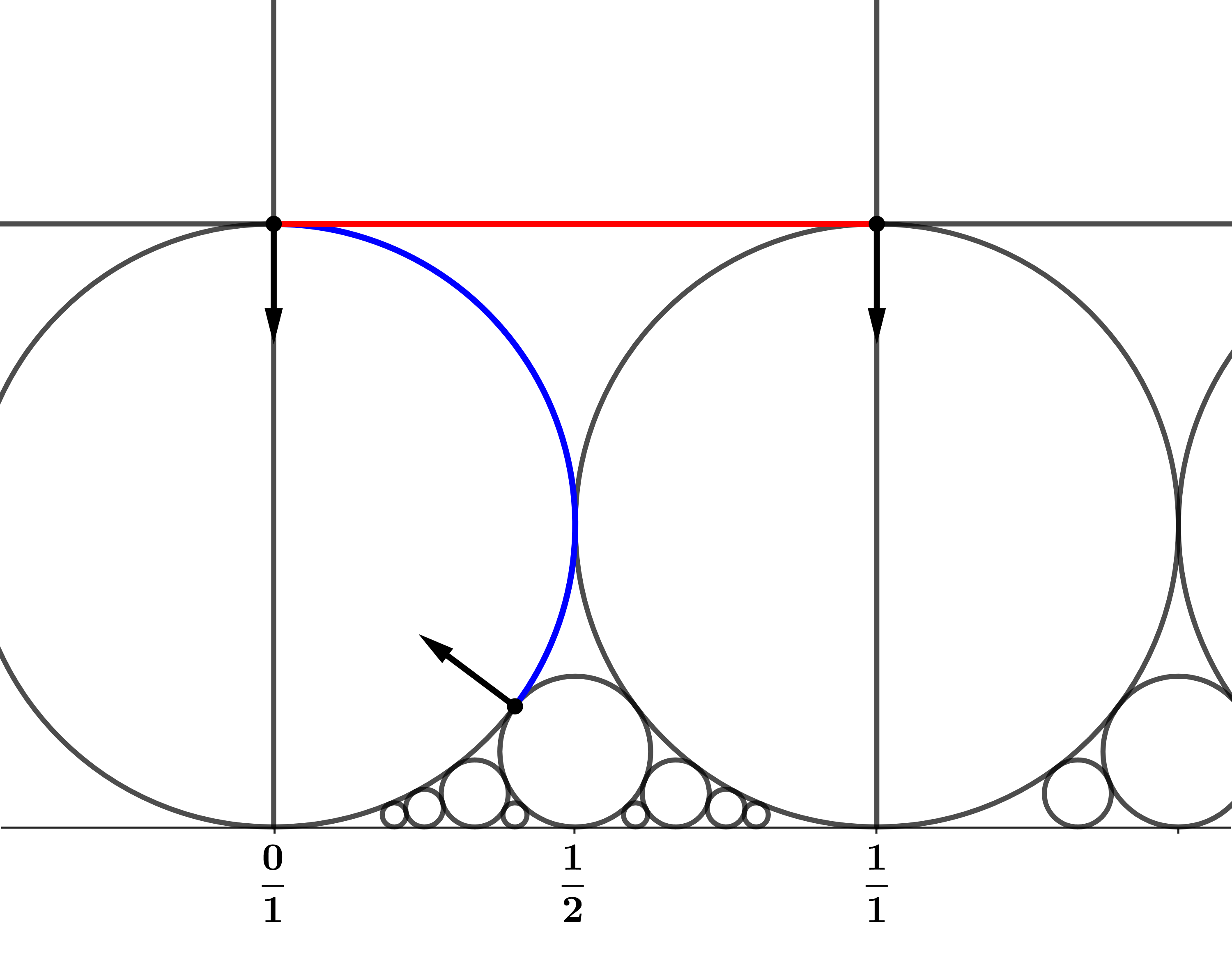}
   \end{minipage}
\label{fig:Level1}
\caption{On the left, movement on the second level of $\mathcal{T}$ with [\textcolor{red}{$h^-_1$}] (the descent) followed by \textcolor{blue}{$h^+_{-1}$}. On the right, transition to the third level with \textcolor{red}{$h^-_1$} followed by \textcolor{blue}{$h^+_{-2}$}}
%\caption{On the left, movement on the second level of $\mathcal{T}$ with $n^-_1$ (red) followed by $n^+_{-1}$ (blue). On the right, transition to the third level with $n^-_1$ (red) followed by $n^+_{-2}$ (blue) }
\end{figure}
\begin{figure}[h!]
\begin{center}
\includegraphics[width=12.5cm, trim = 0.1mm 2mm 0.4mm 3.5mm, clip]{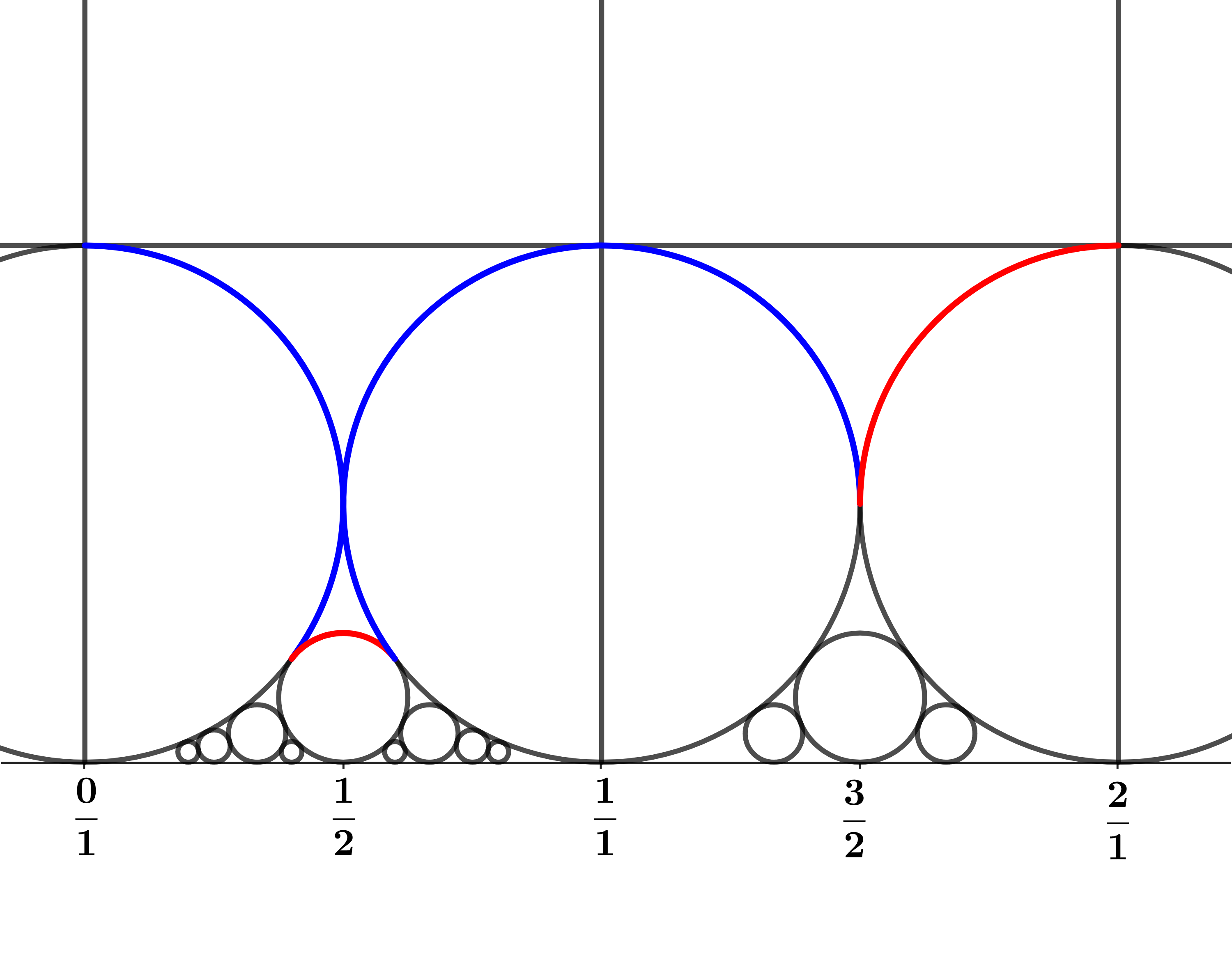}
\end{center}
\caption{Movement on the third level with [\textcolor{blue}{$h^+_2$}] (the descent) followed by \textcolor{red}{$h^-_1$}, then \textcolor{blue}{$h^+_2$} and lastly \textcolor{red}{$h^-_1$}}
%\caption{Movement on the third level with [$n^+_2$ (blue)] followed by $n^-_1$ (red), then $n^+_2$ and lastly $n^-_1$}
\label{fig:Livello2}
\end{figure}
\newpage
\begin{figure}[ht]
\begin{center}
\includegraphics[width=13cm, trim =0.1mm 2mm 0.4mm 3.5mm, clip]{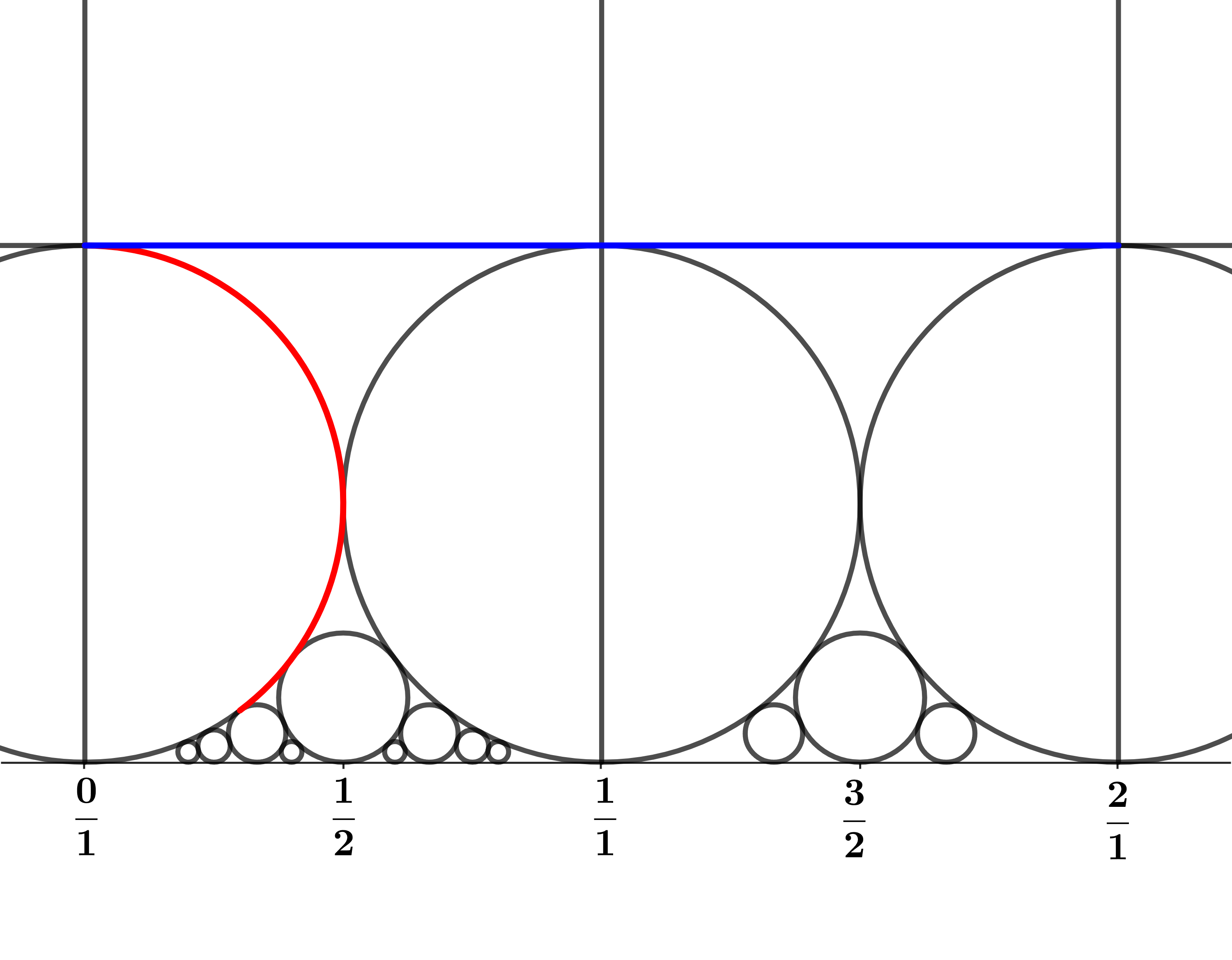}
\end{center}
\caption{Transition to the fourth level with \textcolor{blue}{$h^+_{-2}$} followed by \textcolor{red}{$h^-_3$}}
%\caption{Transition to the fourth level with $n^+_{-2}$ (blue) followed by $n^-_3$ (red)}
\label{fig:Acapo2}
\end{figure}
\begin{figure}[h!]
\begin{center}
\includegraphics[width=\textwidth, trim =2mm 3mm 1mm 4mm, clip]{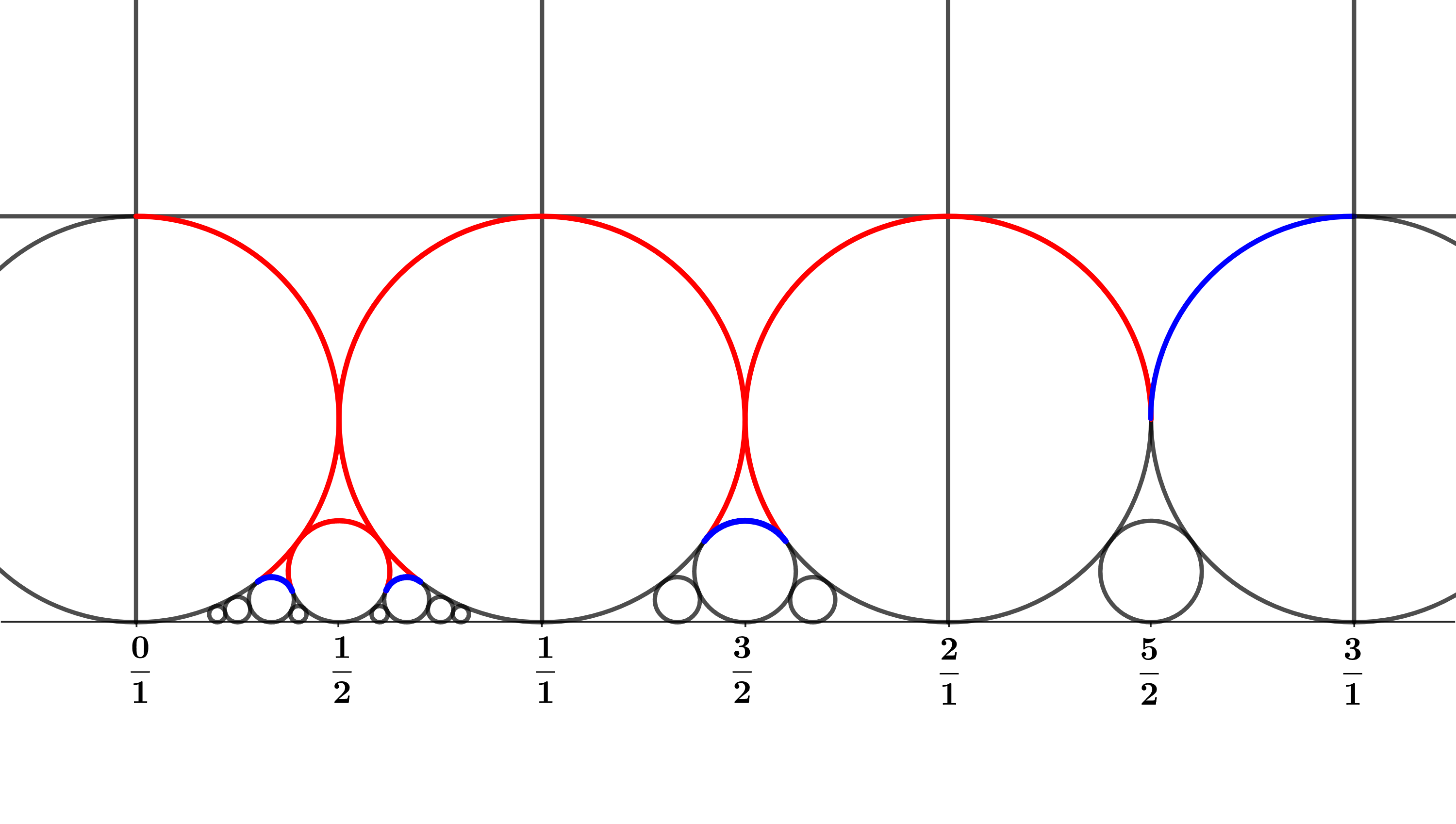}
\end{center}
\caption{Movement on the fourth level with [\textcolor{red}{$h^-_3$}] (the descent) followed by \textcolor{blue}{$h^+_{-1}$}, then \textcolor{red}{$h^-_3$}, then \textcolor{blue}{$h^+_{-1}$}, then \textcolor{red}{$h^-_5$}, then \textcolor{blue}{$h^+_{-1}$}, then \textcolor{red}{$h^-_3$}, and lastly \textcolor{blue}{$h^+_{-1}$}}
%\caption{Movement on the fourth level with [$n^-_3$ (red)] followed by $n^+_{-1}$ (blue), then $n^-_3$, then $n^+_{-1}$, then $n^-_5$, then $n^+_{-1}$, then $n^-_3$, and lastly $n^+_{-1}$}
\label{fig:Livello3}
\end{figure}

\end{document}